\title{Adaptive Localized Cayley Parametrization\\ for Optimization over Stiefel Manifold}
\author{Keita Kume}
\email{kume@sp.ce.titech.ac.jp}
\author{Isao Yamada}
\email{isao@sp.ce.titech.ac.jp}
\address{Dept. of Information and Communications Engineering, Tokyo Institute of Technology, Tokyo, Japan}
\def\NAT@def@citea{\def\@citea{\NAT@separator}}
\apptocmd{\lim}{\limits}{}{}
\apptocmd{\liminf}{\limits}{}{}
\newcommand{\argmin}{\mathop{\mathrm{argmin}}\limits}
\newcommand{\inprod}[2]{{\langle #1,#2 \rangle}}
\newcommand{\trace}{{\rm Tr}}
\newcommand{\St}{{\rm St}}
\newcommand{\TT}{\mathsf{T}}
\newcommand{\diag}{\mathrm{diag}}
\newcommand{\Skew}{\mathop{\mathrm{S_{kew}}}}
\newcommand{\dbra}[1]{\llbracket #1 \rrbracket}
\newcommand{\doublewidetilde}[1]{{%
  \mathpalette\double@widetilde{#1}%
}}
\newcommand{\double@widetilde}[2]{%
  \sbox\z@{$\m@th#1\widetilde{#2}$}%
  \ht\z@=.9\ht\z@
  \widetilde{\box\z@}%
}
\theoremstyle{plain}
\newtheorem{theorem}{Theorem}[section]
\newtheorem{lemma}[theorem]{Lemma}
\theoremstyle{definition}
\newtheorem{definition}[theorem]{Definition}
\newtheorem{example}[theorem]{Example}
\newtheorem{problem}[theorem]{Problem}
\theoremstyle{remark}
\newtheorem{remark}[theorem]{Remark}
\newtheorem{fact}[theorem]{Fact}
\DeclarePairedDelimiter{\abs}{\lvert}{\rvert}
\begin{document}

\maketitle
\begin{abstract}
	We present an adaptive parametrization strategy for optimization problems over the Stiefel manifold by using generalized Cayley transforms to utilize powerful Euclidean optimization algorithms efficiently.
	The generalized Cayley transform can translate an open dense subset of the Stiefel manifold into a vector space, and the open dense subset is determined according to a tunable parameter called a center point.
	With the generalized Cayley transform, we recently proposed the naive Cayley parametrization, which reformulates the optimization problem over the Stiefel manifold as that over the vector space.
	Although this reformulation enables us to transplant powerful Euclidean optimization algorithms, their convergences may become slow by a poor choice of center points.
	To avoid such a slow convergence, in this paper, we propose to estimate adaptively 'good' center points so that the reformulated problem can be solved faster.
	We also present a unified convergence analysis, regarding the gradient, in cases where fairly standard Euclidean optimization algorithms are employed in the proposed adaptive parametrization strategy.
	Numerical experiments demonstrate that (i) the proposed strategy succeeds in escaping from the slow convergence observed in the naive Cayley parametrization strategy; (ii) the proposed strategy outperforms the standard strategy which employs a retraction.
\end{abstract}

\noindent{\bf Keywords}:	Stiefel manifold, orthogonality constraints, nonconvex optimization, Cayley parametrization, Cayley transform

\noeqref{eq:ICT_O,eq:ICT_O,eq:Cayley,eq:Cayley_inv_origin}

\section{Introduction}
The Stiefel manifold
$\St(p,N):= \{\bm{U} \in \mathbb{R}^{N\times p} \mid \bm{U}^{\TT}\bm{U} = \bm{I}_{p}\}$
is defined for
$(p,N) \in \mathbb{N} \times \mathbb{N}$
with
$p \leq N$,
where
$\bm{I}_{p} \in \mathbb{R}^{p\times p}$
is the
$p\times p$
identity matrix.
In this paper, we consider optimization problems with orthogonality constraints formulated as follows:
\begin{problem}\label{problem:origin}
	Let
	$f:\mathbb{R}^{N\times p }\to \mathbb{R}$,
	be a differentiable function, and let its gradient
	$\nabla f:\mathbb{R}^{N\times p}\to \mathbb{R}^{N\times p}$
	Lipschitz continuous over
	$\St(p,N)(\subset \mathbb{R}^{N\times p})$.
	Then
	\begin{equation}
		\textrm{find} \  \bm{U}^{\star} \in \argmin_{\bm{U}\in \St(p,N)}f(\bm{U}), \label{eq:cost}
	\end{equation}
	where the existence of such a minimizer
	$\bm{U}^{\star}$
	is guaranteed automatically by the compactness of
	$\St(p,N) (\subset \mathbb{R}^{N\times p})$
	and the continuity of
	$f$
	over the $Np$-dimensional Euclidean space
	$\mathbb{R}^{N\times p}$.
\end{problem}
Especially for
$p \ll N$,
Problem~\ref{problem:origin} often arises in data science, including signal processing and machine learning.
These applications include, e.g., nearest low-rank correlation matrix problem~\cite{Pietersz-Groenen04,Grubisic-Pietersz07,Zhu15}, nonlinear eigenvalue problem~\cite{Bai-Sleijpen-Vorst-Lippert-Edelman98,Yang-Meza-Wang06,Zhao-Bai-Jin15}, joint diagonalization problem for independent component analysis~\cite{Joho-Mathis02,Theis-Cason-Absil09,Sato17,Nikpour02}, orthogonal Procrustes problem~\cite{Elden-Park99,Francisco-Viloche-Weber2017,Zhao-Wang-Nie16,Zhang-Yang-Shen-Ying20} and enhancement of the generalization performance in deep neural network~\cite{Helfrich-Willmott-Ye18,Bansal-Chen-Wang18}.
However, Problem~\ref{problem:origin} has inherent difficulties regarding the severe nonlinearity of
$\St(p,N)$.

{\it A Cayley parametrization (CP) strategy}~\cite{Yamada-Ezaki03,Fraikin-Huper-Dooren07,Hori-Tanaka10,Helfrich-Willmott-Ye18,Kume-Yamada19,Kume-Yamada20,Kume-Yamada21,Kume-Yamada22} resolves the nonlinearity of
$\St(p,N)$
in Problem~\ref{problem:origin} in terms of a Euclidean space with {\it a Cayley transform}.
The classical Cayley transform is defined for parametrization of the special orthogonal group
${\rm SO}(N):= \{\bm{U} \in {\rm O}(N):=\St(N,N) \mid \det(\bm{U}) = 1\}$
as
\begin{equation}
	\varphi:{\rm SO}(N) \setminus E_{N,N} \to Q_{N,N}:\bm{U} \mapsto (\bm{I}-\bm{U})(\bm{I}+\bm{U})^{-1}, \label{eq:CT_O}
\end{equation}
and its inversion is given by
\begin{equation}
	\varphi^{-1}:Q_{N,N}\to {\rm SO}(N) \setminus E_{N,N}:\bm{V}\mapsto (\bm{I}-\bm{V})(\bm{I}+\bm{V})^{-1}, \label{eq:ICT_O}
\end{equation}
where
$E_{N,N} := \{\bm{U} \in {\rm O}(N) \mid \det(\bm{I} + \bm{U}) = 0\}$
is called the singular-point set of
$\varphi$,
and
$Q_{N,N}:= \{\bm{V} \in \mathbb{R}^{N\times N} \mid \bm{V}^{\TT} = -\bm{V}\}$
is the set of all skew-symmetric matrices.
Since
$Q_{N,N}$
is clearly a vector space over
$\mathbb{R}$,
and
${\rm SO}(N) \setminus E_{N,N}$
is an open dense subset of
${\rm SO}(N)$~\cite{Yamada-Ezaki03},
this dense subset can be parameterized in terms of the vector space
$Q_{N,N}$
with
$\varphi^{-1}$.
For
$\St(p,N)$
with
$p<N$,
the Cayley transform-pair
$(\varphi,\varphi^{-1})$
has been extended to
$\Phi_{\bm{S}}:\St(p,N)\setminus E_{N,p}(\bm{S})\to Q_{N,p}(\bm{S})$
and
$\Phi_{\bm{S}}^{-1}:Q_{N,p}(\bm{S}) \to \St(p,N)\setminus E_{N,p}(\bm{S})$
with a given
$\bm{S} \in {\rm O}(N)$~\cite{Kume-Yamada22},
where
$E_{N,p}(\bm{S}) \left(\subset \St(p,N)\right)$
is a generalization of
$E_{N,N}$
for
$\Phi_{\bm{S}}$
and
\begin{equation} \label{eq:skew}
	Q_{N,p}(\bm{S}) := \left\{
	\left.\begin{bmatrix} \bm{A} & -\bm{B}^{\TT} \\ \bm{B} & \bm{0} \end{bmatrix}
	\right\vert \; \substack{ -\bm{A}^{\TT} = \bm{A} \in \mathbb{R}^{p\times p},\\ \bm{B} \in \mathbb{R}^{(N-p)\times p}}\right\}=:Q_{N,p} \subset Q_{N,N}
\end{equation}
is an
$Np-\frac{1}{2}p(p+1)$-dimensional vector space of skew-symmetric matrices (see~\eqref{eq:Cayley} and~\eqref{eq:Cayley_inv_origin} in Section~\ref{sec:Cayley}).
Although, for each
$\bm{S} \in {\rm O}(N)$,
$Q_{N,p}(\bm{S})$
is nothing but the common set
$Q_{N,p}$,
we distinguish them as the domain of parametrization
$\Phi_{\bm{S}}^{-1}$
of the particular subset
$\St(p,N)\setminus E_{N,p}(\bm{S})$.

Since Problem~\ref{problem:origin} is a nonconvex optimization problem in general, a realistic goal for Problem~\ref{problem:origin} has been to find a stationary point
$\bm{U}^{\star} \in \St(p,N)$
of
$f$
over
$\St(p,N)$~\cite{Absil-Mahony-Sepulchre08,Wen-Yin13,Gao-Liu-Chen-Yuan18}.
The problem to find a stationary point of Problem~\ref{problem:origin} can be restated as 
\begin{problem} \label{problem:ALCP_grad}
	Let
	$f:\mathbb{R}^{N\times p }\to \mathbb{R}$,
	be a differentiable function, and let its gradient
	$\nabla f:\mathbb{R}^{N\times p}\to \mathbb{R}^{N\times p}$
	Lipschitz continuous over
	$\St(p,N)(\subset \mathbb{R}^{N\times p})$.
	Then
	\begin{equation}\label{eq:problem_alcp_grad}
		\textrm{find} \ (\bm{V}^{\star},\bm{S}^{\star}) \in  Q_{N,p}(\bm{S}^{\star}) \times {\rm O}(N) \ \textrm{such that} \  \|\nabla (f\circ\Phi_{\bm{S}^{\star}}^{-1})(\bm{V}^{\star})\|_{F} = 0,
	\end{equation}
	where
	$\nabla (f\circ\Phi_{\bm{S^{\star}}}^{-1})$
	is the gradient of
	$f\circ\Phi_{\bm{S^{\star}}}^{-1}$
	under the standard inner product
	$ \inprod{\bm{V}_{1}}{\bm{V}_{2}} := \trace(\bm{V}_{1}^{\TT}\bm{V}_{2})\ (\bm{V}_{1},\bm{V}_{2} \in Q_{N,p}(\bm{S^{\star}}))$
	(see Fact~\ref{fact:gradient} for an explicit formula of
	$\nabla (f\circ \Phi_{\bm{S^{\star}}}^{-1})$).
	The existence of such a solution
	$(\bm{V}^{\star},\bm{S}^{\star})$
	is guaranteed automatically because, for a minimizer
	$\bm{U}^{\star}$
	in Problem~\ref{problem:origin},
	(i)
	$\bm{U}^{\star}$
	is a stationary point of Problem~\ref{problem:origin} (see Remark~\ref{remark:stationary}~(a));
	(ii) there exists
	$\bm{S}^{\star} \in {\rm O}(N)$
	such that
	$\bm{U}^{\star} \in \St(p,N)\setminus E_{N,p}(\bm{S}^{\star})$
	(see Fact~\ref{fact:center_point});
	(iii)
	$\Phi_{\bm{S}^{\star}}(\bm{U}^{\star}) \in Q_{N,p}(\bm{S}^{\star})$
	is a stationary point of
	$f\circ\Phi_{\bm{S}^{\star}}^{-1}$ (see Remark~\ref{remark:stationary}~(b)).
\end{problem}

\begin{remark}[Stationary point of Problem~\ref{problem:origin}] \label{remark:stationary}
	\mbox{}
	\begin{enumerate}[label=(\alph*)]
		\item
			$\bm{U}^{\star} \in \St(p,N)$
			is said to be a stationary point of Problem~\ref{problem:origin} (see, e.g.,~\cite[Definition~2.1, Remark~2.3]{Gao-Liu-Chen-Yuan18} and~\cite[Lemma~1]{Wen-Yin13}) if
			$\bm{U}^{\star}$
			satisfies
			\begin{equation}
				\begin{cases}
					(\bm{I}-\bm{U}^{\star}\bm{U}^{\star\TT})\nabla f(\bm{U}^{\star}) & = \bm{0} \\
					\bm{U}^{\star\TT}\nabla f(\bm{U}^{\star}) -\nabla f(\bm{U}^{\star})^{\TT}\bm{U}^{\star} & = \bm{0}.
				\end{cases}\label{eq:optimality}
			\end{equation}
			Every minimizer of Problem~\ref{problem:origin} is a stationary point of Problem~\ref{problem:origin}~\cite{Wen-Yin13,Gao-Liu-Chen-Yuan18}.
		\item
			Let
			$\bm{U}^{\star} \in \St(p,N)$
			and
			$\bm{S}^{\star} \in {\rm O}(N)$
			satisfy
			$\bm{U}^{\star} \in \St(p,N) \setminus E_{N,p}(\bm{S}^{\star})$.
			Then,
			$\bm{U}^{\star}$
			is a stationary point of Problem~\ref{problem:origin} if and only if
			$\Phi_{\bm{S}^{\star}}(\bm{U^{\star}}) \in Q_{N,p}(\bm{S}^{\star})$
			is a stationary point of
			$f\circ\Phi_{\bm{S^{\star}}}^{-1}$
			over
			$Q_{N,p}(\bm{S^{\star}})$~\cite{Kume-Yamada22},
			i.e.,
			$\nabla (f\circ \Phi_{\bm{S^{\star}}}^{-1})(\Phi_{\bm{S^{\star}}}(\bm{U^{\star}})) = \bm{0}$.
	\end{enumerate}
\end{remark}

To find an approximate solution of Problem~\ref{problem:ALCP_grad}, {\it a naive Cayley parametrization (CP) strategy}~\cite{Yamada-Ezaki03,Helfrich-Willmott-Ye18,Kume-Yamada22} considers the following restricted version of Problem~\ref{problem:ALCP_grad} by fixing some
$\bm{S} \in {\rm O}(N)$
as
\begin{problem} \label{problem:CP_St}
	Let
	$f:\mathbb{R}^{N\times p }\to \mathbb{R}$,
	be a differentiable function, and let its gradient
	$\nabla f:\mathbb{R}^{N\times p}\to \mathbb{R}^{N\times p}$
	Lipschitz continuous over
	$\St(p,N)(\subset \mathbb{R}^{N\times p})$.
	For arbitrarily chosen
	$\bm{S} \in {\rm O}(N)$
	and
	$\epsilon > 0$,
	then
	\begin{equation}
		\textrm{find} \ \bm{V}^{\diamond} \in  Q_{N,p}(\bm{S})  \ \textrm{such that} \  \|\nabla (f\circ\Phi_{\bm{S}}^{-1})(\bm{V}^{\diamond})\|_{F} < \epsilon,
	\end{equation}
	where the existence of such an approximate stationary point
	$\bm{V}^{\diamond}$
	is guaranteed for every
	$\epsilon > 0$
	because of
	$\inf_{\bm{V}\in Q_{N,p}(\bm{S})} \|\nabla (f\circ\Phi_{\bm{S}}^{-1})(\bm{V})\|_{F} = 0$
	due to the denseness of
	$\St(p,N)\setminus E_{N,p}(\bm{S})$
	in
	$\St(p,N)$~\cite{Kume-Yamada22}
	(Note: the existence of a stationary point
	$\bm{V}^{\star} \in Q_{N,p}(\bm{S})$
	of
	$f\circ\Phi_{\bm{S}}^{-1}$
	is not guaranteed in general because the stationary point
	$\bm{U}^{\star}$
	of Problem~\ref{problem:origin} may belong to
	$E_{N,p}(\bm{S})$
	for the chosen
	$\bm{S}$).
\end{problem}
Since Problem~\ref{problem:CP_St} is defined over Euclidean space
$Q_{N,p}(\bm{S})$,
many powerful Euclidean optimization algorithms can be employed for Problem~\ref{problem:CP_St}, e.g., the gradient descent method~\cite{Nocedal-Wright06}, the Newton method~\cite{Nocedal-Wright06}, the quasi-Newton method~\cite{Nocedal-Wright06,Li-Fukushima01}, the conjugate gradient method~\cite{Andrei20,Gilbert-Nocedal92,Al-baali85,Dai-Yuan99,Dai-etal00,Dai-Yuan01,Hager-Zhang05}, the three-term conjugate gradient method~\cite{Zhang-Zhou-Li07,Narushima-Yabe-Ford11,Khoshsimaye-Ashrafi23}, and the Nesterov-type accelerated gradient method~\cite{Nesterov83,Ghadimi-Lan16,Carmon-Duchi-Hinder-Sidford18,Zeyuan18,Diakonikolas-Jordan21}.

However, in the above {\it naive CP strategy}, there is a risk of performance degradation in a case where a solution
$\bm{U}^{\star}$
to Problem~\ref{problem:origin} is close to the singular-point set
$E_{N,p}(\bm{S})$~\cite{Yamada-Ezaki03,Kume-Yamada22}.
This performance degradation, called {\it a singular-point issue} in this paper, can be explained intuitively that every singular point corresponds to a point at infinity in
$Q_{N,p}(\bm{S})$
via
$\Phi_{\bm{S}}^{-1}$.
To explain more precisely, consider the case where an estimate
$\bm{V}_{n}\in Q_{N,p}(\bm{S})$
for a solution of Problem~\ref{problem:CP_St} is far away from zero.
Then, even if
$\bm{V}_{n}$
is greatly updated to
$\bm{V}_{n+1} \in Q_{N,p}(\bm{S})$,
the corresponding updating from
$\bm{U}_{n}:=\Phi_{\bm{S}}^{-1}(\bm{V}_{n}) \in \St(p,N)$
to
$\bm{U}_{n+1}:=\Phi_{\bm{S}}^{-1}(\bm{V}_{n+1}) \in \St(p,N)$
can become tiny (see just after Fact~\ref{fact:mobility}).
To avoid such a singular-point issue, we are desired to design a good
$\bm{S} \in {\rm O}(N)$
such that
$\Phi_{\bm{S}}(\bm{U}^{\star}) \in Q_{N,p}(\bm{S})$
is located not distant from zero,
which does not seem to be possible before solving Problem~\ref{problem:CP_St}.

In this paper, to mitigate the singular-point issue in the naive CP strategy, we consider the estimation of good
$\bm{S}^{\star} \in {\rm O}(N)$
in addition to
$\bm{V}^{\star}$
in Problem~\ref{problem:ALCP_grad}.
To solve Problem~\ref{problem:ALCP_grad}, we propose a modified version of the naive CP strategy, named
{\it an Adaptive Localized Cayley Parametrization (ALCP) strategy} in Algorithm~\ref{alg:ALCP}, with a scheme to change
$\bm{S} \in {\rm O}(N)$
adaptively in such a case where a risk of the singular-point issue is detected.
While keeping the same
$\bm{S} \in {\rm O}(N)$,
the ALCP strategy performs the same processing as the naive CP strategy.
Just after the detection of the risk of the singular-point issue, we change the center point
$\bm{S} \in {\rm O}(N)$
to
$\bm{S}' \in {\rm O}(N)$
so that a reparameterized estimate
$\bm{V}_{n+1}:= \Phi_{\bm{S}'}\circ\Phi_{\bm{S}}^{-1}(\bm{\widetilde{V}}_{n+1}) \in Q_{N,p}(\bm{S}')$
stays not distant from zero, where
$\bm{\widetilde{V}}_{n+1}\in Q_{N,p}(\bm{S})$
is the updated estimate of a stationary point of
$f\circ\Phi_{\bm{S}}^{-1}$.
Then, we restart to apply a Euclidean optimization algorithm to the new minimization of
$f\circ\Phi_{\bm{S}'}^{-1}$
with the initial point
$\bm{V}_{n+1}$.

In the ALCP strategy, we can employ, in principle, any Euclidean optimization algorithm for minimization of
$f\circ\Phi_{\bm{S}}^{-1}$
while keeping the same
$\bm{S} \in {\rm O}(N)$.
However, since the strategy considers the minimization of time-varying functions
$f\circ\Phi_{\bm{S}}^{-1}$
over time-varying Euclidean spaces
$Q_{N,p}(\bm{S})$
due to changing
$\bm{S}$,
we encounter the following question:
\begin{quote}
{\it Does
$(\bm{V}_{n})_{n=0}^{\infty}$
generated by the ALCP strategy have any convergence property?}
\end{quote}
To this natural question, in this paper, we present affirmatively a unified convergence analysis for the ALCP strategy incorporating a fairly standard class of Euclidean optimization algorithms.
These include, e.g., the gradient descent method~\cite{Nocedal-Wright06}, the conjugate gradient method~\cite{Andrei20,Gilbert-Nocedal92,Al-baali85,Dai-Yuan99,Dai-etal00,Dai-Yuan01,Hager-Zhang05}, the three-term conjugate gradient method~\cite{Zhang-Zhou-Li07,Narushima-Yabe-Ford11,Khoshsimaye-Ashrafi23}, and the quasi-Newton method~\cite{Li-Fukushima01}.
Since the proposed convergence analysis relies on certain common convergence properties ensured by this class of Euclidean optimization algorithms, we establish a unified convergence analysis regarding the gradient to zero for the ALCP strategy incorporating such a class of Euclidean optimization algorithms.
The numerical experiments demonstrate that the ALCP strategy incorporating such a class of Euclidean optimization algorithms\footnote{
	Numerical experiments in preliminary reports~\cite{Kume-Yamada19,Kume-Yamada20} show that the ALCP strategy outperforms the {\it retraction-based strategy} for optimization over
	$\St(p,N)$
	even in the cases where more elaborated Euclidean optimization algorithms (e.g., the so-called Anderson acceleration~\cite{Kume-Yamada19}, and the Nesterov-type accelerated gradient method~\cite{Kume-Yamada20}) are employed.
} outperforms the standard retraction-based strategy.

We remark that the proposed ALCP strategy can also be interpreted along another line of research, called {\it a dynamic trivialization}~\cite{Lezcano19,Criscitiello-Boumal19,Lezcano20,Criscitiello-Boumal22}, for Problem~\ref{problem:origin} with {\it a retraction}~\cite{Absil-Mahony-Sepulchre08} because
$\Phi_{\bm{S}}^{-1}$
can be seen as the Cayley transform-based retraction~\cite{Wen-Yin13} through a specially designed invertible linear operator~\cite{Kume-Yamada22} (see Section~\ref{sec:retraction}).
So far, theoretical analyses for the dynamic trivialization seem to have been limited to the case where a computationally inefficient retraction, called {\it the exponential mapping}, is used.
Exceptionally, the proposed convergence analysis can be seen as a theoretical analysis for the dynamic trivialization with the efficiently available Cayley transform-based retraction~\cite{Wen-Yin13}.

\textbf{Notation}
$\mathbb{N}$,
$\mathbb{N}_{0}$,
and
$\mathbb{R}$
denote respectively the set of all positive integers, the set of all nonnegative integers, and the set of all real numbers.
For general
$n\in \mathbb{N}$,
$\bm{I}_{n} \in \mathbb{R}^{n\times n}$
stands for the identity matrix in
$\mathbb{R}^{n\times n}$,
but the identity matrix in
$\mathbb{R}^{N\times N}$
is denoted simply by
$\bm{I} \in \mathbb{R}^{N\times N}$.
For
$p \leq N$,
$\bm{I}_{N\times p} \in \mathbb{R}^{N\times p}$
denotes the matrix of the first
$p$
columns of
$\bm{I}$.
For
$p< N$,
the matrices
$\bm{U}_{\rm up} \in \mathbb{R}^{p \times p}$
and
$\bm{U}_{\rm lo} \in \mathbb{R}^{(N-p) \times p}$
denote respectively the upper and the lower block matrices of
$\bm{U} \in \mathbb{R}^{N\times p}$,
i.e.,
$\bm{U}=[\bm{U}_{\rm up}^{\TT}\ \bm{U}_{\rm lo}^{\TT}]^{\TT}$.
The matrices
$\bm{S}_{\rm le} \in \mathbb{R}^{N\times p}$
and
$\bm{S}_{\rm ri} \in \mathbb{R}^{N\times (N-p)}$
denote respectively the left and right block matrices of
$\bm{S} \in \mathbb{R}^{N\times N}$,
i.e.,
$\bm{S}=[\bm{S}_{\rm le}\ \bm{S}_{\rm ri}]$.
For a square matrix
$\bm{X}:= \begin{bmatrix} \bm{X}_{11} \in \mathbb{R}^{p\times p} & \bm{X}_{12} \in \mathbb{R}^{p\times (N-p)} \\ \bm{X}_{21} \in \mathbb{R}^{(N-p)\times p} & \bm{X}_{22} \in \mathbb{R}^{(N-p)\times (N-p)} \end{bmatrix}\in \mathbb{R}^{N\times N}$,
we use the notation
$\dbra{\bm{X}}_{ij}:= \bm{X}_{ij}$
for
$i,j \in \{1,2\}$.
For
$\bm{X} \in \mathbb{R}^{m\times n}$,
$\bm{X}^{\TT}$
denotes the transpose of
$\bm{X}$.
In particular for
$\bm{X}\in\mathbb{R}^{n\times n}$,
$\Skew(\bm{X}) = (\bm{X}-\bm{X}^{\TT})/2$
is the skew-symmetric component of
$\bm{X}$.
For square matrices
$\bm{X}_{i}\in \mathbb{R}^{n_{i}\times n_{i}}\ (1\leq i \leq k)$,
$\diag(\bm{X}_{1},\bm{X}_{2},\ldots,\bm{X}_{k})\in \mathbb{R}^{(\sum_{i=1}^{k}n_{i})\times  (\sum_{i=1}^{k}n_{i})}$
denotes the block diagonal matrix with diagonal blocks
$\bm{X}_{1},\bm{X}_{2},\ldots,\bm{X}_{k}$.
For a given matrix
$\bm{X} \in \mathbb{R}^{m\times n}$,
(i)
$\|\bm{X}\|_{2}$
and
$\|\bm{X}\|_{F}$
denote respectively the spectral norm and the Frobenius norm,
(ii)
$\sigma_{\max}(\bm{X})$
and
$\sigma_{\min}(\bm{X})$
denote respectively the nonnegative largest and the smallest singular values of
$\bm{X}$.
For a subset
$\mathcal{N}$
of
$\mathbb{N}_{0}$,
$\abs{\mathcal{N}}$
denotes the cardinality of
$\mathcal{N}$.
To distinguish from the symbol used for the orthogonal group
${\rm O}(N)$,
the symbol
$\mathfrak{o}(\cdot)$
will be used in place of the standard big O notation.
For a differentiable mapping
$F:\mathcal{X}\to\mathcal{Y}$
between Euclidean spaces
$\mathcal{X}$
and
$\mathcal{Y}$,
its G\^{a}teaux derivative at
$\bm{x}\in \mathcal{X}$
is the linear mapping
$\mathrm{D}F(\bm{x}):\mathcal{X} \to \mathcal{Y}$
defined, with a real variable
$t(\neq 0)$,
by
\begin{equation}
	(\bm{v}\in \mathcal{X}) \quad \mathrm{D}F(\bm{x})[\bm{v}] = \lim_{t\to 0}\frac{F(\bm{x}+t\bm{v}) - F(\bm{x})}{t}.
\end{equation}
For a differentiable function
$J:\mathcal{X} \to \mathbb{R}$
defined over the Euclidean space equipped with an inner product
$\inprod{\cdot}{\cdot}$,
$\nabla J(\bm{x}) \in \mathcal{X}$
is the gradient of
$J$
at
$\bm{x} \in \mathcal{X}$,
i.e.,
$\mathrm{D}J(\bm{x})[\bm{v}] = \inprod{\nabla J(\bm{x})}{\bm{v}}$
for all
$\bm{v} \in \mathcal{X}$.

\section{Preliminaries}\label{sec:preliminary}
\subsection{Generalized Cayley transform for $\St(p,N)$}\label{sec:Cayley}
For
$p,N\in \mathbb{N}$
satisfying
$p\leq N$,
the generalized Cayley transform
$\Phi_{\bm{S}}$
with
$\bm{S}\in {\rm O}(N)$
for an open dense parametrization of
$\St(p,N)$
is defined as
\begin{align}
	\Phi_{\bm{S}}:\St(p,N) \setminus E_{N,p}(\bm{S})\to Q_{N,p}(\bm{S}):\bm{U}\mapsto
	\begin{bmatrix}
		\bm{A}_{\bm{S}}(\bm{U}) & -\bm{B}^{\TT}_{\bm{S}}(\bm{U}) \\
		\bm{B}_{\bm{S}}(\bm{U}) & \bm{0}
	\end{bmatrix} 
	\label{eq:Cayley}
\end{align}
with
\begin{align}
	\bm{A}_{\bm{S}}(\bm{U}) &:= 2(\bm{I}_{p}+\bm{S}_{\rm le}^{\TT}\bm{U})^{-\mathrm{T}}\Skew(\bm{U}^{\TT}\bm{S}_{\rm le})(\bm{I}_{p}+\bm{S}_{\rm le}^{\TT}\bm{U})^{-1} \in Q_{p,p} \label{eq:Cay_A}\\
	\bm{B}_{\bm{S}}(\bm{U}) &:= - \bm{S}_{\rm ri}^{\TT}\bm{U}(\bm{I}_{p}+\bm{S}_{\rm le}^{\TT}\bm{U})^{-1} \in \mathbb{R}^{(N-p) \times p}, \label{eq:Cay_B}
\end{align}
where
\begin{equation}
	E_{N,p}(\bm{S}):= \{\bm{U} \in \St(p,N) \mid \det(\bm{I}_{p} + \bm{S}_{\rm le}^{\TT}\bm{U}) = 0\} \label{eq:singular}
\end{equation}
is called the singular-point set of
$\Phi_{\bm{S}}$
and
$Q_{N,p}(\bm{S})$
(see~\eqref{eq:skew}) is clearly a vector space~\cite{Kume-Yamada22}.
For every
$\bm{S} \in {\rm O}(N)$,
$\Phi_{\bm{S}}$
is a diffeomorphism between the special subset 
$\St(p,N)\setminus E_{N,p}(\bm{S})\left(\subset \St(p,N)\right)$
and the vector space
$Q_{N,p}(\bm{S})$~\cite{Kume-Yamada22}.
The inversion mapping of
$\Phi_{\bm{S}}$
is given, in terms of
$\varphi^{-1}$
in~\eqref{eq:ICT_O},
by
\begin{align}
	\Phi_{\bm{S}}^{-1}: Q_{N,p}(\bm{S})\to \St(p,N)\setminus E_{N,p}(\bm{S})
	:\bm{V} \mapsto
	& \bm{S}\varphi^{-1}(\bm{V})\bm{I}_{N\times p}=\bm{S}(\bm{I}-\bm{V})(\bm{I}+\bm{V})^{-1}\bm{I}_{N\times p} \!\!\!\!\!\!\\
	 & = 2(\bm{S}_{\rm le} - \bm{S}_{\rm ri}\dbra{\bm{V}}_{21})\bm{M}^{-1}-\bm{S}_{\rm le}, \label{eq:Cayley_inv_origin}
\end{align}
where
$\bm{M}:= \bm{I}_{p}+\dbra{\bm{V}}_{11}+\dbra{\bm{V}}_{21}^{\TT}\dbra{\bm{V}}_{21}\in \mathbb{R}^{p\times p}$
is the Schur complement matrix~\cite{Horn-Johonson12} of
$\bm{I}+\bm{V}\in \mathbb{R}^{N\times N}$.
We call
$\bm{S}$
{\it a center point} of
$\Phi_{\bm{S}}$
because its left block matrix
$\bm{S}_{\rm le} \in \St(p,N)\setminus E_{N,p}(\bm{S})$
corresponds to the origin
$\Phi_{\bm{S}}(\bm{S}_{\rm le}) = \bm{0}$
in the vector space
$Q_{N,p}(\bm{S})$.
We summarize useful properties of
$\Phi_{\bm{S}}$
as a parametrization of
$\St(p,N)$
in Fact~\ref{fact:property} below, implying respectively that~\ref{enum:dense}
$\St(p,N)$
can be parameterized almost globally by
$\Phi_{\bm{S}}$
with any
$\bm{S} \in {\rm O}(N)$;
\ref{enum:characterize_singular} every singular-point in
$E_{N,p}(\bm{S})$
is interpreted as a point at infinity in
$Q_{N,p}(\bm{S})$.
\begin{fact}[\cite{Kume-Yamada22}]\label{fact:property}
	Let
	$\bm{S} \in {\rm O}(N)$
	and
	$p < N$.
	Then, the following hold:
	\begin{enumerate}[label=(\alph*)]
		\item \label{enum:dense}
			$\St(p,N)\setminus E_{N,p}(\bm{S})$
			is an open dense\footnote{
				The closure of
				$\St(p,N)\setminus E_{N,p}(\bm{S})$
				is
				$\St(p,N)$.
				For every
				$\bm{U} \in \St(p,N)$,
				there exists a sequence
				$(\bm{U}_{n})_{n=0}^{\infty}( \subset \St(p,N) \setminus E_{N,p}(\bm{S}))$
				which converges to
				$\bm{U}$.
			} subset of
			$\St(p,N)$.
		\item \label{enum:characterize_singular}
			Let
			$g:Q_{N,p}(\bm{S})\to\mathbb{R}:\bm{V}\mapsto \det(\bm{I}_{p} + \bm{S}_{\rm le}^{\TT}\Phi_{\bm{S}}^{-1}(\bm{V}))$.
			Then,
			$\lim_{\|\bm{V}\|_{2}\to\infty} g(\bm{V}) = 0$.
			Conversely, if
			$(\bm{V}_{n})_{n=0}^{\infty} \subset Q_{N,p}(\bm{S})$
			satisfies
			$\lim_{n\to \infty}g(\bm{V}_{n}) = 0$,
			then
			$\lim_{n\to \infty}\|\bm{V}_{n}\|_{2} = \infty$.
	\end{enumerate}
\end{fact}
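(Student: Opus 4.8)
\smallskip
\noindent\textbf{Proof proposal.}
For part~\ref{enum:dense} I would dispatch openness immediately: the map $\bm{U}\mapsto\det(\bm{I}_{p}+\bm{S}_{\rm le}^{\TT}\bm{U})$ is a polynomial, hence continuous, on $\St(p,N)$, so $\St(p,N)\setminus E_{N,p}(\bm{S})$ is the preimage of the open set $\mathbb{R}\setminus\{0\}$. For denseness I would invoke the identity theorem for real-analytic functions: $E_{N,p}(\bm{S})$ is the zero locus, on the connected (here $p<N$) real-analytic manifold $\St(p,N)$, of the real-analytic function $h(\bm{U}):=\det(\bm{I}_{p}+\bm{S}_{\rm le}^{\TT}\bm{U})$. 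Since the columns of $\bm{S}\in{\rm O}(N)$ are orthonormal, $h(\bm{S}_{\rm le})=\det(\bm{I}_{p}+\bm{S}_{\rm le}^{\TT}\bm{S}_{\rm le})=\det(2\bm{I}_{p})=2^{p}\neq0$, so $h\not\equiv0$; hence its zero set has empty interior, and therefore the complement $\St(p,N)\setminus E_{N,p}(\bm{S})$ is dense.

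For part~\ref{enum:characterize_singular} the plan is to first obtain a closed form for $g$. Substituting the formula~\eqref{eq:Cayley_inv_origin} for $\Phi_{\bm{S}}^{-1}$ and using $\bm{S}_{\rm le}^{\TT}\bm{S}_{\rm le}=\bm{I}_{p}$ and $\bm{S}_{\rm le}^{\TT}\bm{S}_{\rm ri}=\bm{0}$ (both from $\bm{S}\in{\rm O}(N)$) collapses everything to $\bm{S}_{\rm le}^{\TT}\Phi_{\bm{S}}^{-1}(\bm{V})=2\bm{M}^{-1}-\bm{I}_{p}$, whence
\[
g(\bm{V})=\det(2\bm{M}^{-1})=\frac{2^{p}}{\det\bm{M}},\qquad \bm{M}:=\bm{I}_{p}+\dbra{\bm{V}}_{11}+\dbra{\bm{V}}_{21}^{\TT}\dbra{\bm{V}}_{21}.
\]
Here $\dbra{\bm{V}}_{11}$ is skew-symmetric and $\dbra{\bm{V}}_{21}^{\TT}\dbra{\bm{V}}_{21}\succeq\bm{0}$, so the symmetric part of $\bm{M}$ dominates $\bm{I}_{p}$ and in particular $\det\bm{M}>0$. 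The whole statement then reduces to two-sided control of $\det\bm{M}$ by powers of $\|\bm{V}\|_{2}$.

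The upper bound is routine: $\dbra{\bm{V}}_{11}$ and $\dbra{\bm{V}}_{21}$ are submatrices of $\bm{V}$, so $\|\bm{M}\|_{2}\leq1+\|\bm{V}\|_{2}+\|\bm{V}\|_{2}^{2}$, hence $\det\bm{M}\leq(1+\|\bm{V}\|_{2}+\|\bm{V}\|_{2}^{2})^{p}$ and $g(\bm{V})\geq2^{p}(1+\|\bm{V}\|_{2}+\|\bm{V}\|_{2}^{2})^{-p}$, which already settles the converse implication $g(\bm{V}_{n})\to0\Rightarrow\|\bm{V}_{n}\|_{2}\to\infty$. The remaining claim $\lim_{\|\bm{V}\|_{2}\to\infty}g(\bm{V})=0$ needs a lower bound $\det\bm{M}\to\infty$, and this is the step I expect to be the main obstacle, because $\bm{M}$ is not normal. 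I would factor $\det\bm{M}=\det\bm{H}\cdot\det(\bm{I}_{p}+\bm{K})$, where $\bm{H}:=\bm{I}_{p}+\dbra{\bm{V}}_{21}^{\TT}\dbra{\bm{V}}_{21}$ and $\bm{K}:=\bm{H}^{-1/2}\dbra{\bm{V}}_{11}\bm{H}^{-1/2}$ is skew-symmetric; then $\det(\bm{I}_{p}+\bm{K})=\prod_{j}(1+\mu_{j}^{2})\geq1+\|\bm{K}\|_{2}^{2}$, the product running over the conjugate pairs $\pm i\mu_{j}$ of eigenvalues of $\bm{K}$, while $\det\bm{H}=\prod_{i}(1+\sigma_{i}(\dbra{\bm{V}}_{21})^{2})\geq1+\|\dbra{\bm{V}}_{21}\|_{2}^{2}$. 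Combining these with $\|\bm{K}\|_{2}\geq\|\dbra{\bm{V}}_{11}\|_{2}/\|\bm{H}\|_{2}$ and $\|\bm{H}\|_{2}=1+\|\dbra{\bm{V}}_{21}\|_{2}^{2}$ gives
\[
\det\bm{M}\;\geq\;\bigl(1+\|\dbra{\bm{V}}_{21}\|_{2}^{2}\bigr)+\frac{\|\dbra{\bm{V}}_{11}\|_{2}^{2}}{1+\|\dbra{\bm{V}}_{21}\|_{2}^{2}}.
\]
A short case split on whether $\|\dbra{\bm{V}}_{21}\|_{2}$ stays bounded (using $\|\bm{V}\|_{2}\leq\|\dbra{\bm{V}}_{11}\|_{2}+\|\dbra{\bm{V}}_{21}\|_{2}$) shows the right-hand side tends to $\infty$ whenever $\|\bm{V}\|_{2}\to\infty$, so $g(\bm{V})=2^{p}/\det\bm{M}\to0$, which finishes the argument.
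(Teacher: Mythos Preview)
The paper does not supply its own proof of this statement: it is recorded as a \emph{Fact} with a citation to~\cite{Kume-Yamada22}, so there is no in-paper argument to compare against. That said, your proposal is correct and self-contained.

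A couple of minor remarks. In part~\ref{enum:dense} your appeal to connectedness of $\St(p,N)$ for $p<N$ is exactly the right hypothesis to make the real-analytic identity theorem bite; you might want to state explicitly that this is where $p<N$ is used. In part~\ref{enum:characterize_singular}, the sentence ``the symmetric part of $\bm{M}$ dominates $\bm{I}_{p}$ and in particular $\det\bm{M}>0$'' is true but slightly glib: positive-definite symmetric part forces every eigenvalue of the real matrix $\bm{M}$ to have positive real part, and then conjugate pairing gives $\det\bm{M}>0$; alternatively your later factorisation $\bm{M}=\bm{H}^{1/2}(\bm{I}_{p}+\bm{K})\bm{H}^{1/2}$ with $\bm{K}$ skew already yields $\det\bm{M}=\det\bm{H}\cdot\prod_{j}(1+\mu_{j}^{2})>0$ directly. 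The two-sided control of $\det\bm{M}$ you set up---the crude submatrix bound for the upper estimate and the $\bm{H}^{1/2}$-conjugation trick for the lower estimate---is clean, and the final case split via $\|\bm{V}\|_{2}\le\|\dbra{\bm{V}}_{11}\|_{2}+\|\dbra{\bm{V}}_{21}\|_{2}$ (the same inequality the paper records at~\eqref{eq:triangle}) closes the argument without difficulty.
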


For every
$\bm{S} \in {\rm O}(N)$,
the computations of
$\Phi_{\bm{S}}$
and
$\Phi_{\bm{S}}^{-1}$
require
$\mathfrak{o}(N^{2}p+p^{3})$
flops (FLoating-point OPerationS [not 'FLoating point Operations Per Second']), which is high complexity especially for a small
$p$,
i.e., in the case of
$p\ll N$.
However, by employing a special center point
\begin{equation}
	\bm{S} \in \textrm{O}_{p}(N) := \left\{\diag(\bm{T},\bm{I}_{N-p})\mid  \bm{T}\in \textrm{O}(p)\right\}	 \subset {\rm O}(N), \label{eq:structure}
\end{equation}
these complexities for
$\Phi_{\bm{S}}$
and
$\Phi_{\bm{S}}^{-1}$
can be reduced to
$\mathfrak{o}(Np^{2} + p^{3})$
flops~\cite{Kume-Yamada22}.
For any
$\bm{U} \in \St(p,N)$,
Fact~\ref{fact:center_point} below ensures that Algorithm~\ref{alg:center_point} can design a center point
$\bm{S} \in {\rm O}_{p}(N)$
satisfying
$\bm{U} \in \St(p,N) \setminus E_{N,p}(\bm{S})$~\cite{Kume-Yamada22}.
In Section~\ref{sec:adaptive}, we will use Algorithm~\ref{alg:center_point} to choose a 'good' center point for applications of
$\Phi_{\bm{S}}^{-1}$
to Problem~\ref{problem:CP_St}.

\begin{fact}[Parametrization of $\St(p,N)$ by $\Phi_{\bm{S}}$ with $\bm{S} \in {\rm O}_{p}(N) \subset{\rm O}(N)$~\cite{Kume-Yamada22}]\label{fact:center_point}
	For any
	$\bm{U}\in \St(p,N)$,
	let
	$\bm{S} \in {\rm O}_{p}(N) (\subset {\rm O}(N))$
	be generated by Algorithm~\ref{alg:center_point}.
	Then, the following hold:
	\begin{enumerate}[label=(\alph*)]
		\item \label{enum:center_nonsingular}
			$\det(\bm{I}_{p}+\bm{S}_{\rm le}^{\TT}\bm{U}) \geq 1$
			and
			$\bm{U}\in \St(p,N)\setminus E_{N,p}(\bm{S})$.
		\item \label{enum:center_norm}
			$\bm{A}_{\bm{S}}(\bm{U}) \overset{\eqref{eq:Cay_A}}{=} \bm{0}$,
			$\|\bm{B}_{\bm{S}}(\bm{U})\|_{2} \overset{\eqref{eq:Cay_B}}{=} \|\bm{U}_{\rm lo}\bm{Q}_{2}(\bm{I}_{p}+\bm{\Sigma})^{-1}\bm{Q}_{2}^{\TT}\|_{2} \leq 1$
			and
			$\|\Phi_{\bm{S}}(\bm{U})\|_{2} \leq 1$.
	\end{enumerate}
\end{fact}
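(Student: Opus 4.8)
The plan is to unwind the construction behind Algorithm~\ref{alg:center_point}. Reading it as producing a center point of the form $\bm{S} = \diag(\bm{T},\bm{I}_{N-p}) \in {\rm O}_{p}(N)$, where $\bm{T}\in {\rm O}(p)$ is built from a singular value decomposition $\bm{U}_{\rm up} = \bm{Q}_{1}\bm{\Sigma}\bm{Q}_{2}^{\TT}$ of the upper block of $\bm{U}$ (with $\bm{Q}_{1},\bm{Q}_{2}\in {\rm O}(p)$, $\bm{\Sigma}=\diag(\sigma_{1},\dots,\sigma_{p})$, $\sigma_{i}\geq 0$) as $\bm{T}=\bm{Q}_{1}\bm{Q}_{2}^{\TT}$, the whole statement rests on a single computation: since $\bm{S}_{\rm le}$ has upper block $\bm{T}$ and lower block $\bm{0}$, and $\bm{S}_{\rm ri}$ has upper block $\bm{0}$ and lower block $\bm{I}_{N-p}$, we obtain $\bm{S}_{\rm le}^{\TT}\bm{U} = \bm{T}^{\TT}\bm{U}_{\rm up} = \bm{Q}_{2}\bm{\Sigma}\bm{Q}_{2}^{\TT}$ and $\bm{S}_{\rm ri}^{\TT}\bm{U} = \bm{U}_{\rm lo}$. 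In particular $\bm{S}_{\rm le}^{\TT}\bm{U}$ is symmetric and positive semidefinite, and from this both items fall out mechanically.

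For \ref{enum:center_nonsingular}, write $\bm{I}_{p}+\bm{S}_{\rm le}^{\TT}\bm{U} = \bm{Q}_{2}(\bm{I}_{p}+\bm{\Sigma})\bm{Q}_{2}^{\TT}$, so that $\det(\bm{I}_{p}+\bm{S}_{\rm le}^{\TT}\bm{U}) = \prod_{i=1}^{p}(1+\sigma_{i})\geq 1$ because every $\sigma_{i}\geq 0$; nonvanishing of this determinant is exactly the condition $\bm{U}\notin E_{N,p}(\bm{S})$ by \eqref{eq:singular}. For \ref{enum:center_norm}, note that $\bm{U}^{\TT}\bm{S}_{\rm le} = (\bm{S}_{\rm le}^{\TT}\bm{U})^{\TT}$ is symmetric, hence $\Skew(\bm{U}^{\TT}\bm{S}_{\rm le}) = \bm{0}$ and $\bm{A}_{\bm{S}}(\bm{U}) = \bm{0}$ by \eqref{eq:Cay_A} regardless of the outer factors. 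Substituting $(\bm{I}_{p}+\bm{S}_{\rm le}^{\TT}\bm{U})^{-1} = \bm{Q}_{2}(\bm{I}_{p}+\bm{\Sigma})^{-1}\bm{Q}_{2}^{\TT}$ and $\bm{S}_{\rm ri}^{\TT}\bm{U} = \bm{U}_{\rm lo}$ into \eqref{eq:Cay_B} gives the claimed identity for $\|\bm{B}_{\bm{S}}(\bm{U})\|_{2}$. To bound it I would use submultiplicativity, $\|\bm{B}_{\bm{S}}(\bm{U})\|_{2}\leq \|\bm{U}_{\rm lo}\|_{2}\,\|(\bm{I}_{p}+\bm{\Sigma})^{-1}\|_{2}$, together with $\|(\bm{I}_{p}+\bm{\Sigma})^{-1}\|_{2} = \max_{i}(1+\sigma_{i})^{-1}\leq 1$ and $\|\bm{U}_{\rm lo}\|_{2}\leq 1$, the latter because $\bm{U}^{\TT}\bm{U}=\bm{I}_{p}$ forces $\bm{U}_{\rm lo}^{\TT}\bm{U}_{\rm lo} = \bm{I}_{p}-\bm{U}_{\rm up}^{\TT}\bm{U}_{\rm up}\preceq \bm{I}_{p}$. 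Finally, since $\bm{A}_{\bm{S}}(\bm{U})=\bm{0}$, the matrix $\Phi_{\bm{S}}(\bm{U})$ in \eqref{eq:Cayley} has vanishing diagonal blocks, so $\Phi_{\bm{S}}(\bm{U})^{\TT}\Phi_{\bm{S}}(\bm{U}) = \diag\!\left(\bm{B}_{\bm{S}}(\bm{U})^{\TT}\bm{B}_{\bm{S}}(\bm{U}),\,\bm{B}_{\bm{S}}(\bm{U})\bm{B}_{\bm{S}}(\bm{U})^{\TT}\right)$, whence $\|\Phi_{\bm{S}}(\bm{U})\|_{2} = \|\bm{B}_{\bm{S}}(\bm{U})\|_{2}\leq 1$.

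I do not anticipate a genuine obstacle here; the argument is essentially bookkeeping around one SVD. The only point that deserves care is checking that the convention baked into Algorithm~\ref{alg:center_point} (the precise choice of $\bm{T}$, and how the $\bm{I}_{N-p}$ block is padded) really makes $\bm{S}_{\rm le}^{\TT}\bm{U}$ positive semidefinite and not merely symmetric: it is positive semidefiniteness, i.e. $1+\sigma_{i}\geq 1$, that underlies both the $\det\geq 1$ estimate and the norm bounds, whereas symmetry alone would already give $\bm{A}_{\bm{S}}(\bm{U})=\bm{0}$ but none of the quantitative claims.
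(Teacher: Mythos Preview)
Your argument is correct. Note, however, that the paper does not supply its own proof of this statement: it is recorded as a Fact with a citation to~\cite{Kume-Yamada22}, so there is nothing in the present paper to compare against. What you have written is precisely the natural verification from the construction in Algorithm~\ref{alg:center_point}, and each step checks out: the key observation $\bm{S}_{\rm le}^{\TT}\bm{U}=\bm{Q}_{2}\bm{\Sigma}\bm{Q}_{2}^{\TT}$ is symmetric positive semidefinite, from which the determinant bound, the vanishing of $\bm{A}_{\bm{S}}(\bm{U})$, the explicit form and bound for $\|\bm{B}_{\bm{S}}(\bm{U})\|_{2}$, and the identity $\|\Phi_{\bm{S}}(\bm{U})\|_{2}=\|\bm{B}_{\bm{S}}(\bm{U})\|_{2}$ all follow as you wrote.
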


\begin{algorithm}[t] 
	\caption{Choice of center point }
	\label{alg:center_point}
	\begin{algorithmic}
		\Require
		$\bm{U} = \begin{bmatrix} \bm{U}_{\rm up}^{\TT} \in \mathbb{R}^{p\times p} & \bm{U}_{\rm lo}^{\TT} \in \mathbb{R}^{p\times (N-p)}  \end{bmatrix}^{\TT} \in \St(p,N)$
		\State
			Compute the singular value decomposition
			$\bm{U}_{\rm up} = \bm{Q}_{1}\bm{\Sigma}\bm{Q}_{2}^{\TT}$.
		\State
		\Comment{$\bm{Q}_{1},\bm{Q}_{2} \in {\rm O}(N)$, $\bm{\Sigma}\in \mathbb{R}^{p\times p}$ is a diagonal matrix  whose diagonal entries are\\\hspace{4em} singular values of $\bm{U}_{\rm up}$.}
		\State
		$\bm{S} \leftarrow \diag(\bm{Q}_{1}\bm{Q}_{2}^{\TT},\bm{I}_{N-p}) \in {\rm O}_{p}(N)$ in~\eqref{eq:structure}
		\Ensure
		$\bm{S} \in {\rm O}_{p}(N)(\subset {\rm O}(N))$
	\end{algorithmic}
\end{algorithm}

\subsection{Naive Cayley parametrization strategy} \label{sec:CP}
The naive Cayley parametrization strategy (CP) considers Problem~\ref{problem:CP_St}, i.e., the problem to find an approximate stationary point
$\bm{V}^{\diamond} \in Q_{N,p}(\bm{S})$
of
$f\circ\Phi_{\bm{S}}^{-1}$
over the Euclidean space
$Q_{N,p}(\bm{S})$
with a chosen fixed
$\bm{S} \in {\rm O}(N)$~\cite{Yamada-Ezaki03,Kume-Yamada22}
(see Remark~\ref{remark:stationary}~(b) for the relation between a stationary point of Problem~\ref{problem:origin} and that of
$f\circ\Phi_{\bm{S}}^{-1}$).
In the naive CP strategy, we can directly utilize powerful Euclidean optimization algorithms to generate estimates
$(\bm{V}_{n})_{n=0}^{\infty} \in Q_{N,p}(\bm{S})$
for a solution to Problem~\ref{problem:CP_St}.
However, the naive CP strategy may suffer from the slow convergence in a case where the current estimate
$\bm{V}_{n} \in Q_{N,p}(\bm{S})$
of a solution to Problem~\ref{problem:CP_St} is updated at a distant point from zero~\cite{Yamada-Ezaki03,Kume-Yamada22}.
This performance degradation is called {\it the singular-point issue} because
$\Phi_{\bm{S}}^{-1}(\bm{V})$
approaches a singular-point in
$E_{N,p}(\bm{S})$
as
$\|\bm{V}\|_{2} \to \infty$ (see Fact~\ref{fact:property}~\ref{enum:characterize_singular}).
A quantitative reason for the singular-point issue can be explained via {\it the mobility analysis of $\Phi_{\bm{S}}^{-1}$} below.
\begin{fact}[Mobility analysis of $\Phi_{\bm{S}}^{-1}$~\cite{Kume-Yamada22}] \label{fact:mobility}
	Let
	$p,N\in \mathbb{N}$
	satisfy
	$p < N$,
	and let
	$\bm{S} \in {\rm O}(N)$.
	Then, for
	$\tau > 0$
	and
	$\bm{\mathcal{E}}\in Q_{N,p}(\bm{S})$
	satisfying
	$\|\bm{\mathcal{E}}\|_{F} = 1$,
	we have
	\begin{equation}\label{eq:mobility}
	(\bm{V} \in Q_{N,p}(\bm{S})) \quad \|\Phi_{\bm{S}}^{-1}(\bm{V}+\tau \bm{\mathcal{E}}) - \Phi_{\bm{S}}^{-1}(\bm{V})\|_{F} \leq \tau r(\bm{V}),
	\end{equation}
	where
	\begin{align}
		r(\bm{V}):=
		\frac{2\sqrt{1+\|\dbra{\bm{V}}_{21}\|_{2}^{2}}}{1+\sigma_{\min}^{2}(\dbra{\bm{V}}_{21})}.\label{eq:mobility_rate}
	\end{align}
	We call
	$r:Q_{N,p}(\bm{S})\to \mathbb{R}$
	the {\it mobility} of
	$\Phi_{\bm{S}}^{-1}$,
	which is bounded below as
	\begin{equation}
		(\bm{V} \in Q_{N,p}(\bm{S})) \quad r(\bm{V}) \geq 2(1+\|\dbra{\bm{V}}_{21}\|_{2}^{2})^{-1/2}, \label{eq:bound_ratio}
	\end{equation}
	where the equality holds in~\eqref{eq:bound_ratio} when
	$\sigma_{\min}(\dbra{\bm{V}}_{21}) = \sigma_{\max}(\dbra{\bm{V}}_{21})(=\|\dbra{\bm{V}}_{21}\|_{2})$.
\end{fact}

The mobility
$r(\bm{V})$
of
$\Phi_{\bm{S}}^{-1}$
in Fact~\ref{fact:mobility} can serve as an indicator of the sensitivity of
$\Phi_{\bm{S}}^{-1}$
to the change at
$\bm{V}\in Q_{N,p}(\bm{S})$.
Since the small
$r(\bm{V})$
forces the change
$\|\Phi_{\bm{S}}^{-1}(\bm{V}+\tau \bm{\mathcal{E}}) - \Phi_{\bm{S}}^{-1}(\bm{V})\|_{F}$
to be small due to~\eqref{eq:mobility}, the slow convergence of the naive CP strategy likely occurs if estimates
$\bm{V}_{n} \in Q_{N,p}(\bm{S})$
are updated in the area where
$r(\bm{V}_{n})$
are small.

In the following, we consider two simple examples to see cases where the mobility can be small or large.
Under the equality condition of~\eqref{eq:bound_ratio}, i.e.,
$\sigma_{\min}(\dbra{\bm{V}}_{21}) = \sigma_{\max}(\dbra{\bm{V}}_{21})(=\|\dbra{\bm{V}}_{21}\|_{2})$,
the mobility
$r(\bm{V})$
becomes small when
$\|\dbra{\bm{V}}_{21}\|_{2}$
increases.
This example implies that
$r(\bm{V})$
tends to be small as
$\dbra{\bm{V}}_{21}$
increases, and thus there is a risk of the singular-point issue in a case where
$\bm{V}_{n}$
is updated at a distant point from zero.
On the other hand, such a risk is precluded if
$\bm{V}_{n}$
is updated not distant from zero because
$\bm{V} \in Q_{N,p}(\bm{S})$
around zero does not yield small
$r(\bm{V})$,
which is confirmed by the special example
$r(\bm{0}) = 2$.

The above mobility analysis suggests that updating
$\bm{V}_{n}$
not distant from zero is effective in avoiding the risk of the singular-point issue of the naive CP strategy.
To realize such updating, a center point
$\bm{S}$
is desired to be chosen strategically in order to make
$\Phi_{\bm{S}}(\bm{U}^{\star})$
not distant from zero for a global minimizer
$\bm{U}^{\star} \in \St(p,N)$
of Problem~\ref{problem:origin}.
However, finding such
$\bm{S}$
in advance is not realistic in general because even a good estimate of
$\bm{U}^{\star}$
is unknown before running optimization algorithms.

\section{Optimization over the Stiefel manifold with the adaptive localized Cayley parametrization}\label{sec:technique}
\subsection{Adaptive localized Cayley parametrization strategy}\label{sec:adaptive}
To circumvent the singular-point issue observed in the naive CP strategy, we present a Cayley parametrization strategy with an adaptive change of center points, named an {\it Adaptive Localized Cayley Parametrization (ALCP)} strategy in Algorithm~\ref{alg:ALCP}, where Table~\ref{table:notation} illustrates notations used in Algorithm~\ref{alg:ALCP}.
To use
$\Phi_{\bm{S}}$
and
$\Phi_{\bm{S}}^{-1}$
computationally efficiently within Algorithm~\ref{alg:ALCP}, we employ center points in
${\rm O}_{p}(N)$
(see~\eqref{eq:structure}) obtained by Algorithm~\ref{alg:center_point}.
Figure~\ref{fig:ALCP} is an illustration of the process in Algorithm~\ref{alg:ALCP}.
\begin{algorithm}[t]
	\caption{Adaptive localized Cayley parametrization strategy}
	\label{alg:ALCP}
	\begin{algorithmic}[1]
		\Require
		$\bm{U}_0 \in \St(p,N)$
		\State
		$l \leftarrow 0$,
		$n \leftarrow 0$,
		$\nu \leftarrow 0$
		\State
		$\bm{S}_{[l]} \leftarrow {\rm Algorithm~\ref{alg:center_point}}(\bm{U}_{n})$
		\State
		$\bm{V}_{n} \leftarrow \Phi_{\bm{S}_{[l]}}(\bm{U}_{n})$
		\State
		Initialize
		$\mathfrak{R}_{[\nu,n]}$
		(depend on the chosen
		$\mathcal{A}^{\langle l \rangle}$)
		\While{}
			\If{$\|\nabla (f\circ \Phi_{\bm{S}_{[l]}}^{-1}) (\bm{V}_{n})\|_{F} = 0$ or stopping criteria hold true}
				\State
				$\mathcal{N}_{l} \leftarrow \{k \in \mathbb{N}_{0} \mid \nu \leq k \leq n\}$
				\State
				{\bf break}
			\EndIf
			\State \label{lst:line:V}
			$\bm{\widetilde{V}}_{n+1} \leftarrow \mathcal{A}^{\langle l \rangle}(\bm{V}_{n},\mathfrak{R}_{[\nu,n]})$
			\State
			$\bm{U}_{n+1} \leftarrow \Phi_{\bm{S}_{[l]}}^{-1}(\bm{\widetilde{V}}_{n+1})$
			\If{Condition, e.g.,~\eqref{eq:change}, to detect of the singular-point issue holds} \label{lst:line:if}
			\State \label{lst:line:change_start}
				$\mathcal{N}_{l} \leftarrow \{k \in \mathbb{N}_{0} \mid \nu \leq k \leq n\}$
				\State
				$\nu\leftarrow n+1$
				\State
				$\bm{S}_{[l+1]} \leftarrow {\rm Algorithm~\ref{alg:center_point}}(\bm{U}_{n+1})$ \label{lst:line:change2}
				\Comment{Change the center point}
				\State
				$\bm{V}_{n+1} \leftarrow \Phi_{\bm{S}_{[l+1]}}(\bm{U}_{n+1})$ \label{lst:line:reparameterized}
				\Comment{Reparameterize $\bm{U}_{n+1}$ using $\bm{S}_{[l+1]}$}
				\State
				Reinitialize
				$\mathfrak{R}_{[\nu,n+1]}=\mathfrak{R}_{[n+1,n+1]}$
				(depend on the chosen
				$\mathcal{A}^{\langle l+1 \rangle}$)
				\State \label{lst:line:change_end}
				$l\leftarrow l+1$
			\Else
				\State
				$\bm{V}_{n+1} \leftarrow \bm{\widetilde{V}}_{n+1}$ \label{lst:line:V_not}
			\State
				Update from
				$\mathfrak{R}_{[\nu,n]}$
				to
				$\mathfrak{R}_{[\nu,n+1]}$
				(depend on the chosen
				$\mathcal{A}^{\langle l \rangle}$)
			\EndIf \label{lst:line:ifend}
			\State
			$n \leftarrow n+1$
		\EndWhile
		\Ensure
		$\bm{U}_{n} \in \St(p,N)$
	\end{algorithmic}
\end{algorithm}

\afterpage{%
\begin{table}[t]
	\caption{Notation in Algorithm~\ref{alg:ALCP}}
	\begin{tabularx}{\textwidth}{>{\hsize=.4\hsize}X>{\hsize=.6\hsize}X}  \label{table:notation} \\
		\toprule
$\mathcal{N}_{l} \subset \mathbb{N}_{0}\ (l\in \mathbb{N}_{0})$ & Set of iteration numbers during the $l$th center point $\bm{S}_{[l]}$ is used as a center point\\
$\bm{S}_{[l]} \in {\rm O}_{p}(N)\ (l\in \mathbb{N}_{0})$ & $l$th center point \\
$\bm{V}_{n} \in Q_{N,p}(\bm{S}_{[l]})\ (l\in \mathbb{N}_{0},n\in \mathcal{N}_{l})$ & $n$th estimate of an approximate solution of $f\circ\Phi_{\bm{S}_{[l]}}^{-1}$ over $Q_{N,p}(\bm{S}_{[l]})$ \\
$\bm{U}_{n} \in \St(p,N)\ (n\in \mathbb{N}_{0})$ & $n$th estimate of a solution to Problem~\ref{problem:origin} \\
$\mathfrak{R}_{[\nu,n]}\ (\nu,n \in \mathbb{N}_{0} \ \mathrm{s.t.} \ \nu \leq n)$ &
Strategic information at the update from $\bm{V}_{n}$ to $\bm{\widetilde{V}}_{n+1}$ obtained in the process of estimating
	$(\bm{V}_{k})_{k=\nu}^{n}$ (see Remark~\ref{remark:Euclidean_optimization_algorithm}) \\
$\mathcal{A}^{\langle l \rangle}\ (l\in \mathbb{N}_{0})$ &
Euclidean optimization algorithm for estimating an approximate stationary point of $f\circ\Phi_{\bm{S}_{[l]}}^{-1}$ over $Q_{N,p}(\bm{S}_{[l]})$
(see Remark~\ref{remark:Euclidean_optimization_algorithm}) \\
\bottomrule
\end{tabularx}
\end{table}
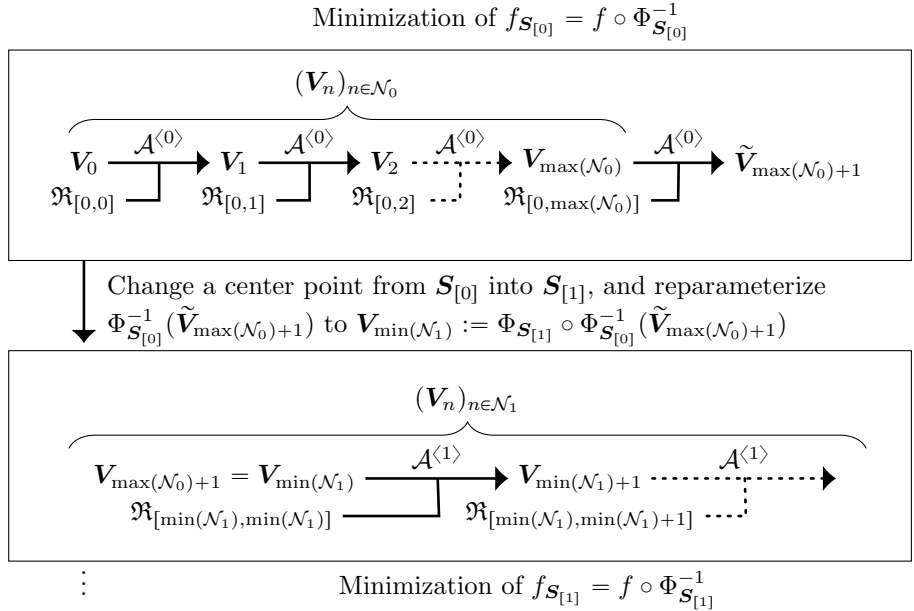
\begin{figure}[h]
	\centering
	\begin{tikzpicture}
		\pgfmathsetmacro\length{2}
		\pgfmathsetmacro\diffv{0.5}

		\draw (-1,-1.3) rectangle (5.5*\length,1.5) node[above,xshift=-155pt]{Minimization of $f_{\bm{S}_{[0]}}=f\circ\Phi_{\bm{S}_{[0]}}^{-1}$};

		\foreach \i in {0,1,2}{
			\draw (\length*\i,0)node(\i){$\bm{V}_{\i}$};
			\draw (\length*\i,-\diffv) node (\i_R) {$\mathfrak{R}_{[0,\i]}$};
		}
		\foreach \i  in {0,1}{
			\pgfmathsetmacro\j{int(\i+1)}
			\draw[line width=1pt, arrows = {Latex[width=10pt, length=5pt]-}] (\j) -- (\i) node [midway, above, sloped] (\i_arrow) {$\mathcal{A}^{\langle 0 \rangle}$};
			\draw[line width=1pt]  (\i_arrow) -- ($(\i_R)+(1,0)$) -- (\i_R);
		}
		\draw (\length*3+0.5,0)node(n){$\bm{V}_{\max(\mathcal{N}_{0})}$};
		\draw (\length*3+0.5,-\diffv) node (n_R) {$\mathfrak{R}_{[0,\max(\mathcal{N}_{0})]}$};
		\draw[line width=1pt, dash pattern=on 1pt off 3pt, line cap=round,arrows = {Latex[width=10pt, length=5pt]-}] (n) -- (2) node [midway, above, sloped] (n_arrow) {$\mathcal{A}^{\langle 0 \rangle}$};
		\draw[line width=1pt, dash pattern=on 1pt off 3pt, line cap=round]  (n_arrow) -- ($(2_R)+(1,0)$) -- (2_R);

		\draw (\length*4+1.5,0) node(n+1){$\bm{\widetilde{V}}_{\max(\mathcal{N}_{0})+1}$};
		\draw[line width=1pt, arrows = {Latex[width=10pt, length=5pt]-}] (n+1) -- (n) node [midway, above, sloped] (n+1_arrow) {$\mathcal{A}^{\langle 0 \rangle}$};
		\draw[line width=1pt]  (n+1_arrow) -- ($(n_R)+(1.4,0)$) -- (n_R);

		\draw [decorate,decoration={brace,amplitude=10pt}] (-0.2,0.4)--(3.6*\length,0.4) node (brace)[black,midway,yshift=10pt,pos=0.5,above]{$(\bm{V}_{n})_{n\in\mathcal{N}_{0}}$};

		\pgfmathsetmacro\h{-4.2}
		\draw[line width=1pt, arrows = {Latex[width=10pt, length=5pt]-}] (0,-2.4) -- (0, -1.3) node [pos=0,above right, align=left, yshift=-5pt, xshift=5pt] (TextNode) {Change a center point from $\bm{S}_{[0]}$ into $\bm{S}_{[1]}$, and reparameterize\\ $\Phi_{\bm{S}_{[0]}}^{-1}(\bm{\widetilde{V}}_{\max(\mathcal{N}_{0})+1})$ to $\bm{V}_{\min(\mathcal{N}_{1})}:=\Phi_{\bm{S}_{[1]}}\circ\Phi_{\bm{S}_{[0]}}^{-1}(\bm{\widetilde{V}}_{\max(\mathcal{N}_{0})+1})$};

		\draw (-1,-5.3) rectangle (5.5*\length,-2.5) node[below,xshift=-145pt,yshift=-80pt](N_1){Minimization of $f_{\bm{S}_{[1]}}=f\circ\Phi_{\bm{S}_{[1]}}^{-1}$};
		\draw (0,\h)node[right](new_n+1){$\bm{V}_{\max(\mathcal{N}_{0})+1} = \bm{V}_{\min(\mathcal{N}_{1})}$};
		\draw (0.5,\h-\diffv) node[right] (new_n+1_R) {$\mathfrak{R}_{[\min(\mathcal{N}_{1}),\min(\mathcal{N}_{1})]}$};

		\draw (3.3*\length,\h)node(n+2){$\bm{V}_{\min(\mathcal{N}_{1})+1}$};
		\draw (3.3*\length,\h-\diffv) node (n+2_R) {$\mathfrak{R}_{[\min(\mathcal{N}_{1}),\min(\mathcal{N}_{1})+1]}$};
		\draw[line width=1pt, arrows = {Latex[width=10pt, length=5pt]-}] (n+2) -- (new_n+1) node [midway, above, sloped] (n+1_arrow) {$\mathcal{A}^{\langle 1 \rangle}$};
		\draw[line width=1pt]  (n+1_arrow) -- ($(new_n+1_R)+(2.75,0)$) -- (new_n+1_R);

		\draw[line width=1pt, dash pattern=on 1pt off 3pt, line cap=round,arrows = {Latex[width=10pt, length=5pt]-}] (5*\length,\h) -- (n+2) node [midway, above, sloped] (n+2_arrow) {$\mathcal{A}^{\langle 1 \rangle}$};
		\draw[line width=1pt, dash pattern=on 1pt off 3pt, line cap=round]  (n+2_arrow) -- ($(n+2_R)+(2.2,0)$) -- (n+2_R);
		\draw [decorate,decoration={brace,amplitude=10pt}] (-0.2,-3.8)--(5.2*\length,-3.8) node (brace)[black,midway,yshift=10pt,pos=0.5,above]{$(\bm{V}_{n})_{n\in\mathcal{N}_{1}}$};
		\draw (0,-5.1) node[below](dots){$\vdots$};
	\end{tikzpicture}
	\caption{An illustration of the process of the ALCP strategy.}
	\label{fig:ALCP}
\end{figure}
}

Based on the mobility analysis of
$\Phi_{\bm{S}}^{-1}$
in Fact~\ref{fact:mobility}, we found a risk of slow convergence for the naive CP strategy in a case where estimates
$\bm{V}_{n} \in Q_{N,p}(\bm{S})$
of a solution to Problem~\ref{problem:CP_St} are updated at a distant point from zero.
In contrast to the naive CP strategy where
a center point
$\bm{S} \in {\rm O}(N)$
is treated as a predetermined parameter, the ALCP strategy tries to estimate a good center point according to the mobility analysis essentially for Problem~\ref{problem:ALCP_grad}.
To this end, center points
$\bm{S}$
are changed adaptively for
$\bm{V}_{n+1}=\Phi_{\bm{S}}(\bm{U}_{n+1})$
to stay not distant from zero in the process of Algorithm~\ref{alg:ALCP}.

While the $l$th center point
$\bm{S}_{[l]}\in {\rm O}_{p}(N)$
is kept in use, Algorithm~\ref{alg:ALCP} updates $(\bm{V}_{n})_{n\in\mathcal{N}_{l}}$
by using a Euclidean optimization algorithm, say
$\mathcal{A}^{\langle l \rangle}$
(see Remark~\ref{remark:Euclidean_optimization_algorithm}),
in the common Euclidean space
$Q_{N,p}(\bm{S}_{[l]})$,
where
$\mathcal{N}_{l}$
is the set of updating indices at which
$\bm{S}_{[l]}$
is used as a center point, i.e.,
$\mathcal{N}_{l}$
is an interval subset of
$\mathbb{N}_{0}$
satisfying\footnote{
				Since
				$\mathbb{N}_{0}$
				is a well-ordered set, every its nonempty subset has the minimum element.
			}
\begin{align}
	& \bigcup_{l\in\mathbb{N}_{0}}\mathcal{N}_{l} = \mathbb{N}_{0}, \quad (l_{1}\neq l_{2})\quad \mathcal{N}_{l_{1}} \cap \mathcal{N}_{l_{2}} = \emptyset, \\
	& 0 < \abs{\mathcal{N}_{l}} < \infty \Rightarrow \max(\mathcal{N}_{l})+1 = \min(\mathcal{N}_{l+1}). \label{eq:N_l_consecutive}
\end{align}
In Algorithm~\ref{alg:ALCP}, remark that we can employ any Euclidean optimization algorithm as
$\mathcal{A}^{\langle l \rangle}$
for estimating an approximate stationary point of
$f\circ\Phi_{\bm{S}_{[l]}}^{-1}$
over
$Q_{N,p}(\bm{S}_{[l]})$
in a way exactly same as the naive CP strategy.
In principle, we update the estimates
$(\bm{V}_{n},\bm{S}_{[l]})_{n\in\mathcal{N}_{l}} \subset Q_{N,p}(\bm{S}_{[l]}) \times {\rm O}_{p}(N)$
of a solution
$(\bm{V}^{\star},\bm{S}^{\star}) \in Q_{N,p}(\bm{S}^{\star})\times {\rm O}(N)$
to Problem~\ref{problem:ALCP_grad} by using
$\mathcal{A}^{\langle l \rangle}$,
and can obtain
$(\bm{U}_{n})_{n\in \mathcal{N}_{l}}$
if necessary by applying
$\Phi_{\bm{S}_{[l]}}^{-1}$
to
$(\bm{V}_{n})_{n\in\mathcal{N}_{l}}$
in Algorithm~\ref{alg:ALCP}.

\begin{remark}[Euclidean optimization algorithm] \label{remark:Euclidean_optimization_algorithm}
	Let
	$J:\mathcal{X}\to \mathbb{R}$
	be a differentiable function over the Euclidean space
	$\mathcal{X}$.
	Let
	$\mathcal{N} \subset \mathbb{N}_{0}$
	be an interval subset, and
	$\bm{x}_{\min(\mathcal{N})} \in \mathcal{X}$
	a given initial point for estimating a stationary point
	$\bm{x}^{\star} \in \mathcal{X}$
	of
	$J$.
	For all Euclidean optimization algorithms,
	each update from
	$\bm{x}_{n} \in \mathcal{X}$
	to
	$\bm{x}_{n+1} \in \mathcal{X}$
	for searching
	$\bm{x}^{\star} \in \mathcal{X}$
	can be expressed as
	\begin{equation} 
		(n\in \mathcal{N}) \quad \mathcal{A}: \left(\bm{x}_{n},\mathfrak{R}_{[\min(\mathcal{N}),n]}\right) \mapsto \bm{x}_{n+1} \label{eq:Euclidean_optimization}
	\end{equation}
	with certain strategic information
	$\mathfrak{R}_{[\min(\mathcal{N}),n]}$,
	e.g., a partial history of search directions.
	More precisely,
	$\mathfrak{R}_{[\min(\mathcal{N}),n]}$
	is assumed to become available in the process of estimating
	$(\bm{x}_{k})_{k=\min(\mathcal{N})}^{n}$,
	and depends on the chosen algorithm
	(see~\eqref{eq:record} in Section~\ref{sec:numerical_retraction} in the cases of the conjugate gradient method).
\end{remark}

In line~\ref{lst:line:if}, by using certain alarming conditions (see, e.g., Example~\ref{ex:condition} below),
Algorithm~\ref{alg:ALCP} detects if there is the risk of the singular-point issue around
$\bm{\widetilde{V}}_{n+1} \in Q_{N,p}(\bm{S}_{[l]})$
obtained in line~\ref{lst:line:V}.
In a case where the alarming condition holds true at
$\bm{\widetilde{V}}_{n+1}$,
the center point is changed from
$\bm{S}_{[l]} \in {\rm O}_{p}(N)$
to
$\bm{S}_{[l+1]} \in {\rm O}_{p}(N)$
by applying Algorithm~\ref{alg:center_point} to
$\bm{U}_{n+1}:= \Phi_{\bm{S}_{[l]}}^{-1}(\bm{\widetilde{V}}_{n+1}) \in \St(p,N)\setminus E_{N,p}(\bm{S}_{[l]})$.
After then,
$\bm{U}_{n+1}$
is reparameterized into
$\bm{V}_{n+1} := \Phi_{\bm{S}_{[l+1]}}^{-1}(\bm{U}_{n+1}) \in Q_{N,p}(\bm{S}_{[l+1]})$
with the new
$\bm{S}_{[l+1]}$
in line~\ref{lst:line:reparameterized}.
Since the reparameterized
$\bm{V}_{n+1}$
is guaranteed to satisfy
$\|\bm{V}_{n+1}\|_{2} \leq 1$
by Fact~\ref{fact:center_point}, the risk of the singular-point issue is automatically precluded according to the mobility analysis.

\begin{example}[Alarming conditions to detect the singular-point issue]\label{ex:condition}
	The mobility analysis in Fact~\ref{fact:mobility} suggests that the condition
	$\|\bm{\widetilde{V}}_{n+1}\|_{2} > T$
	with a predetermined
	$T > 0$
	can serve as a simple alarming condition for detection of the singular-point issue in line~\ref{lst:line:if} of Algorithm~\ref{alg:ALCP}.
	In view of the computational complexity, the exact computation of
	$\|\bm{\widetilde{V}}_{n+1}\|_{2}$
	requires
	$\mathfrak{o}(N^{3})$
	flops, which is certainly prohibited in particular for real-time applications to the case
	$p \ll N$.
	Instead of
	$\|\bm{\widetilde{V}}_{n+1}\|_{2} > T$,
	we can use the following surrogate alarming condition
	\begin{equation} \label{eq:change}
		\|\dbra{\bm{\widetilde{V}}_{n+1}}_{11}\|_{2} + \|\dbra{\bm{\widetilde{V}}_{n+1}}_{21}\|_{2} > T
	\end{equation}
	with the block matrices of
	$\bm{\widetilde{V}}_{n+1}$.
	In a case where the alarming condition~\eqref{eq:change} does not hold,
	$\|\bm{\widetilde{V}}_{n+1}\|_{2} \leq T$
	is guaranteed
	from the triangle inequality
	\begin{align}
		\|\bm{\widetilde{V}}_{n+1}\|_{2} & = 
		\begin{Vmatrix} \dbra{\bm{\widetilde{V}}_{n+1}}_{11} & -\dbra{\bm{\widetilde{V}}_{n+1}}_{21}^{\TT} \\ \dbra{\bm{\widetilde{V}}_{n+1}}_{21} & \bm{0} \end{Vmatrix}_{2}
		\leq \begin{Vmatrix} \dbra{\bm{\widetilde{V}}_{n+1}}_{11} & \bm{0} \\ \bm{0} & \bm{0} \end{Vmatrix}_{2}
		+ \begin{Vmatrix} \bm{0} & - \dbra{\bm{\widetilde{V}}_{n+1}}_{21}^{\TT} \\ \dbra{\bm{\widetilde{V}}_{n+1}}_{21} &\bm{0} \end{Vmatrix}_{2} \\
														 & = \|\dbra{\bm{\widetilde{V}}_{n+1}}_{11}\|_{2} + \|\dbra{\bm{\widetilde{V}}_{n+1}}_{21}\|_{2}
														 \leq T, \label{eq:triangle}
	\end{align}
	where the last equality is verified simply via an eigenvalue expression of the spectral norm
	$\|\cdot\|_{2}$\footnote{
	\begin{align}
		& \begin{Vmatrix} \bm{0} & - \dbra{\bm{\widetilde{V}}_{n+1}}_{21}^{\TT} \\ \dbra{\bm{\widetilde{V}}_{n+1}}_{21} &\bm{0} \end{Vmatrix}_{2}
		= \sqrt{\lambda_{\max}\left(\begin{bmatrix} \bm{0} & - \dbra{\bm{\widetilde{V}}_{n+1}}_{21}^{\TT} \\ \dbra{\bm{\widetilde{V}}_{n+1}}_{21} &\bm{0} \end{bmatrix}^{\TT}\begin{bmatrix} \bm{0} & - \dbra{\bm{\widetilde{V}}_{n+1}}_{21}^{\TT} \\ \dbra{\bm{\widetilde{V}}_{n+1}}_{21} &\bm{0} \end{bmatrix}\right)} \\
																											 & = \sqrt{\lambda_{\max}\left(\begin{bmatrix} \dbra{\bm{\widetilde{V}}_{n+1}}_{21}^{\TT}\dbra{\bm{\widetilde{V}}_{n+1}}_{21}&\bm{0} \\  \bm{0} & \dbra{\bm{\widetilde{V}}_{n+1}}_{21}\dbra{\bm{\widetilde{V}}_{n+1}}_{21}^{\TT}\end{bmatrix}\right)} \\
		& = \sqrt{\max\{\lambda_{\max}(\dbra{\bm{\widetilde{V}}_{n+1}}_{21}^{\TT}\dbra{\bm{\widetilde{V}}_{n+1}}_{21}),\lambda_{\max}(\dbra{\bm{\widetilde{V}}_{n+1}}_{21}\dbra{\bm{\widetilde{V}}_{n+1}}_{21}^{\TT})\}} \\
		& = \sqrt{\lambda_{\max}(\dbra{\bm{\widetilde{V}}_{n+1}}_{21}^{\TT}\dbra{\bm{\widetilde{V}}_{n+1}}_{21})}
		= \sqrt{\sigma_{\max}^{2}(\dbra{\bm{\widetilde{V}}_{n+1}}_{21})}  = \|\dbra{\bm{\widetilde{V}}_{n+1}}_{21}\|_{2}.
	\end{align}
}.
	The threshold
	$T$
	in~\eqref{eq:change}
	should be chosen for Algorithm~\ref{alg:ALCP} to enjoy efficacies of Euclidean optimization algorithms incorporated in Algorithm~\ref{alg:ALCP} because the center points could be changed too often, e.g., if
	$T$
	is set too small.
	To avoid such a situation, we recommend to employ the alarming condition~\eqref{eq:change} with
	$T>1$
	in line~\ref{lst:line:if} because
	$\|\dbra{\bm{V}_{n+1}}_{11}\|_{2} + \|\dbra{\bm{V}_{n+1}}_{21}\|_{2} \leq 1$
	holds for the reparameterized
	$\bm{V}_{n+1}$
	in line~\ref{lst:line:reparameterized}
	(see Fact~\ref{fact:center_point}~\ref{enum:center_norm}).
	We also remark that choices of conditions used in line~\ref{lst:line:if} are flexible as long as they can detect the singular-point issue.
\end{example}

By employing~\eqref{eq:change} as an alarming condition for detection of the singular-point issue in line~\ref{lst:line:if} of Algorithm~\ref{alg:ALCP}, we can ensure the boundedness of
$(\bm{V}_{n})_{n=0}^{\infty}$
below.
This boundedness will be used for our convergence analysis in Section~\ref{sec:convergence}.
\begin{lemma} \label{lemma:bounded}
	Let
	$f:\mathbb{R}^{N\times p} \to \mathbb{R}$
	be continuous, and
	let
	$(\bm{V}_{n})_{n=0}^{\infty}$
	be generated by Algorithm~\ref{alg:ALCP} in use of the condition~\eqref{eq:change} with
	$T > 0$
	in line~\ref{lst:line:if}.
	Then, it holds
	$\|\bm{V}_{n}\|_{2} \leq \max\{1,T\}\ (n\in\mathbb{N}_{0})$,
	i.e.,
	$(\bm{V}_{n})_{n=0}^{\infty}$
	is bounded.
\end{lemma}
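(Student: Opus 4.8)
The plan is to track, for each index $n$, by which of the three production rules in Algorithm~\ref{alg:ALCP} the iterate $\bm{V}_n$ was generated, and to bound $\|\bm{V}_n\|_2$ separately in each case; no recursion on the bound is actually needed. Inspecting Algorithm~\ref{alg:ALCP}, every iterate $\bm{V}_n$ arises in exactly one of the following ways: (i) the initialization $\bm{V}_0 = \Phi_{\bm{S}_{[0]}}(\bm{U}_0)$ with $\bm{S}_{[0]} = {\rm Algorithm~\ref{alg:center_point}}(\bm{U}_0)$; (ii) for some $n \in \mathbb{N}_0$, the reparameterization $\bm{V}_{n+1} = \Phi_{\bm{S}_{[l+1]}}(\bm{U}_{n+1})$ performed in line~\ref{lst:line:reparameterized} after a center-point change, with $\bm{S}_{[l+1]} = {\rm Algorithm~\ref{alg:center_point}}(\bm{U}_{n+1})$; or (iii) for some $n \in \mathbb{N}_0$, the plain acceptance $\bm{V}_{n+1} = \bm{\widetilde{V}}_{n+1}$ in line~\ref{lst:line:V_not}, which is executed precisely when the alarming condition~\eqref{eq:change} fails at $\bm{\widetilde{V}}_{n+1}$.

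For cases (i) and (ii), I would note that the relevant center point is, by construction, the output of Algorithm~\ref{alg:center_point} applied to a Stiefel iterate ($\bm{U}_0$, respectively $\bm{U}_{n+1} = \Phi_{\bm{S}_{[l]}}^{-1}(\bm{\widetilde{V}}_{n+1}) \in \St(p,N)$). Hence Fact~\ref{fact:center_point}~\ref{enum:center_norm} applies verbatim and gives $\|\Phi_{\bm{S}_{[0]}}(\bm{U}_0)\|_2 \le 1$, respectively $\|\Phi_{\bm{S}_{[l+1]}}(\bm{U}_{n+1})\|_2 \le 1$, so that $\|\bm{V}_n\|_2 \le 1 \le \max\{1,T\}$ for every such index $n$.

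For case (iii), the failure of~\eqref{eq:change} at $\bm{\widetilde{V}}_{n+1}$ means exactly that $\|\dbra{\bm{\widetilde{V}}_{n+1}}_{11}\|_2 + \|\dbra{\bm{\widetilde{V}}_{n+1}}_{21}\|_2 \le T$. Combining this with the triangle-inequality estimate~\eqref{eq:triangle} already established in Example~\ref{ex:condition} yields
\begin{equation*}
\|\bm{V}_{n+1}\|_2 = \|\bm{\widetilde{V}}_{n+1}\|_2 \le \|\dbra{\bm{\widetilde{V}}_{n+1}}_{11}\|_2 + \|\dbra{\bm{\widetilde{V}}_{n+1}}_{21}\|_2 \le T \le \max\{1,T\}.
\end{equation*}
Since cases (i)--(iii) are exhaustive, this establishes $\|\bm{V}_n\|_2 \le \max\{1,T\}$ for all $n \in \mathbb{N}_0$, i.e.\ the boundedness of $(\bm{V}_n)_{n=0}^{\infty}$.

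I do not anticipate any genuine analytic difficulty here: the two substantive inputs (Fact~\ref{fact:center_point}~\ref{enum:center_norm} and the inequality~\eqref{eq:triangle}) are already in hand, and continuity of $f$ is not even used. The only point requiring care is the bookkeeping — verifying that the three production rules above exhaust the ways an iterate can be produced by Algorithm~\ref{alg:ALCP}, and recognizing that case (iii) is precisely the branch in which the negation of the alarming condition~\eqref{eq:change} is available, which is what makes the bound $\le T$ there.
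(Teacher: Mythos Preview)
Your proof is correct and follows essentially the same approach as the paper's: both arguments use Fact~\ref{fact:center_point}~\ref{enum:center_norm} to bound the initialized/reparameterized iterates by $1$, and the failure of the alarming condition~\eqref{eq:change} together with~\eqref{eq:triangle} to bound the remaining iterates by $T$. The only difference is organizational---the paper groups iterates by the intervals $\mathcal{N}_l$ and treats $\bm{V}_{\min(\mathcal{N}_l)}$ separately, whereas you partition directly by production rule---but the substance is identical.
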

\begin{proof}
	Let
	$l \in \mathbb{N}_{0}$.
	By the condition~\eqref{eq:change} in line~\ref{lst:line:if} and by line~\ref{lst:line:V_not} of Algorithm~\ref{alg:ALCP}, we have
	$\|\dbra{\bm{V}_{n+1}}_{11}\|_{2} + \|\dbra{\bm{V}_{n+1}}_{21}\|_{2} \leq T$
	as long as
	$n+1\in \mathcal{N}_{l}$.
	Combining this inequality and
	$\|\bm{V}_{n+1}\|_{2} \leq \|\dbra{\bm{V}_{n+1}}_{11}\|_{2} + \|\dbra{\bm{V}_{n+1}}_{21}\|_{2}$
	from~\eqref{eq:triangle},
	we have
	$\|\bm{V}_{n}\|_{2} \leq \max\{T,\|\bm{V}_{\min(\mathcal{N}_{l})}\|_{2}\}\ (n\in \mathcal{N}_{l})$.
	Since Fact~\ref{fact:center_point}~(b) ensures
	$\|\bm{V}_{\min(\mathcal{N}_{l})}\|_{2} \leq 1$,
	we obtain
	$\|\bm{V}_{n}\|_{2} \leq \max\{T,1\}\ (n\in \mathcal{N}_{l})$.
\end{proof}

\subsection{Convergence analysis for the ALCP strategy incorporating line-search methods of Armijo-type} \label{sec:convergence}
In this section, as fairly standard Euclidean optimization algorithms to be incorporated as
$\mathcal{A}^{\langle l \rangle}$
in Algorithm~\ref{alg:ALCP},
we consider line-search methods of Armijo-type, called in this paper Type A algorithm.
\begin{definition}[Type A algorithm: Line-search method of Armijo-type for interval $\mathcal{N} \subset \mathbb{N}_{0}$]\label{definition:line_search}
	Assume that a function
	$J:\mathcal{X}\to \mathbb{R}$
	is differentiable and
	$\nabla J$
	is Lipschitz continuous over the Euclidean space
	$\mathcal{X}$.
	For an interval subset
	$\mathcal{N} \subset \mathbb{N}_{0}$
	and a given point
	$\bm{x}_{\min(\mathcal{N})} \in \mathcal{X}$,
	we say a Euclidean optimization algorithm (see Remark~\ref{remark:Euclidean_optimization_algorithm})
	\begin{equation} 
		(n\in \mathcal{N}) \quad \mathcal{A}: \left(\bm{x}_{n},\mathfrak{R}_{[\min(\mathcal{N}),n]}\right) \mapsto \bm{x}_{n+1}:= \bm{x}_{n}+\gamma_{n}\bm{d}_{n} \in \mathcal{X}
	\end{equation}
	for estimating
	$\bm{x}^{\star} \in \mathcal{X}$
	s.t.
	$\nabla J(\bm{x}^{\star}) = \bm{0}$
	is a {\it Type A algorithm (line-search method of Armijo-type)}
	if
	a stepsize
	$\gamma_{n} > 0$
	and a search direction
	$\bm{d}_{n} \in \mathcal{X}$,
	determined by
	$\bm{x}_{n}$
	and
	$\mathfrak{R}_{[\min(\mathcal{N}),n]}$,
	satisfy
	\begin{enumerate}[label=(\alph*),leftmargin=*,align=left]
		\item
			$\bm{d}_{\min(\mathcal{N})} = -\nabla J(\bm{x}_{\min(\mathcal{N})})$
		\item
			If
			$\bm{x}_{n}$
			does not satisfy
			$\nabla J(\bm{x}_{n})\neq \bm{0}$,
			$\bm{d}_{n}$
			must satisfy the {\it descent condition} (see, e.g.,~\cite{Nocedal-Wright06}), i.e.,
			\begin{equation}
				(n\in\mathcal{N}) \quad \inprod{\nabla J(\bm{x}_{n})}{\bm{d}_{n}} < 0.
			\end{equation}
		\item
			$\gamma_{n}$
			must satisfy the {\it Armijo condition} (see, e.g.,~\cite{Nocedal-Wright06}) with some
			$c\in (0,1)$,
			i.e.,
			\begin{equation}
				(n\in\mathcal{N}) \quad J(\bm{x}_{n}+\gamma_{n}\bm{d}_{n}) \leq J(\bm{x}_{n}) + c\gamma_{n}\inprod{\nabla J(\bm{x}_{n})}{\bm{d}_{n}}. \label{eq:Armijo_Euclidean}
			\end{equation}
		\item
			Assume
			$\mathcal{N}$
			is infinite
			and
			$(\bm{x}_{n})_{n\in\mathcal{N}}$
			is bounded.
			Then,
			$\liminf_{n\to \infty,\\ n \in \mathcal{N}}\|\nabla J(\bm{x}_{n})\| = 0$
			is guaranteed.
	\end{enumerate}
\end{definition}

\begin{example}[Type A algorithm in Definition~\ref{definition:line_search}] \label{ex:Armijo}
	For minimization of
	$J:\mathcal{X}\to\mathcal{X}$
	over a Euclidean space
	$\mathcal{X}$,
	fairly standard Euclidean optimization algorithms  can be seen as special instances of Type A algorithms, e.g.,~the gradient descent method~\cite{Nocedal-Wright06}, the conjugate gradient method~\cite{Andrei20,Gilbert-Nocedal92,Al-baali85,Dai-Yuan99,Dai-etal00,Dai-Yuan01,Hager-Zhang05}, three-term conjugate gradient method~\cite{Zhang-Zhou-Li07,Narushima-Yabe-Ford11,Khoshsimaye-Ashrafi23}, and the quasi-Newton method~\cite{Li-Fukushima01}.
	For such fairly standard Euclidean optimization algorithms,
	the global convergence
	$\liminf_{n\to\infty} \|\nabla J(\bm{x}_{n})\| = 0$
	is guaranteed by assuming commonly
	(i) the boundedness of the level set
	$\mathrm{lev}_{\leq J(\bm{x}_{0})}J:=\{\bm{x}\in \mathcal{X} \mid J(\bm{x}) \leq J(\bm{x}_{0}) \}$,
	and
	(ii)
	the Lipschitz continuity of
	$\nabla J$
	over
	$\mathrm{lev}_{\leq J(\bm{x}_{0})}J$.
	On the other hand as remarked in~\cite[pp.97-98]{Andrei20},
	the 'boundedness assumption, i.e., (i), of the level set' is not necessarily always required because the boundedness of
	$\mathrm{lev}_{\leq J(\bm{x}_{0})}J$
	has been utilized just to ensure the boundedness of
	$(\bm{x}_{n})_{n=0}^{\infty}$,
	under the monotone decreasing of
	$(J(\bm{x}_{n}))_{n=0}^{\infty}$
	(which is guaranteed by Type A algorithms [see (a)-(c) in Definition~\ref{definition:line_search}]),
	in many convergence analyses for such fairly standard Euclidean optimization algorithms.
	In view of this observation for such algorithms, the global convergence
	$\liminf_{n\to\infty} \|\nabla J(\bm{x}_{n})\| = 0$
	is guaranteed even if the boundedness of
	$\mathrm{lev}_{\leq J(\bm{x}_{0})}J$
	is replaced by the boundedness of
	$(\bm{x}_{n})_{n=0}^{\infty}$
	as an assumption
	(see also Remark~\ref{remark:line-search} on Definition~\ref{definition:line_search_ALCP} for a reason why we will not assume the boundedness of the level set of
	$f_{\bm{S}}$).
\end{example}

In the following, we consider Algorithm~\ref{alg:ALCP} incorporating Type A algorithms as
$\mathcal{A}^{\langle l \rangle}\ (l \in \mathbb{N}_{0})$
in the sense of Definition~\ref{definition:line_search_ALCP} together with Remark~\ref{remark:conditions}.
For simplicity, we use notations
$f_{\bm{S}}:= f\circ\Phi_{\bm{S}}^{-1}$
and
$\nabla f_{\bm{S}}:=\nabla (f\circ\Phi_{\bm{S}}^{-1})$.
To identify the index
$l \in \mathbb{N}_{0}$
of the time-varying center point at $n$th update in Algorithm~\ref{alg:ALCP},
we introduce the following nondecreasing function satisfying
$\ell(n+1) - \ell(n) \in \{0,1\}$:
\begin{equation} \label{eq:ln}
	\ell:\mathbb{N}_{0} \to \mathbb{N}_{0}: n \mapsto l,\ \mathrm{s.t.},\ n\in \mathcal{N}_{l},\ \mathrm{i.e.},\ \bm{V}_{n} \in Q_{N,p}(\bm{S}_{[l]}).
\end{equation}
\begin{definition}[Algorithm~\ref{alg:ALCP} incorporating Type A algorithms] \label{definition:line_search_ALCP}
	Assume that a function
	$f:\mathbb{R}^{N\times p}\to \mathbb{R}$
	is differentiable and
	$\nabla f_{\bm{S}}$
	($\bm{S} \in {\rm O}(N)$)
	is Lipschitz continuous with a common Lipschitz constant
	$L>0$\footnote{
		This Lipschitz condition is satisfied if
		$\nabla f$
		is Lipschitz continuous over
		$\St(p,N) (\subset \mathbb{R}^{N\times p})$ (see Fact~\ref{fact:Lipschitz}).\label{foot:Lipschitz}
	}, i.e.,
	$f_{\bm{S}} = f\circ\Phi_{\bm{S}}^{-1}$
	satisfies
	\begin{equation} 
		(\exists L >0, \forall \bm{S} \in {\rm O}(N), \forall \bm{V}_{1},\bm{V}_{2} \in Q_{N,p}(\bm{S}))\
		\|\nabla f_{\bm{S}}(\bm{V}_{1}) - \nabla f_{\bm{S}}(\bm{V}_{2})\|_{F} \leq L\|\bm{V}_{1} - \bm{V}_{2}\|_{F}. \label{eq:Lipschitz}
	\end{equation}
	Then, we say that Algorithm~\ref{alg:ALCP} incorporates Type A algorithms if, for every
	$l \in \mathbb{N}_{0}$
	satisfying
	$\mathcal{N}_{l} \neq \emptyset$,
	$\mathcal{A}^{\langle l \rangle}$
	is a Type A algorithm on the interval subset
	$\mathcal{N}_{l} \subset \mathbb{N}_{0}$
	designed for estimating an approximate stationary point of
	$f_{\bm{S}_{[l]}}$
	over
	$Q_{N,p}(\bm{S}_{[l]})$,
	i.e., the following hold:
	\begin{enumerate}[label=(\alph*),leftmargin=*,align=left]
		\item
			For each
			$l \in \mathbb{N}_{0}$,
			$\bm{V}_{n}\in Q_{N,p}(\bm{S}_{[l]})\ (n\in\mathcal{N}_{l})$
			is updated to
			$\bm{\widetilde{V}}_{n+1} \in Q_{N,p}(\bm{S}_{[l]})$
			in line~\ref{lst:line:V} of Algorithm~\ref{alg:ALCP} as
			\begin{equation} \label{eq:update_rule}
				(n\in \mathcal{N}_{l}) \quad \bm{\widetilde{V}}_{n+1}:=\mathcal{A}^{\langle l \rangle}(\bm{V}_{n}, \mathfrak{R}_{[\min(\mathcal{N}_{l}),n]}) := \bm{V}_{n} + \gamma_{n}\bm{D}_{n},
			\end{equation}
			where
			$\gamma_{n} > 0$
			is a stepsize,
			$\bm{D}_{n} \in Q_{N,p}(\bm{S}_{[l]})$
			is a search direction, and
			$\bm{D}_{\min(\mathcal{N}_{l})} := - \nabla f_{\bm{S}_{[l]}}(\bm{V}_{\min(\mathcal{N}_{l})})$.
			Note that
			$\bm{V}_{n+1}$
			is determined in line~\ref{lst:line:reparameterized} or~\ref{lst:line:V_not} of Algorithm~\ref{alg:ALCP}.
		\item
			For each
			$l\in \mathbb{N}_{0}$
			and
			$n\in \mathcal{N}_{l}$
			satisfying
			$\nabla f_{\bm{S}_{[l]}}(\bm{V}_{n})\neq \bm{0}$,
			$\bm{D}_{n}$
			enjoys the descent condition, i.e.,
			\begin{equation} \label{eq:descent}
				(n\in\mathcal{N}_{l}) \quad \inprod{\nabla f_{\bm{S}_{[l]}}(\bm{V}_{n})}{\bm{D}_{n}} < 0.
			\end{equation}
		\item
			For each
			$l\in \mathbb{N}_{0}$,
			the stepsize
			$\gamma_{n}>0$
			and the search direction
			$\bm{D}_{n} \in Q_{N,p}(\bm{S}_{[l]})$
			satisfy the {\it Armijo condition} with some
			$c\in (0,1)$, i.e.,
			\begin{equation} \label{eq:Armijo}
				(n\in\mathcal{N}_{l}) \quad f_{\bm{S}_{[l]}}(\bm{V}_{n}+\gamma_{n}\bm{D}_{n}) \leq f_{\bm{S}_{[l]}}(\bm{V}_{n}) + c\gamma_{n}\inprod{\nabla f_{\bm{S}_{[l]}}(\bm{V}_{n})}{\bm{D}_{n}}.
			\end{equation}
		\item
			Assume
			$\widehat{l}:=\max_{n\in\mathbb{N}} \ell(n) \in \mathbb{N}_{0}$
			exists, implying thus
			$\mathcal{N}_{\widehat{l}} \subset \mathbb{N}_{0}$
			is a semi-infinite interval, i.e.,
			$[\min(\mathcal{N}_{\widehat{l}}),\infty) \cap \mathbb{N}_{0}$,
			where
			$\ell$
			is defined in~\eqref{eq:ln}.
			Assume
			$(\bm{V}_{n})_{n\in \mathcal{N}_{\widehat{l}}}$
			is bounded (which is guaranteed, e.g., if we employ~\eqref{eq:change} as the alarming condition in line~\ref{lst:line:if} of Algorithm~\ref{alg:ALCP}).
			Then, we can guarantee
			$\liminf_{n\to \infty, n \in \mathcal{N}_{\widehat{l}}}\|\nabla f_{\bm{S}_{[\widehat{l}]}}(\bm{V}_{n})\|_{F} = 0$.
	\end{enumerate}
\end{definition}

\begin{remark}[Conditions in Definition~\ref{definition:line_search_ALCP}]\label{remark:conditions}
	\mbox{}
	\begin{enumerate}[label=(\alph*)]
		\item
			The stepsize
			$\gamma_{n}$
			achieving the Armijo condition~\eqref{eq:Armijo} is available by {\it the backtracking algorithm} (see, e.g.,~\cite{Nocedal-Wright06}).
			The conditions (a)-(c) in Definition~\ref{definition:line_search_ALCP} guarantee the monotone decreasing of
			$(f_{\bm{S}_{[\ell(n)]}}(\bm{V}_{n}))_{n=0}^{\infty}$ (see, e.g.,~\cite[Figure 3.3]{Nocedal-Wright06}).
		\item
			The condition (d) in Definition~\ref{definition:line_search_ALCP} corresponds to the convergence property (Definition~\ref{definition:line_search} (d)) of the Type A algorithms to be employed in Algorithm~\ref{alg:ALCP}.
			The boundedness of
			$(\bm{x}_{n})_{n\in\mathcal{N}}$
			in Definition~\ref{definition:line_search} (d) is one of the key ingredients for convergence analyses, which has been used, e.g., to ensure the boundednesses of
			$(J(\bm{x}_{n}))_{n\in\mathcal{N}}$
			and
			$(\nabla J(\bm{x}_{n}))_{n\in\mathcal{N}}$.
			Fortunately for Algorithm~\ref{alg:ALCP}, the boundedness of
			$(\bm{V}_{n})_{n=0}^{\infty}$
			is guaranteed automatically if \eqref{eq:change} is employed as the alarming condition in line~\ref{lst:line:if} of Algorithm~\ref{alg:ALCP} (see Lemma~\ref{lemma:bounded}).
	\end{enumerate}
\end{remark}

\begin{remark}[Why the boundedness of $\mathrm{lev}_{\leq f_{\bm{S}_{[\widehat{l}]}}(\bm{V}_{\nu})}f_{\bm{S}_{[\widehat{l}]}}$ is not assumed in Definition~\ref{definition:line_search_ALCP} (d)?]\label{remark:line-search}
	In Definition~\ref{definition:line_search_ALCP} (d), under the existence of
	$\widehat{l}:=\max_{n\in\mathbb{N}}\ell(n) \in \mathbb{N}_{0}$
	with
	$\nu := \min(\mathcal{N}_{\widehat{l}})$,
	we do not assume the boundedness of
	$\mathrm{lev}_{\leq f_{\bm{S}_{[\widehat{l}]}}(\bm{V}_{\nu})}f_{\bm{S}_{[\widehat{l}]}}$
	unlike many existing convergence analyses for fairly standard Euclidean optimization algorithms (see Example~\ref{ex:Armijo}).
	This is because
	(i) the boundedness of
	$(\bm{V}_{n})_{n\in \mathcal{N}_{\widehat{l}}}$
	can be guaranteed by employing~\eqref{eq:change} as the alarming condition in line~\ref{lst:line:if} without assuming additionally the boundedness of
	$\mathrm{lev}_{\leq f_{\bm{S}_{[\widehat{l}]}}(\bm{V}_{\nu})}f_{\bm{S}_{[\widehat{l}]}}$
	(see Remark~\ref{remark:conditions}~(b)),
	and
	(ii) the boundedness of
	$\mathrm{lev}_{\leq f_{\bm{S}_{[\widehat{l}]}}(\bm{V}_{\nu})}f_{\bm{S}_{[\widehat{l}]}}$
	can not be guaranteed indeed.
	To explain (ii), consider the situation where a global minimizer
	$\bm{U}^{\star} \in \St(p,N)$
	of
	$f$
	satisfies
	$\bm{U}^{\star} \in E_{N,p}(\bm{S}_{[\widehat{l}]})$
	and
	$f(\bm{U}^{\star}) < f_{\bm{S}_{[\widehat{l}]}}(\bm{V}_{\nu})$\footnote{
		Since we can not check the satisfaction of these conditions before running the algorithm, this situation likely occurs in practice.
	}.
	Suppose
	$(\bm{V}_{n}^{\heartsuit})_{n=0}^{\infty} \subset Q_{N,p}(\bm{S}_{[\widehat{l}]})$
	achieves
	$\lim_{n\to\infty}\Phi_{\bm{S}_{[\widehat{l}]}}^{-1}(\bm{V}_{n}^{\heartsuit}) = \bm{U}^{\star} \in E_{N,p}(\bm{S}_{[\widehat{l}]})$
	(Note: such a sequence exists by the denseness of
	$\St(p,N) \setminus E_{N,p}(\bm{S}_{[\widehat{l}]})$
	in
	$\St(p,N)$ [see Fact~\ref{fact:property}~\ref{enum:dense}]).
	In this case,
	$(\bm{V}_{n}^{\heartsuit})_{n=0}^{\infty}$
	is unbounded in
	$Q_{N,p}(\bm{S}_{[\widehat{l}]})$
	by Fact~\ref{fact:property}~\ref{enum:characterize_singular}, and the continuities of
	$f$
	and
	$\Phi_{\bm{S}_{[\widehat{l}]}}^{-1}$
	imply the existence of
	$n' \in \mathbb{N}$
	such that
	$f(\bm{U}^{\star}) \leq f_{\bm{S}_{[\widehat{l}]}}(\bm{V}_{n}^{\heartsuit}) < f_{\bm{S}_{[\widehat{l}]}}(\bm{V}_{\nu})\ (\forall n \geq n')$,
	implying thus the unbounded sequence
	$(\bm{V}_{n}^{\heartsuit})_{n=0}^{\infty}$
	satisfies
	$(\bm{V}_{n}^{\heartsuit})_{n=n'}^{\infty} \subset \mathrm{lev}_{\leq f_{\bm{S}_{[\widehat{l}]}}(\bm{V}_{\nu})}f_{\bm{S}_{[\widehat{l}]}}$
\end{remark}

Theorem~\ref{theorem:convergence} below is the proposed convergence analysis for the ALCP strategy incorporating Type A algorithm.

\begin{theorem}[Convergence analysis for Algorithm~\ref{alg:ALCP} incorporating Type A algorithms] \label{theorem:convergence}
	Let
	$f:\mathbb{R}^{N\times p} \to \mathbb{R}$
	be differentiable and
	$\nabla f_{\bm{S}}$
	Lipschitz continuous, for every
	$\bm{S} \in {\rm O}(N)$,
	with a common Lipschitz constant
	$L>0$ (see~\eqref{eq:Lipschitz} and Fact~\ref{fact:Lipschitz}).
	Let
	$(\bm{V}_{n})_{n=0}^{\infty}$
	be generated by Algorithm~\ref{alg:ALCP} using the alarming condition~\eqref{eq:change} with
	$T > 0$
	in line~\ref{lst:line:if}.
	Assume that
	\begin{enumerate}[label=(\roman*)]
		\item
			Algorithm~\ref{alg:ALCP} incorporates Type A algorithms (see Definition~\ref{definition:line_search_ALCP}).
		\item
			$\underline{\gamma}:=\inf \{\gamma_{\min(\mathcal{N}_{l})} \mid l\in\mathbb{N}_{0}\}>0$
			in~\eqref{eq:update_rule} (see Remark~\ref{remark:stepsize} for the existence of such
			$(\gamma_{n})_{n=0}^{\infty}$).
	\end{enumerate}
	Suppose
	$\|\nabla f_{\bm{S}_{[\ell(n)]}}(\bm{V}_{n})\|_{F} > 0$
	for all
	$n \in \mathbb{N}_{0}$
	(Otherwise, the existence of some
	$m \in \mathbb{N}_{0}$
	satisfying
	$\|\nabla f_{\bm{S}_{[\ell(m)]}}(\bm{V}_{m})\|_{F} = 0$
	is ensured, implying thus the solution of Problem~\ref{problem:ALCP_grad} is achievable in finite updates of Algorithm~\ref{alg:ALCP}),
	where
	$\ell$
	is defined in~\eqref{eq:ln}.
	Then, we have
	\begin{equation} \label{eq:liminf}
		\liminf_{n\to\infty}\|\nabla(f\circ\Phi_{\bm{S}_{[\ell(n)]}}^{-1})(\bm{V}_{n})\|_{F} = 0.
	\end{equation}
\end{theorem}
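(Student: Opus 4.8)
The plan is to argue by a dichotomy on how many times the center point is changed, exploiting the two convergence-relevant properties carried by Type~A algorithms: property~(a) of Definition~\ref{definition:line_search_ALCP}, which makes the \emph{first} step of every stage a steepest-descent step, and property~(d), which covers a stage that lasts forever. Set $\widehat{l}:=\max_{n\in\mathbb{N}_0}\ell(n)$. If this maximum does not exist (the center point is changed infinitely often) I would telescope the first-step decreases across stages; if it exists (only finitely many changes) I would invoke property~(d) on the final semi-infinite stage $\mathcal{N}_{\widehat{l}}$. Either way this exhibits a subsequence $(\bm{V}_{n_k})$ with $n_k\to\infty$ and $\|\nabla f_{\bm{S}_{[\ell(n_k)]}}(\bm{V}_{n_k})\|_{F}\to 0$, which is~\eqref{eq:liminf}. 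Throughout I would use that, $\St(p,N)$ being compact and $f$ continuous, $f_{\min}:=\min_{\bm{U}\in\St(p,N)}f(\bm{U})$ is finite and $f_{\bm{S}}(\bm{V})=f(\Phi_{\bm{S}}^{-1}(\bm{V}))\geq f_{\min}$ for every $\bm{S}\in{\rm O}(N)$, $\bm{V}\in Q_{N,p}(\bm{S})$; and that, under the standing hypothesis $\nabla f_{\bm{S}_{[\ell(n)]}}(\bm{V}_n)\neq\bm{0}$ for all $n$, the algorithm never breaks and~\eqref{eq:descent} holds at every $n$.

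Next I would record three bookkeeping facts about a stage $l$ with $\mathcal{N}_l\neq\emptyset$. (1)~For $n,n+1\in\mathcal{N}_l$, line~\ref{lst:line:V_not} gives $\bm{V}_{n+1}=\bm{\widetilde{V}}_{n+1}$, while~\eqref{eq:Armijo}--\eqref{eq:descent} give $f_{\bm{S}_{[l]}}(\bm{\widetilde{V}}_{n+1})<f_{\bm{S}_{[l]}}(\bm{V}_n)$; chaining, $(f_{\bm{S}_{[l]}}(\bm{V}_n))_{n\in\mathcal{N}_l}$ is decreasing and, when $\mathcal{N}_l$ is finite, $f_{\bm{S}_{[l]}}(\bm{\widetilde{V}}_{\max(\mathcal{N}_l)+1})\leq f_{\bm{S}_{[l]}}(\bm{\widetilde{V}}_{\min(\mathcal{N}_l)+1})$ (using $\bm{\widetilde{V}}_{\min(\mathcal{N}_l)+1}=\bm{V}_{\min(\mathcal{N}_l)+1}$ when $\abs{\mathcal{N}_l}>1$). (2)~Since $\bm{D}_{\min(\mathcal{N}_l)}=-\nabla f_{\bm{S}_{[l]}}(\bm{V}_{\min(\mathcal{N}_l)})$ by Definition~\ref{definition:line_search_ALCP}~(a), the first-step instance of~\eqref{eq:Armijo} combined with (1) and $\gamma_{\min(\mathcal{N}_l)}\geq\underline{\gamma}>0$ (Assumption (ii)) gives
\[
	f_{\bm{S}_{[l]}}(\bm{\widetilde{V}}_{\max(\mathcal{N}_l)+1})\ \leq\ f_{\bm{S}_{[l]}}(\bm{V}_{\min(\mathcal{N}_l)})\ -\ c\,\underline{\gamma}\,\|\nabla f_{\bm{S}_{[l]}}(\bm{V}_{\min(\mathcal{N}_l)})\|_{F}^{2}.
\]
(3)~At a center-point change after stage $l$, lines~\ref{lst:line:change2}--\ref{lst:line:reparameterized} give $\bm{U}_{\min(\mathcal{N}_{l+1})}=\Phi_{\bm{S}_{[l]}}^{-1}(\bm{\widetilde{V}}_{\max(\mathcal{N}_l)+1})$ and $\bm{V}_{\min(\mathcal{N}_{l+1})}=\Phi_{\bm{S}_{[l+1]}}(\bm{U}_{\min(\mathcal{N}_{l+1})})$, so reparameterization leaves the objective value unchanged:
\[
	f_{\bm{S}_{[l+1]}}(\bm{V}_{\min(\mathcal{N}_{l+1})})\ =\ f(\bm{U}_{\min(\mathcal{N}_{l+1})})\ =\ f_{\bm{S}_{[l]}}(\bm{\widetilde{V}}_{\max(\mathcal{N}_l)+1}).
\]

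In the infinitely-many-changes case, every $\mathcal{N}_l$ ($l\in\mathbb{N}_0$) is a nonempty finite interval and, by~\eqref{eq:N_l_consecutive}, the indices $n_l:=\min(\mathcal{N}_l)$ satisfy $n_l\to\infty$. Writing $g_l:=f_{\bm{S}_{[l]}}(\bm{V}_{n_l})$ and combining (2) with (3) yields $g_{l+1}\leq g_l-c\,\underline{\gamma}\,\|\nabla f_{\bm{S}_{[l]}}(\bm{V}_{n_l})\|_{F}^{2}$ for all $l$; since $g_l\geq f_{\min}$, the decreasing sequence $(g_l)_{l\in\mathbb{N}_0}$ is bounded below, hence convergent, so $\sum_{l=0}^{\infty}(g_l-g_{l+1})<\infty$ and therefore $\|\nabla f_{\bm{S}_{[l]}}(\bm{V}_{n_l})\|_{F}\to 0$; as $\ell(n_l)=l$, this subsequence realizes~\eqref{eq:liminf}. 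In the finitely-many-changes case, $\widehat{l}=\max_{n}\ell(n)$ exists, $\mathcal{N}_{\widehat{l}}=[\min(\mathcal{N}_{\widehat{l}}),\infty)\cap\mathbb{N}_0$ is semi-infinite and $\ell(n)=\widehat{l}$ for $n\in\mathcal{N}_{\widehat{l}}$; Lemma~\ref{lemma:bounded} (the alarming condition being~\eqref{eq:change}) makes $(\bm{V}_n)_{n=0}^{\infty}$, hence $(\bm{V}_n)_{n\in\mathcal{N}_{\widehat{l}}}$, bounded, so Definition~\ref{definition:line_search_ALCP}~(d) applies to stage $\widehat{l}$ and gives $\liminf_{n\to\infty,\,n\in\mathcal{N}_{\widehat{l}}}\|\nabla f_{\bm{S}_{[\widehat{l}]}}(\bm{V}_n)\|_{F}=0$, which is~\eqref{eq:liminf} because $\mathcal{N}_{\widehat{l}}$ is cofinite in $\mathbb{N}_0$. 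I expect the only genuinely delicate point to be fact~(3): constancy of the objective under reparameterization is what lets the per-stage steepest-descent decreases—obtained on different Euclidean domains $Q_{N,p}(\bm{S}_{[l]})$ and for different functions $f_{\bm{S}_{[l]}}$—accumulate into a single telescoping sum bounded by $f_{\min}$; everything else is the interval-partition bookkeeping~\eqref{eq:N_l_consecutive} and the two ready-made guarantees (a) and (d) of Type~A algorithms.
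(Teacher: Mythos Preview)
Your proof is correct and follows essentially the same approach as the paper: the same dichotomy on whether the center point is changed infinitely often, the same use of the first-step (steepest-descent) Armijo decrease telescoped across stages in Case~1, and the same appeal to Lemma~\ref{lemma:bounded} plus Definition~\ref{definition:line_search_ALCP}~(d) in Case~2. The only cosmetic difference is that the paper telescopes in terms of $f(\bm{U}_n)$ (so reparameterization invariance is implicit), whereas you phrase it via $g_l=f_{\bm{S}_{[l]}}(\bm{V}_{n_l})$ and make invariance explicit in your fact~(3); the content is the same.
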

\begin{proof}
	To simplify our analysis, we divide the behaviors of Algorithm~\ref{alg:ALCP} into the following two cases.
	\begin{enumerate}[label=Case \arabic*:,leftmargin=*,align=left]
		\item\label{enum:case1}
			Center points in
			${\rm O}_{p}(N)$
			are changed infinite times, i.e.,
			$\lim_{n\to\infty} \ell(n) = \infty$.
		\item\label{enum:case2}
			Center points in
			${\rm O}_{p}(N)$
			are changed finite times, i.e.,
			${\displaystyle\widehat{l}:=\max_{n\in \mathbb{N}} \ell(n)} \in \mathbb{N}_{0}$
			exists.
	\end{enumerate}

	\ref{enum:case1}
	From (a) in Definition~\ref{definition:line_search_ALCP}, negative gradients
	$-\nabla f_{\bm{S}_{[l]}}(\bm{V}_{\min(\mathcal{N}_{l})})$
	at
	$\bm{V}_{\min(\mathcal{N}_{l})}\in Q_{N,p}(\bm{S}_{[l]})$
	are employed as search directions for all
	$l \in \mathbb{N}_{0}$.
	By the Armijo condition~\eqref{eq:Armijo} in the update from
	$\bm{V}_{\min(\mathcal{N}_{l})}$
	to
	$\bm{\widetilde{V}}_{\min(\mathcal{N}_{l})+1}$,
	and by
	$\bm{U}_{\min(\mathcal{N}_{l})} = \Phi_{\bm{S}_{[l]}}^{-1}(\bm{V}_{\min(\mathcal{N}_{l})})$
	and
	$\bm{U}_{\min(\mathcal{N}_{l})+1} = \Phi_{\bm{S}_{[l]}}^{-1}(\bm{\widetilde{V}}_{\min(\mathcal{N}_{l})+1})$,
	we have
	\begin{equation}
		(l\in \mathbb{N}_{0}) \quad f(\bm{U}_{\min(\mathcal{N}_{l})+1}) \leq f(\bm{U}_{\min(\mathcal{N}_{l})}) - c\gamma_{\min(\mathcal{N}_{l})}\|\nabla f_{\bm{S}_{[l]}}(\bm{V}_{\min(\mathcal{N}_{l})})\|_{F}^{2}.
	\end{equation}
	Then, by summing up the above from
	$l=0$
	to any
	$q\in\mathbb{N}_{0}$,
	and by letting
	$S_{q}:=\sum_{l=0}^{q}\left(f(\bm{U}_{\min(\mathcal{N}_{l})}) - f(\bm{U}_{\min(\mathcal{N}_{l})+1})\right)$,
	we obtain
	\begin{align} \label{eq:limsup_bounded}
		& \sum_{l=0}^{q} \gamma_{\min(\mathcal{N}_{l})}\|\nabla f_{\bm{S}_{[l]}}(\bm{V}_{\min(\mathcal{N}_{l})})\|_{F}^{2} \leq \frac{1}{c} S_{q}.
	\end{align}
	The LHS is clearly monotone increasing.
	Moreover, it is bounded, hence converged, because
	\begin{align}
		S_{q} & = f(\bm{U}_{0}) - \sum_{l=0}^{q-1}\left(\underbrace{f(\bm{U}_{\min(\mathcal{N}_{l})+1}) - f(\bm{U}_{\min(\mathcal{N}_{l+1})})}_{\geq 0\ (\because \min(\mathcal{N}_{l})+1\leq \min(\mathcal{N}_{l+1})\ \mathrm{from~\eqref{eq:N_l_consecutive}})}\right) -f(\bm{U}_{\min(\mathcal{N}_{q})+1}) \\
					& \leq f(\bm{U}_{0}) - f(\bm{U}_{\min(\mathcal{N}_{q})+1})
					\leq f(\bm{U}_{0}) - \min_{\bm{U}\in\St(p,N)} f(\bm{U}) < \infty,
	\end{align}
	where
	the monotone decrease of
	$(f(\bm{U}_{n}))_{n=0}^{\infty} (= (f_{\bm{S}_{[\ell(n)]}}(\bm{V}_{n}))_{n=0}^{\infty})$,
	which is ensured by the descent condition~\eqref{eq:descent} and the Armijo condition~\eqref{eq:Armijo}, is used. Therefore, we obtain
	\begin{align}
		& \underline{\gamma}\sum_{l=0}^{\infty}\|\nabla f_{\bm{S}_{[l]}}(\bm{V}_{\min(\mathcal{N}_{l})})\|_{F}^{2} \leq
		\sum_{l=0}^{\infty} \gamma_{\min(\mathcal{N}_{l})}\|\nabla f_{\bm{S}_{[l]}}(\bm{V}_{\min(\mathcal{N}_{l})})\|_{F}^{2} \\ 
		& \leq \frac{f(\bm{U}_{0}) - \min_{\bm{U} \in \St(p,N)} f(\bm{U})}{c} < \infty,
	\end{align}
	from which we obtain
	$\lim_{l\to \infty} \|\nabla f_{\bm{S}_{[l]}}(\bm{V}_{\min(\mathcal{N}_{l})})\|_{F}^{2} = 0$.

	\ref{enum:case2}
	Recall
	$\mathcal{N}_{\widehat{l}}= [\min(\mathcal{N}_{\widehat{l}}),\infty) \cap \mathbb{N}_{0}$
	and the boundedness of
	$(\bm{V}_{n})_{n\in \mathcal{N}_{\widehat{l}}}$
	(see Lemma~\ref{lemma:bounded}).
	Recall also that
	$\mathcal{A}^{\langle \widehat{l}\rangle}$
	is a Type A algorithm and
	$\nabla f_{\bm{S}_{[\widehat{l}]}}$
	is Lipschitz continuous over
	$Q_{N,p}(\bm{S}_{[\widehat{l}]})$.
	Then, the condition (d) in Definition~\ref{definition:line_search_ALCP} ensures
	$\liminf_{n\to\infty,\\ n\in \mathcal{N}_{\widehat{l}}}\|\nabla(f\circ\Phi_{\bm{S}_{[\widehat{l}]}}^{-1})(\bm{V}_{n})\|_{F} = 0$.
\end{proof}

\begin{remark}[The existence of $(\gamma_{n})_{n=0}^{\infty}$ satisfying $\underline{\gamma} > 0$ in Theorem~\ref{theorem:convergence}] \label{remark:stepsize}
		Recall that, for every
		$l \in \mathbb{N}_{0}$,
		the Lipschitz continuity of
		$\nabla f_{\bm{S}_{[l]}}$
		on
		$Q_{N,p}(\bm{S}_{[l]})$
		with a Lipschitz constant
		$L > 0$
		ensures
		\begin{equation}
			(\bm{V} \in Q_{N,p}(\bm{S}_{[l]}), \gamma \in \mathbb{R}) \quad
			f_{\bm{S}_{[l]}}(\bm{V}-\gamma\nabla f_{\bm{S}_{[l]}}(\bm{V})) \leq f_{\bm{S}_{[l]}}(\bm{V}) + \left(\frac{\gamma^{2}L}{2} - \gamma\right)\|\nabla f_{\bm{S}_{[l]}}(\bm{V})\|_{F}^{2}.
		\end{equation}
		Then, we can choose
		$\gamma_{\min(\mathcal{N}_{l})} \in \left(0,\frac{2(1-c)}{L}\right]$
		satisfying the Armijo condition~\eqref{eq:Armijo} with some
		$c \in (0,1)$
		via the following inequality
		\begin{align}
			& f_{\bm{S}_{[l]}}(\bm{V}_{\min(\mathcal{N}_{l})}-\gamma_{\min(\mathcal{N}_{l})}\nabla f_{\bm{S}_{[l]}}(\bm{V}_{\min(\mathcal{N}_{l})})) \\
			& \leq
			f_{\bm{S}_{[l]}}(\bm{V}_{\min(\mathcal{N}_{l})}) + \left(\frac{\gamma_{\min(\mathcal{N}_{l})}^{2}L}{2} - \gamma_{\min(\mathcal{N}_{l})}\right)\|\nabla f_{\bm{S}_{[l]}}(\bm{V}_{\min(\mathcal{N}_{l})})\|_{F}^{2} \\
			& \leq f_{\bm{S}_{[l]}}(\bm{V}_{\min(\mathcal{N}_{l})}) + c\gamma_{\min(\mathcal{N}_{l})} \inprod{\nabla f_{\bm{S}_{[l]}}(\bm{V}_{\min(\mathcal{N}_{l})})}{-\nabla f_{\bm{S}_{[l]}}(\bm{V}_{\min(\mathcal{N}_{l})})},
		\end{align}
		where
		$\bm{D}_{\min(\mathcal{N}_{l})} = - \nabla f_{\bm{S}_{[l]}}(\bm{V}_{\min(\mathcal{N}_{l})})$ (see Definition~\ref{definition:line_search_ALCP}~(a)).
		Since the Lipschitz constant
		$L$
		of
		$\nabla f_{\bm{S}}$
		is common for any
		$\bm{S} \in {\rm O}(N)$
		by the assumption in Theorem~\ref{theorem:convergence}, there exists
		$(\gamma_{n})_{n=0}^{\infty}$
		satisfying simultaneously the Armijo condition~\eqref{eq:Armijo} and
		$ \underline{\gamma} > 0$.
		Such
		$(\gamma_{n})_{n=0}^{\infty}$
		can be generated by the standard backtracking algorithm (see, e.g.,~\cite{Nocedal-Wright06,Andrei20}).
\end{remark}

\subsection{Similar framework using retraction} \label{sec:retraction}
As a standard strategy for Problem~\ref{problem:origin}, {\it a retraction-based strategy}~\cite{Absil-Mahony-Sepulchre08} has been developed with the so-called {\it retraction}
$R:T\St(p,N)\to \St(p,N):(\bm{U},\bm{\mathcal{V}}) \mapsto R_{\bm{U}}(\bm{\mathcal{V}})$
satisfying
the restriction
$R_{\bm{U}}$
of
$R$
to
$T_{\bm{U}}\St(p,N)$
for
$\bm{U} \in \St(p,N)$
is differentiable, and
\begin{align}
	 \quad (\bm{U}\in\St(p,N)) & \quad R_{\bm{U}}(\bm{0}) = \bm{U} \\
	 \quad  (\bm{U} \in \St(p,N), \bm{\mathcal{V}} \in T_{\bm{U}}\St(p,N)) & \quad \mathrm{D}R_{\bm{U}}(\bm{0})[\bm{\mathcal{V}}] = \bm{\mathcal{V}}, \label{eq:retraction_2}
\end{align}
where
$T\St(p,N):= \bigcup_{\bm{U}\in\St(p,N)} \{\bm{U}\}\times T_{\bm{U}}\St(p,N)$
is the tangent bundle of
$\St(p,N)$
and
$T_{\bm{U}}\St(p,N):=\{\bm{\mathcal{D}} \in \mathbb{R}^{N\times p} \mid \bm{U}^{\TT}\bm{\mathcal{D}}+\bm{\mathcal{D}}^{\TT}\bm{U} = \bm{0}\}$
is the tangent space at
$\bm{U}\in \St(p,N)$
to
$\St(p,N)$.
Several retractions have been introduced for
$\St(p,N)$,
e.g., the Riemannian exponential mapping for
$\St(p,N)$,
the QR decomposition-based retraction, the polar decomposition-based retraction~\cite{Absil-Mahony-Sepulchre08}, and the Cayley transform-based retraction~\cite{Wen-Yin13}.

For every retraction, the G\^{a}teaux derivative of
$f\circ R_{\bm{U}}$
at
$\bm{0}  \in T_{\bm{U}}\St(p,N)$
along
$\bm{\mathcal{D}} \in T_{\bm{U}}\St(p,N)$
with
$\bm{U} \in \St(p,N)$
implies
\begin{align}
	& (t \in \mathbb{R}) \quad f(R_{\bm{U}}(t\bm{\mathcal{D}})) = f\circ R_{\bm{U}}(\bm{0}) + t \mathrm{D}(f\circ R_{\bm{U}})(\bm{0})[\bm{\mathcal{D}}] + o(t) \\
	& = f(\bm{U}) + t \mathrm{D}f(R_{\bm{U}}(\bm{0}))[\mathrm{D}R_{\bm{U}}(\bm{0})[\bm{\mathcal{D}}]] + o(t)
	= f(\bm{U}) + t \mathrm{D}f(\bm{U})[\bm{\mathcal{D}}] + o(t)\ (\because~\eqref{eq:retraction_2}) \\
	& = f(\bm{U}) + t \inprod{\mathop{\mathrm{grad}}f(\bm{U})}{\bm{\mathcal{D}}}_{\bm{U}}  + o(t), \label{eq:approximation} 
\end{align}
where
$o(\cdot)$
denotes Landau's little-o notation, i.e.,
$\lim_{t \to 0} |o(t)/t|= 0$,
$\inprod{\cdot}{\cdot}_{\bm{U}}$
denotes an inner product on
$T_{\bm{U}}\St(p,N)$,
and
$\mathop{\mathrm{grad}}f(\bm{U})\in T_{\bm{U}}\St(p,N)$
denotes the Riemannian gradient\footnote{
	The Riemannian gradient is defined uniquely under a given inner product
	$\inprod{\cdot}{\cdot}_{\bm{U}}$
	on
	$T_{\bm{U}}\St(p,N)$
	as a vector
	$\mathop{\mathrm{grad}}f(\bm{U}) \in T_{\bm{U}}\St(p,N)$
	satisfying
	$\mathrm{D}f(\bm{U})[\bm{\mathcal{D}}] = \inprod{\mathrm{grad}f(\bm{U})}{\bm{\mathcal{D}}}_{\bm{U}}$
	for all
	$\bm{\mathcal{D}} \in T_{\bm{U}}\St(p,N)$ (see, e.g.,\cite{Boumal20}).
	In particular, under the so-called canonical inner product~\cite{Edelman-Arias-Smith98} for
	$\St(p,N)$ defined as
	$\inprod{\bm{\mathcal{D}}_{1}}{\bm{\mathcal{D}}_{2}}_{\bm{U}} := \trace\left(\bm{\mathcal{D}}_{1}^{\TT}\left(\bm{I}-\frac{1}{2}\bm{U}\bm{U}^{\TT}\right)\bm{\mathcal{D}}_{2}\right)\ (\bm{\mathcal{D}}_{1},\bm{\mathcal{D}}_{2}\in T_{\bm{U}}\St(p,N))$,
	we can compute
	$\mathop{\mathrm{grad}}f(\bm{U}) (= \nabla (f\circ R_{\bm{U}})(\bm{0})) = \nabla f(\bm{U}) - \bm{U}\nabla f(\bm{U})^{\TT}\bm{U}$~\cite{Wen-Yin13},
	where
	$\nabla f(\bm{U}) \in \mathbb{R}^{N\times p}$
	is the gradient of
	$f$
	at
	$\bm{U}$
	under the Euclidean inner product.
}.
Via~\eqref{eq:approximation}, in the retraction-based strategy, we find a descent direction
$\bm{\mathcal{D}}_{n} \in T_{\bm{U}_{n}}\St(p,N)$
satisfying
$\inprod{\mathop{\mathrm{grad}}f(\bm{U}_{n})}{\bm{\mathcal{D}}_{n}}_{\bm{U}_{n}} < 0$
like Type A algorithms (see Definition~\ref{definition:line_search}~(b)), and update an estimate
$\bm{U}_{n+1} := R_{\bm{U}_{n}}(\bm{0}+\gamma_{n}\bm{\mathcal{D}}_{n}) \in \St(p,N)$,
with a stepsize
$\gamma_{n} >0$,
of a stationary point
$\bm{U}^{\star} \in \St(p,N)$,
i.e.,
$\mathrm{grad} f(\bm{U}^{\star}) = \bm{0}$~\cite{Wen-Yin13,Gao-Liu-Chen-Yuan18},
of Problem~\ref{problem:origin} (see Remark~\ref{remark:stationary}~(a)).
Along this strategy, several Euclidean optimization algorithms have been extended for Problem~\ref{problem:origin} (e.g., the gradient descent method~\cite{Edelman-Arias-Smith98,Nikpour02,Nishimori-Akaho05,Abrudan-Eriksson-Koivunen,Absil-Mahony-Sepulchre08}, Newton's method~\cite{Edelman-Arias-Smith98,Manton15}, and the trust-region method~\cite{Absil-Baker-Gallivan07,Kasai-Mishra18}).

Euclidean optimization algorithms exploiting information about the past iterates (e.g., the past search directions) are known to enjoy fast numerical convergence properties.
Such algorithms include, e.g., the quasi-Newton method~\cite{Nocedal-Wright06}, the conjugate gradient method~\cite{Andrei20,Gilbert-Nocedal92,Al-baali85,Dai-Yuan99,Dai-etal00,Dai-Yuan01,Hager-Zhang05}, the three-term conjugate gradient method~\cite{Zhang-Zhou-Li07,Narushima-Yabe-Ford11,Khoshsimaye-Ashrafi23},  and the accelerated gradient method~\cite{Nesterov83,Ghadimi-Lan16,Carmon-Duchi-Hinder-Sidford18,Zeyuan18,Diakonikolas-Jordan21}.
Extensions of such algorithms along the retraction-based strategy are not simple because the past search directions
$(\bm{\mathcal{D}}_{k})_{k=0}^{n-1}$
are designed on the past tangent spaces, implying thus
$(\bm{\mathcal{D}}_{k})_{k=0}^{n-1}$
can not be utilized directly on the current tangent space
$T_{\bm{U}_{n}}\St(p,N)$.
To resolve this issue, the so-called {\it vector transport}~\cite{Absil-Mahony-Sepulchre08} have been used for a translation of
$(\bm{\mathcal{D}}_{k})_{k=0}^{n-1}$
into
$T_{\bm{U}_{n}}\St(p,N)$,
and some conjugate gradient methods~\cite{Ring-Wirth12,Sato-Iwai15,Zhu17,Zhu-Sato20,Sakai-Iiduka20,Sato21B,Sakai-Iiduka-Sato23} and the quasi-Newton method~\cite{Ring-Wirth12, Huag-Gallivan-Absil15} have been extended with a vector transport.
However, there still remain many Euclidean optimization algorithms (e.g., HS+, PRP+, and LS+-type conjugate gradient methods~\cite{Andrei20}, three-term conjugate gradient methods~\cite{Zhang-Zhou-Li07,Narushima-Yabe-Ford11,Khoshsimaye-Ashrafi23}, and accelerated gradient methods~\cite{Ghadimi-Lan16,Carmon-Duchi-Hinder-Sidford18,Zeyuan18,Diakonikolas-Jordan21}), with a lot of potential, for which any practical way does not seem to have been reported for extensions along the retraction-based strategy with vector transports.

Recently, as an instance of adaptive parametrization strategies with a retraction, {\it a dynamic trivialization} for Problem~\ref{problem:origin} has been introduced, e.g.,~\cite{Lezcano19,Criscitiello-Boumal19,Lezcano20,Criscitiello-Boumal22}, for direct utilization of Euclidean optimization algorithms.
A motivation of the dynamic trivialization seems to be common as that of the ALCP strategy in Section~\ref{sec:adaptive}, which is available to enjoy Euclidean optimization algorithms as far as not facing any computational difficulty or any performance degradation.
In a way similar to the ALCP strategy, Problem~\ref{problem:origin} can be tackled by reformulating as
\begin{equation} \label{eq:trivialization}
	{\rm find}\ (\bm{\mathcal{V}}^{\star},\bm{U}^{\star}) \in T_{\bm{U}^{\star}}\St(p,N)\times \St(p,N)\ {\rm s.t.}\ \|\nabla (f\circ R_{\bm{U}^{\star}}) (\bm{\mathcal{V}}^{\star})\| = 0,
\end{equation}
corresponding to Problem~\ref{problem:ALCP_grad}.
By fixing a tangent point
$\bm{U} \in \St(p,N)$
of
$T_{\bm{U}}\St(p,N)$
for
$R_{\bm{U}}$,
the dynamic trivialization~\cite{Lezcano19,Criscitiello-Boumal19,Lezcano20,Criscitiello-Boumal22} updates an estimate
$\bm{\mathcal{V}}_{n} \in T_{\bm{U}}\St(p,N)$
of a stationary point
$\bm{\mathcal{V}}^{\star}$
of
$f\circ R_{\bm{U}}$
with a Euclidean optimization algorithm.
However, as
$\bm{\mathcal{V}}_{n}$
deviates distant from
$\bm{0} \in T_{\bm{U}}\St(p,N)$,
difficulties for finding a stationary point of
$f\circ R_{\bm{U}}$
likely appear (see Remark~\ref{remark:difficulty_trivialization} for the difficulties).
Just after facing such a difficulty at
$\bm{\mathcal{V}}_{n} \in T_{\bm{U}}\St(p,N)$,
the tangent point
$\bm{U}$
is changed to
$\bm{U}':= R_{\bm{U}}(\bm{\mathcal{V}}_{n}) \in \St(p,N)$
in order to mitigate the difficulty, and repeat the update of
$\bm{\mathcal{V}}_{n+k} \in T_{\bm{U}'}\St(p,N)\ (k\geq1)$
for finding a stationary point of
$f\circ R_{\bm{U}'}$
over
$T_{\bm{U}'}\St(p,N)$
until facing further difficulties.
In the following, we discuss distinctions between the dynamic trivialization and the ALCP strategy.

\begin{remark}[Difficulties facing at $\bm{\mathcal{V}}_{n}\in T_{\bm{U}}\St(p,N)$ deviates distant from $\bm{0}$] \label{remark:difficulty_trivialization}
	\mbox{}
\begin{enumerate}[label=(\alph*)]
	\item
		There is a risk caused by the fact that the diffeomorphism of
		$R_{\bm{U}}$
		is guaranteed only within some open ball centered at
		$\bm{0} \in T_{\bm{U}}\St(p,N)$~\cite{Absil-Mahony-Sepulchre08}.
		The violation of diffeomorphism of
		$R_{\bm{U}}$
		can introduce extra stationary points, meaning that
		$R_{\bm{U}}(\bm{\mathcal{V}}_{n}) \in \St(p,N)$
		is not a stationary point of Problem~\ref{problem:origin} even if
		$(\bm{\mathcal{V}}_{n},\bm{U})$
		is a solution of the problem~\eqref{eq:trivialization}~\cite{Lezcano19,Lezcano20}.
		In contrast, for every
		$\bm{S} \in {\rm O}(N)$,
		$\Phi_{\bm{S}}^{-1}$
		in the proposed parametrization is diffeomorphic over
		$Q_{N,p}(\bm{S})$
		entirely.
	\item
		There exists a risk of loosing desired satisfactions of required conditions for Euclidean optimization algorithms applied to
		$f\circ R_{\bm{U}}$
		over
		$T_{\bm{U}}\St(p,N)$.
		Such conditions include, e.g., the Lipschitz continuity of
		$\nabla (f\circ R_{\bm{U}})$~\cite{Criscitiello-Boumal19,Lezcano20,Criscitiello-Boumal22} is guaranteed only on some open ball centered at
		$\bm{0} \in T_{\bm{U}}\St(p,N)$
		in general even if
		$\nabla f$
		is Lipschitz continuous on
		$\St(p,N)$.
		In contrast, for every
		$\bm{S} \in {\rm O}(N)$,
		$\nabla (f\circ\Phi_{\bm{S}}^{-1})$
		in the proposed parametrization is Lipschitz continuous over
		$Q_{N,p}(\bm{S})$
		entirely if
		$\nabla f$
		is Lipschitz continuous on
		$\St(p,N)$
		(see Fact~\ref{fact:Lipschitz}).
	\item
		If the retraction
		$R_{\bm{U}}$
		is not a surjection onto
		$\St(p,N)$,
		then the existence of a stationary point of
		$f\circ R_{\bm{U}}$
		in
		$T_{\bm{U}}\St(p,N)$
		may not be guaranteed.
		This difficulty corresponds to the existence of the singular-point set
		$E_{N,p}(\bm{S})$
		for
		$\Phi_{\bm{S}}$ (see~\eqref{eq:singular}).
\end{enumerate}

\end{remark}

\subsubsection{Dynamic trivialization of ${\rm O}(N)$ with the Cayley transform-based retraction} \label{sec:Cayley_retraction}
	For the problem~\eqref{eq:trivialization} with
	$p=N$,
	i.e.,
	in case of
	$\St(N,N) = {\rm O}(N)$,
	a special instance of the dynamic
	trivialization has been proposed in~\cite{Lezcano19} with the Cayley transform-based retraction~\cite{Wen-Yin13}
	\begin{equation}
		(\bm{\mathcal{V}} \in T_{\bm{U}}\St(p,N)) \ R_{\bm{U}}^{\rm Cay}(\bm{\mathcal{V}}) := \varphi^{-1}\left(\Skew\left(\bm{U}\bm{\mathcal{V}}^{\TT}\left(\bm{I}-\frac{1}{2}\bm{U}\bm{U}^{\TT}\right)\right)\right)\bm{U},
	\end{equation}
	where
	$\varphi^{-1}$
	is defined in~\eqref{eq:ICT_O}.
	In the dynamic trivialization with
	$R_{\bm{U}}^{\rm Cay}$
	for optimization over
	${\rm O}(N)$
	in~\cite{Lezcano19},
	the tangent point
	$\bm{U}$
	for
	$R_{\bm{U}}^{\rm Cay}$
	is changed after constant number, say
	$\kappa \in \mathbb{N}$,
	of updates of
	$\bm{\mathcal{V}}_{n}$
	mainly because of difficulty in Remark~\ref{remark:difficulty_trivialization}~(c).
	This dynamic trivialization can be seen as a special instance of the ALCP strategy in Algorithm~\ref{alg:ALCP} with a simple alarming condition, in line~\ref{lst:line:if}, which is satisfied at
	$n=\kappa + \nu (= \kappa + \min(\mathcal{N}_{l}))$
	because
	$R_{\bm{U}}^{\rm Cay}$
	can be expressed in terms of
	$\Phi_{[\bm{U}\ \bm{U}_{\perp}]}^{-1}$~\cite{Kume-Yamada22}
	as
	\begin{equation} \label{eq:Cayley_retraction}
		(\bm{\mathcal{V}} \in T_{\bm{U}}\St(p,N)) \ R_{\bm{U}}^{\rm Cay}(\bm{\mathcal{V}})
		=\Phi_{[\bm{U}\ \bm{U}_{\perp}]}^{-1} \circ \Psi_{[\bm{U}\ \bm{U}_{\perp}]}(\bm{\mathcal{V}}),
	\end{equation}
	where
	$\bm{U}_{\perp} \in \St(N-p,N)$
	satisfies
	$\bm{U}^{\TT}\bm{U}_{\perp} = \bm{0}$
	and
	\begin{equation}
		\Psi_{[\bm{U}\ \bm{U}_{\perp}]}:T_{\bm{U}}\St(p,N) \to Q_{N,p}([\bm{U}\ \bm{U}_{\perp}]):\bm{\mathcal{V}}\mapsto
					-\frac{1}{2}\begin{bmatrix} \bm{U}^{\TT}\bm{\mathcal{V}} & - (\bm{U}_{\perp}^{\TT}\bm{\mathcal{V}})^{\TT} \\ \bm{U}_{\perp}^{\TT}\bm{\mathcal{V}} & \bm{0}\end{bmatrix} \label{eq:tangent_to_Q}
	\end{equation}
	is an invertible linear operator with its inversion
	$\Psi_{[\bm{U}\ \bm{U}_{\perp}]}^{-1}:Q_{N,p}([\bm{U}\ \bm{U}_{\perp}]) \to T_{\bm{U}}\St(p,N):\bm{V}\mapsto -2[\bm{U}\ \bm{U}_{\perp}]\bm{V}\bm{I}_{N\times p}$.

	By comparison to the dynamic trivialization with
	$R_{\bm{U}}^{\rm Cay}$
	for the problem~\eqref{eq:trivialization} with
	$p = N$
	in~\cite{Lezcano19}, the ALCP strategy has mainly three advantages:
	(i) although any convergence analysis for the dynamic trivialization with
	$R_{\bm{U}}^{\rm Cay}$
	does not seem to have been reported, that for the ALCP strategy is given in Section~\ref{sec:convergence};
	(ii) the ALCP strategy can be applied to Problem~\ref{problem:ALCP_grad} for general
	$p \leq N$;
	(iii) the changing condition for
	$\bm{S} \in {\rm O}(N)$
	can be designed more flexibly, e.g., by exploiting the condition~\eqref{eq:change} in Algorithm~\ref{alg:ALCP}, than the changing condition for the tangent point
	$\bm{U}$
	in
	$R_{\bm{U}}$,
	i.e., the change is performed after prefixed number of updates of
	$\bm{\mathcal{V}}_{n}$
	for estimating
	$\bm{\mathcal{V}}^{\star}$.
	Indeed, the latter naive changing condition in~\cite{Lezcano19} does not seem to enjoy maximally the potential of each tangent space
	$T_{\bm{U}}\St(p,N)$
	where the Euclidean optimization algorithm is executed for minimization of
	$f\circ R_{\bm{U}}$.

\subsubsection{Dynamic trivialization with the Riemannian Exponential mapping} \label{sec:exponential}
For finding a stationary point of optimization over a general Riemannian manifold, the dynamic trivialization with the Riemannian exponential mapping has been proposed in~\cite{Criscitiello-Boumal19,Lezcano20,Criscitiello-Boumal22}.
Since any Riemannian manifold has the unique Riemannian exponential mapping, the dynamic trivialization with Riemannian exponential mapping can be applied to optimization over any Riemannian manifold.
The Riemannian exponential mapping with the so-called canonical metric for
$\St(p,N)$
is given~\cite{Edelman-Arias-Smith98} as
\begin{equation}
	(\bm{U}\in \St(p,N), \bm{\mathcal{D}} \in T_{\bm{U}}\St(p,N)) \ 
	\mathrm{Exp}_{\bm{U}}(\bm{\mathcal{D}}) :=
	\begin{bmatrix} \bm{U} & \bm{Q}  \end{bmatrix}
	\exp_{\mathrm{m}}
	\begin{bmatrix} \bm{U}^{\TT}\bm{\mathcal{D}} & -\bm{R}^{\TT} \\ \bm{R} & \bm{0} \end{bmatrix} 
	\bm{I}_{N\times p},
\end{equation}
where
$\bm{Q}\bm{R} = (\bm{I}-\bm{U}\bm{U}^{\TT})\bm{\mathcal{D}}$
is the QR decomposition and
$\exp_{\mathrm{m}}(\bm{X}):=\sum_{k=0}^{\infty} \bm{X}^{k}/k!$,
for
$\bm{X} \in \mathbb{R}^{2p\times 2p}$,
is the matrix exponential mapping.

In the dynamic trivialization with
$\mathrm{Exp}_{\bm{U}}$
for the problem~\eqref{eq:trivialization}, the tangent point
$\bm{U} \in \St(p,N)$
is changed if the current estimate
$\bm{\mathcal{\bm{V}}}_{n} \in T_{\bm{U}}\St(p,N)$
deviates from a closed ball in
$T_{\bm{U}}\St(p,N)$
centered at
$\bm{0} \in T_{\bm{U}}\St(p,N)$
with a radius
$r > 0$
designed according to
$\St(p,N)$
because of difficulties in Remark~\ref{remark:difficulty_trivialization}~(a) and (b)~\cite{Criscitiello-Boumal19,Lezcano20,Criscitiello-Boumal22}.

Comparisons with the dynamic trivialization with
$\mathrm{Exp}_{\bm{U}}$
and the ALCP strategy are summarized as follows:
\begin{enumerate}[label=(\roman*)]
	\item
		Convergence analyses of the existing dynamic trivializations with
		$\mathrm{Exp}_{\bm{U}}$
		have been reported for very limited cases of Euclidean optimization algorithms (e.g., the gradient descent method~\cite{Lezcano20}, and an accelerated gradient method~\cite{Criscitiello-Boumal22}).
		In contrast, a convergence analysis, in Section~\ref{sec:convergence}, of the proposed ALCP strategy can be applied, in a unified way, to a wider class of Euclidean optimization algorithms.
	\item
		In the dynamic trivialization with
		$\mathrm{Exp}_{\bm{U}}$,
		the computation of the gradient
		$\nabla (f\circ\mathrm{Exp}_{\bm{U}})$
		requires the derivative of
		$\mathrm{Exp}_{\bm{U}}$,
		which has been computed by iterative algorithms~\cite{Al-Higham09} because its closed-form expression has not been known.
		In contrast, the gradient
		$\nabla (f\circ\Phi_{\bm{S}}^{-1})$
		can be computed within finite matrix calculations~\cite{Kume-Yamada22} (see Fact~\ref{fact:gradient}).
\end{enumerate}

\section{Numerical experiments}\label{sec:numerical}
We demonstrate the performance of the proposed ALCP strategy in Algorithm~\ref{alg:ALCP} by numerical experiments.
We implemented Algorithm~\ref{alg:ALCP} incorporating Type A algorithms with the alarming condition~\eqref{eq:change} ($T=1.5$) in MATLAB.
For comparisons, we used retraction-based optimization algorithms implemented in Manopt~\cite{Boumal-Mishra-Absil-Sepulchre14}, a MATLAB toolbox for Riemannian optimization.
As a retraction, we used the QR decomposition-based retraction~\cite{Absil-Mahony-Sepulchre08} because this is known as the most computationally efficient retraction~\cite{Zhu17}.
All the experiments were performed in MATLAB on MacBook Pro (13-inch, M1, 2020) with 16GB of RAM.

We used the standard backtracking algorithm, e.g.,~\cite{Nocedal-Wright06}, for both algorithms to estimate a stepsize satisfying the Armijo condition~\eqref{eq:Armijo_Euclidean}.
Algorithm~\ref{alg:backtracking} illustrates the backtracking algorithm for a given differentiable
$J:\mathcal{X}\to \mathbb{R}$
defined over a Euclidean space
$\mathcal{X}$.
In Algorithm~\ref{alg:ALCP} incorporating Type A algorithms, we used Algorithm~\ref{alg:backtracking} with
$J:=f\circ\Phi_{\bm{S}_{[l]}}^{-1}$,
$\mathcal{X}:=Q_{N,p}(\bm{S}_{[l]})$,
$\bm{x}:=\bm{V}_{n}$,
and
$\bm{d}:=\bm{D}_{n} \in Q_{N,p}(\bm{S})$
at $n$th iteration.
In the retraction-based strategy, we used Algorithm~\ref{alg:backtracking} with
$J:=f\circ R_{\bm{U}_{n}}$,
$\mathcal{X}:= T_{\bm{U}_{n}}\St(p,N)$,
$\bm{x}:= \bm{0}$,
and
$\bm{d}:=\bm{\mathcal{D}}_{n} \in T_{\bm{U}_{n}}\St(p,N)$
at $n$th iteration, where
$\bm{U}_{n} \in \St(p,N)$
is the $n$th estimate of a solution to Problem~\ref{problem:origin}.
For Algorithm~\ref{alg:backtracking}, we used the default parameters, employed in Manopt, as
$c = 2^{-13},\rho = 0.5$,
and an initial stepsize at $n$th iteration as
\begin{equation}
	\gamma_{\rm initial} :=
	\begin{cases}
		\frac{1}{\|\nabla J(\bm{x}_{0})\|_{F}} & (n = 0)\\
		\frac{4(J(\bm{x}_{n})-J(\bm{x}_{n-1}))}{\inprod{\nabla J(\bm{x}_{n})}{\bm{d}_{n}}} & (n > 0).
	\end{cases}
\end{equation}

The iterations of Algorithm~\ref{alg:ALCP} and the retraction-based algorithms were terminated when
$\frac{\|\bm{G}_{n}\|_{F}}{\|\bm{G}_{0}\|_{F}} < 10^{-5} \quad {\rm or} \quad n=2000$
were achieved, where
\begin{equation} \label{eq:gradient_norm}
	\bm{G}_{n} := \begin{cases} \nabla (f\circ\Phi_{\bm{S}_{[l]}}^{-1})(\bm{V}_{n}) & \textrm{(Algorithm~\ref{alg:ALCP})}\\
	\mathop{\mathrm{grad}}f(\bm{U}_{n}) & \textrm{(retraction-based algorithm)} \end{cases}
\end{equation}

\begin{algorithm}[t]
	\caption{Backtracking algorithm}
	\label{alg:backtracking}
	\begin{algorithmic}
		\Require
		$c\in (0,1),\ \rho \in (0, 1),\ \gamma_{\rm initial} > 0, \ \bm{x}, \bm{d} \in \mathcal{X}$
		\State
			$\gamma \leftarrow \gamma_{\rm initial}$
			\While{$J(\bm{x} + \gamma \bm{d}) > J(\bm{x}) + c\gamma \inprod{\nabla J(\bm{x})}{\bm{d}} $}
				\State
				$\gamma \leftarrow \rho \gamma$
			\EndWhile
		\Ensure
		$\gamma$
	\end{algorithmic}
\end{algorithm}

\subsection{Avoidance of the singular-point issue}
In this subsection, we tested how dramatically the changing scheme of center points in the ALCP strategy can improve the convergence speed of the naive CP strategy by considering the following toy problem:
\begin{equation}
	\mathop{\mathrm{minimize}}_{\bm{U}\in \St(p,N)} f_1(\bm{U}):=\frac{1}{2}\|\bm{U}-\bm{U}^{\star}\|_{F}^{2}, \label{eq:toyproblem}
\end{equation}
where
$\bm{U}^{\star} \in \St(p,N)$
is its global minimizer.

We compared the gradient descent methods (GDM) along with the naive CP strategy~\cite{Kume-Yamada22} (CP), the proposed ALCP strategy (ALCP) in Algorithm~\ref{alg:ALCP}, and the retraction-based strategy (QR)~\cite{Absil-Mahony-Sepulchre08}.
Recall that the GDM is a simple Type A algorithm for minimization of a differentiable
$J:\mathcal{X}\to\mathbb{R}$
over the Euclidean space
$\mathcal{X}$ (see, e.g.,~\cite{Nocedal-Wright06}),
where a search direction at $n$th update is employed as
$\bm{d}_{n} = -\nabla J(\bm{x}_{n})$.
We note that the naive CP strategy is the specialization of Algorithm~\ref{alg:ALCP} by letting the condition in line~\ref{lst:line:if} be always false.

To demonstrate the improvement, we used a setting causing the singular-point issue in the naive CP strategy as
$\bm{S}:=\bm{I}$
and
$\bm{U}^{\star} := \diag(\bm{R}(127\pi/128),\bm{I}_{N-2})\bm{I}_{N\times p}$,
where
$\bm{R}(\theta):= \begin{bmatrix} \cos(\theta) & -\sin(\theta) \\ \sin(\theta) & \cos(\theta) \end{bmatrix} \in {\rm O}(2)$
is a rotation matrix.
Indeed, by
$\bm{S}_{\rm le}^{\TT}\bm{U}^{\star} = \diag(\bm{R}(127\pi/128),\bm{I}_{p-2})$,
we have
$\det(\bm{I}_{p} + \bm{S}_{\rm le}^{\TT}\bm{U}^{\star}) = 2^{p-1}(1+\cos(127\pi/128)) \approx 2^{p-1}\times 3.0\times 10^{-4}$,
implying thus
$\bm{U}^{\star}$
is close to a singular-point of
$\Phi_{\bm{S}}$.
For every trial, we generated an initial estimate randomly by MATLAB code 'orth(rand(N,p))'.

Table~\ref{table:toyproblem} illustrates average results for
$100$
trials of each algorithm with
$N=1000$
and
$p=10$
for the problem~\eqref{eq:toyproblem}.
In the table, for each output
$\bm{U}_{m} \in \St(p,N)$,
'fval' means the value
$f(\bm{U}_{m})-f(\bm{U}^{\star})$,
'feasi' means the feasibility
$\|\bm{I}_{p}-\bm{U}_{m}^{\TT}\bm{U}_{m}\|_{F}$,
'nrmg' means the norm
$\|\bm{G}_{m}\|_{F}$ (see~\eqref{eq:gradient_norm}),
'itr' means the number of iterations, 'time' means the CPU time (s), and 'change' means the number of changing center points for the ALCP strategy.
Figure~\ref{fig:toyproblem} shows the convergence history of algorithms.
The plots show CPU time on the horizontal axis versus the value
$f(\bm{U})$
on the vertical axis.

\afterpage{
\begin{table}[t]
	\caption{Performance of each algorithm applied to the problem~\eqref{eq:toyproblem}.}
	\centering
	\footnotesize
	\csvautobooktabular{data/toyproblem_1000_10_20234711120250056.csv}
	\label{table:toyproblem}
\end{table}
\begin{figure}[h]
	\centering
	\includegraphics[clip, width=0.6\textwidth]{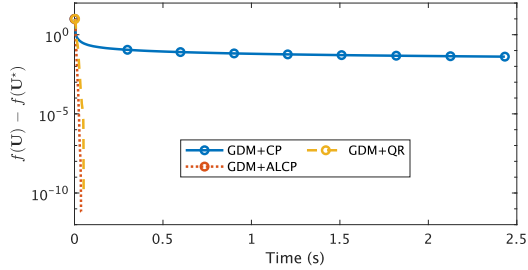}
	\caption{Convergence history of each algorithm applied to the problem~\eqref{eq:toyproblem}.
		Markers are put at every 250 iterations.}
	\label{fig:toyproblem}
\end{figure}
}

Figure~\ref{fig:toyproblem} shows that the ALCP strategy dramatically outperformed the naive CP strategy.
From Table~\ref{table:toyproblem}, the ALCP strategy changed the center point for one time.
These observations imply that the adaptive changing center points in lines~\ref{lst:line:change_start}-\ref{lst:line:change_end} of Algorithm~\ref{alg:ALCP} can improve the slow convergence of the naive CP strategy caused by the singular-point issue.
Moreover, Figure~\ref{fig:toyproblem} demonstrates that the convergence speed of the ALCP strategy was competitive with that of the retraction-based strategy.

\subsection{Comparisons to the retraction-based strategy} \label{sec:numerical_retraction}
We compared numerical performance of the ALCP strategy and the retraction-based strategy by employing three conjugate gradient methods (CGM), known to achieve numerically faster convergence than the GDM.
For minimization of a differentiable
$J:\mathcal{X}\to\mathbb{R}$
over the Euclidean space
$\mathcal{X}$
and an interval subset
$\mathcal{N} \subset \mathbb{N}_{0}$,
the CGM generates a sequence
$(\bm{x}_{n})_{n\in\mathcal{N}}$
by
\begin{equation} \label{eq:CGM}
	(n\in \mathcal{N}) \quad \bm{x}_{n+1} := \bm{x}_{n} + \gamma_{n}\bm{d}_{n}, \quad
	\bm{d}_{n+1} := -\nabla J(\bm{x}_{n+1}) + \beta_{n}\bm{d}_{n},
	\quad \bm{d}_{\min(\mathcal{N})} := -\nabla J(\bm{x}_{\min(\mathcal{N})})
\end{equation}
with a stepsize
$\gamma_{n} > 0$,
a search direction
$\bm{d}_{n} \in \mathcal{X}$,
and
$\beta_{n} \in \mathbb{R}$~\cite{Andrei20}.
Several parameters
$\beta_{n}$
have been proposed to improve convergence behavior~\cite{Andrei20,Gilbert-Nocedal92,Al-baali85,Dai-Yuan99,Dai-etal00,Dai-Yuan01,Hager-Zhang05}.
In our experiments, we employed typical parameters as
\begin{align}
	\beta_{n}^{\rm FR}:= \frac{\inprod{\bm{g}_{n+1}}{\bm{g}_{n+1}}}{\inprod{\bm{g}_{n}}{\bm{g}_{n}}},\ 
	\beta_{n}^{\rm HS+}:= \max\left\{\beta_{n}^{\rm HS},0\right\},\ 
	\beta_{n}^{\rm HZ}:= \max\{\widehat{\beta_{n}^{\rm HZ}},\zeta_{n}\},
\end{align}
where
$\bm{g}_{n}:=\nabla J(\bm{x}_{n})$,
$\bm{y}_{n}:=\bm{g}_{n+1}-\bm{g}_{n}$,
$\beta_{n}^{\rm HS}:=\frac{\inprod{\bm{g}_{n+1}}{\bm{y}_{n}}}{\inprod{\bm{d}_{n}}{\bm{y}_{n}}}$,
$\widehat{\beta_{n}^{\rm HZ}}:= \beta_{n}^{\rm HS} - 2\frac{\|\bm{y}_{n}\|^{2}\inprod{\bm{d}_{n}}{\bm{g}_{n+1}}}{(\inprod{\bm{y}_{n}}{\bm{d}_{n}})^{2}}$
and
$\zeta_{n}:= -\frac{1}{\|\bm{d}_{n}\|\min(0.01,\|\bm{g}_{n}\|)}$.
By letting strategic information at each update from
$\bm{x}_{n} \in \mathcal{X}$
to
$\bm{x}_{n+1} \in \mathcal{X}$
be
\begin{equation}
	\mathfrak{R}_{[\min(\mathcal{N}),n]}:= \begin{cases}
		-\nabla J(\bm{x}_{\min(\mathcal{N})}) & (n=\min(\mathcal{N})) \\
		\bm{d}_{n} & (n>\min(\mathcal{N})),
	\end{cases} \label{eq:record}
\end{equation}
the CGM can be seen as a special example of
$\mathcal{A}$
in~\eqref{eq:Euclidean_optimization}
in Remark~\ref{remark:Euclidean_optimization_algorithm}.

The global convergence
$\liminf_{n\to\infty}\|\nabla J(\bm{x}_{n})\|=0$
for the CGM, with
$\beta_{n}^{\rm FR}$~\cite{Al-baali85},
$\beta_{n}^{\rm HS+}$~\cite{Gilbert-Nocedal92,Dai-Yuan99},
and
$\beta_{n}^{\rm HZ}$~\cite{Hager-Zhang05},
is guaranteed
when every search direction
$\bm{d}_{n}$
satisfies the descent condition~\eqref{eq:descent} and every stepsize
$\gamma_{n}$
is chosen to satisfy the (strong) Wolfe condition, which is stronger than the Armijo condition~\eqref{eq:Armijo}.
Although there is no guarantee that the HS+-type CGM will satisfy the descent condition, a certain restart scheme makes the HS+-type CGM guarantee the descent condition.
Thus, these CGM are Type A algorithms if necessary employing such a restart scheme.

We employed FR, HS+, HZ-type CGM in Algorithm~\ref{alg:ALCP}.
Since these CGM are Type A algorithms, Theorem~\ref{theorem:convergence} guarantees the global convergence when each stepsize satisfies the (strong) Wolfe condition.
For the retraction-based strategy, these CGM have been extended~\cite{Sato21B,Sakai-Iiduka-Sato23} by exploiting {\it a vector transport}~\cite{Absil-Mahony-Sepulchre08} (see Section~\ref{sec:retraction}).
The global convergence
$\liminf_{n\to\infty} \|\mathop{\mathrm{grad}}f(\bm{U}_{n})\|_{F} = 0$
is guaranteed for the CGM with
$\beta_{n}^{\rm FR}$
when each stepsize satisfies a strong Wolfe-type condition~\cite{Sato21B}.
Although, for a general differentiable
$f$,
any global convergence for the CGM with
$\beta_{n}^{\rm HS+}$
and
$\beta_{n}^{\rm HZ}$
has not been reported even if each stepsize satisfies such a Wolfe-type condition\footnote{For a strongly convex function under a Riemannian setting, i.e., every eigenvalue of the Riemannian Hessian of $f$ is positive, the global convergence of CGM with $\beta_{n}^{\rm HZ}$ is guaranteed~\cite{Sakai-Iiduka-Sato23}. However, such a strongly convexity is restricted for applications of Problem~\ref{problem:origin}. For example, the problem~\eqref{eq:eigenbasis} with $p=1$ and $\bm{A}=\bm{I}$ violates the strongly convexity.
	Indeed, the Riemannian Hessian
	$\mathop{\mathrm{Hess}}f(\bm{U})[\bm{\mathcal{D}}] = 2(\bm{U}^{\TT}\bm{\mathcal{D}})\bm{U}$~\cite[p.96]{Boumal20}
	implies
	$\inprod{\bm{\mathcal{D}}}{\mathop{\mathrm{Hess}}f(\bm{U})[\bm{\mathcal{D}}]} = 2(\bm{U}^{\TT}\bm{\mathcal{D}})^{2} = 0$
	for any
	$\bm{\mathcal{D}} \in T_{\bm{U}}\St(1,N)$
	because every
	$\bm{\mathcal{D}} \in T_{\bm{U}}\St(1,N)$
	can be expressed as
	$\bm{\mathcal{D}}=\bm{U}_{\perp}\bm{K}$
	with some
	$\bm{K} \in \mathbb{R}^{(N-1)\times 1}$,
	where
	$\bm{U}_{\perp} \in \St(N-1,N)$
	satisfies
	$\bm{U}^{\TT}\bm{U}_{\perp} = \bm{0}$.
}, their numerical performances may be practically superior to the CGM with
$\beta_{n}^{\rm FR}$~\cite{Sato21B}.

Our test problems are (i) eigenbasis extraction, e.g.,~\cite{Absil-Mahony-Sepulchre08}; (ii) unbalanced orthogonal Procrustes problem, e.g.,~\cite{Elden-Park99}.
For each problem, we generated an initial estimate randomly by MATLAB code 'orth(rand(N,p))'.

For a given symmetric matrix
$\bm{A} \in \mathbb{R}^{N\times N}$,
the eigenbasis extraction is formulated as
\begin{equation}
	\mathop{\mathrm{minimize}}_{\bm{U}\in\St(p,N)} f_2(\bm{U}):=-\trace(\bm{U}^{\TT}\bm{A}\bm{U}). \label{eq:eigenbasis}
\end{equation}
Any solution
$\bm{U}^{\star}$
of the problem~\eqref{eq:eigenbasis} is an orthonormal eigenbasis associated with the
$p$
largest eigenvalues of
$\bm{A}$~\cite{Horn-Johonson12}.
In our experiment, we used
$\bm{A}:=\widetilde{\bm{A}}^{\TT} \widetilde{\bm{A}} \in \mathbb{R}^{N\times N}$
with randomly chosen
$\widetilde{\bm{A}} \in \mathbb{R}^{N\times N}$
of which each entry is sampled by the standard normal distribution
$\mathcal{N}(0,1)$.

For given matrices
$\bm{B} \in \mathbb{R}^{M\times N}$
and
$\bm{C} \in \mathbb{R}^{M\times p}$,
the unbalanced orthogonal Procrustes problem is formulated with
$p < N$
as
\begin{equation}
	\mathop{\mathrm{minimize}}_{\bm{U}\in \St(p,N)} f_3(\bm{U}):=\|\bm{B}\bm{U}-\bm{C}\|_{F}^{2}. \label{eq:procrustes}
\end{equation}
Any closed-form solution to the problem~\eqref{eq:procrustes} has not been found~\cite{Zhang-Yang-Shen-Ying20}.
In our experiment, we used
$M = N$,
randomly chosen
$\bm{B} \in \mathbb{R}^{N\times N}$
of which each entry is sampled by
$\mathcal{N}(0,1)$,
and
$\bm{C}:=\bm{B}\bm{U}^{\star} \in \mathbb{R}^{N\times p}$
with randomly chosen
$\bm{U}^{\star} \in \St(p,N)$
by MATLAB code 'orth(rand(N,p))'.

For
$i \in \{2,3\}$,
since we can verify easily the Lipschitz continuity of
$\nabla f_{i}$,
for every
$\bm{S} \in {\rm O}(N)$,
$\nabla (f_{i} \circ \Phi_{\bm{S}}^{-1})$
is Lipschitz continuous over
$Q_{N,p}(\bm{S})$
with a common Lipschitz constant by Fact~\ref{fact:Lipschitz}.

Tables~\ref{table:eigenbasis} and~\ref{table:procrustes} illustrate average results for
$100$
trials of each algorithm for the problems~\eqref{eq:eigenbasis} and~\eqref{eq:procrustes}.
From Tables~\ref{table:eigenbasis} and~\ref{table:procrustes}, CGM(HS+)+ALCP outperformed the others in CPU time under both scenarios.
In contrast, regarding the number of iterations, CGM(HS+)+QR outperformed CGM(HS+)+ALCP slightly.
These imply that although CGM(HS+)+ALCP needed more iterations than CGM(HS+)+QR, the former converged in less CPU time than the latter.
These tendencies can apply to the other types of the CGM+ALCP compared to the CGM+QR respectively.

\begin{table}[t]
	\caption{Performance of each algorithm applied to the problem~\eqref{eq:eigenbasis}.}
	\centering
	\footnotesize
	\csvautobooktabular{data/eigenvalue.csv}
	\label{table:eigenbasis}
\end{table}
\begin{table}[t]
	\caption{Performance of each algorithm applied to the problem~\eqref{eq:procrustes}.}
	\centering
	\footnotesize
	\csvautobooktabular{data/procrustes.csv}
	\label{table:procrustes}
\end{table}
\begin{figure}[t]
	\centering
	\subfloat[][$N=1000,p=1$]{
		\includegraphics[clip, width=0.4\columnwidth]{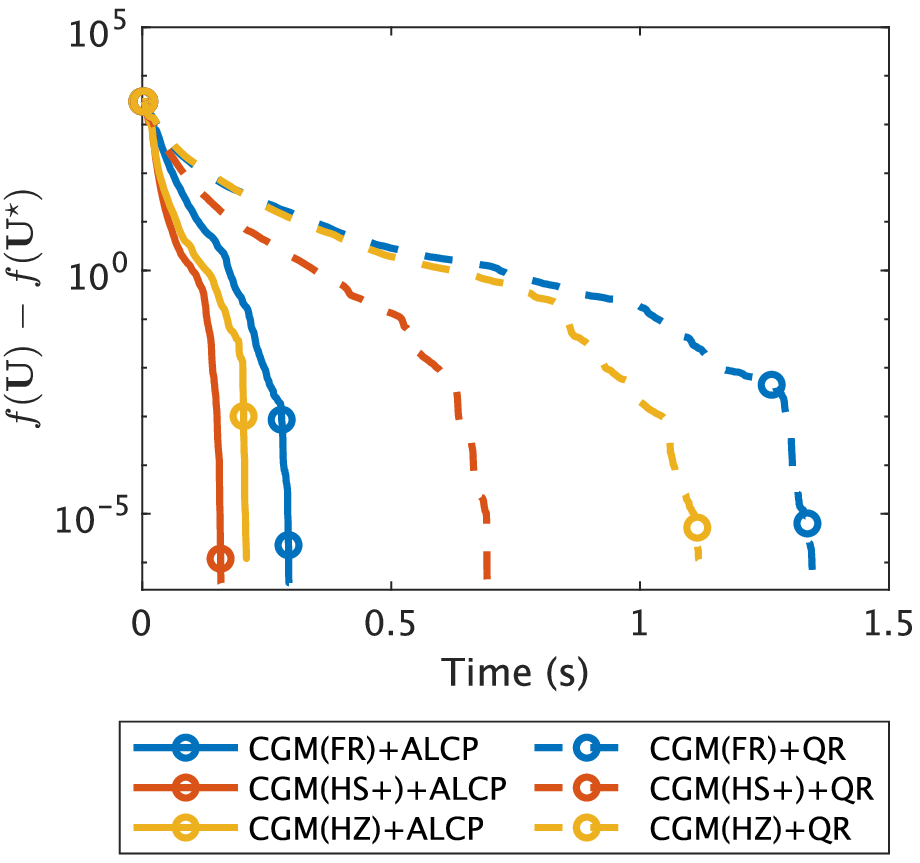}
	}
	\subfloat[][$N=1000,p=10$]{
		\includegraphics[clip, width=0.4\columnwidth]{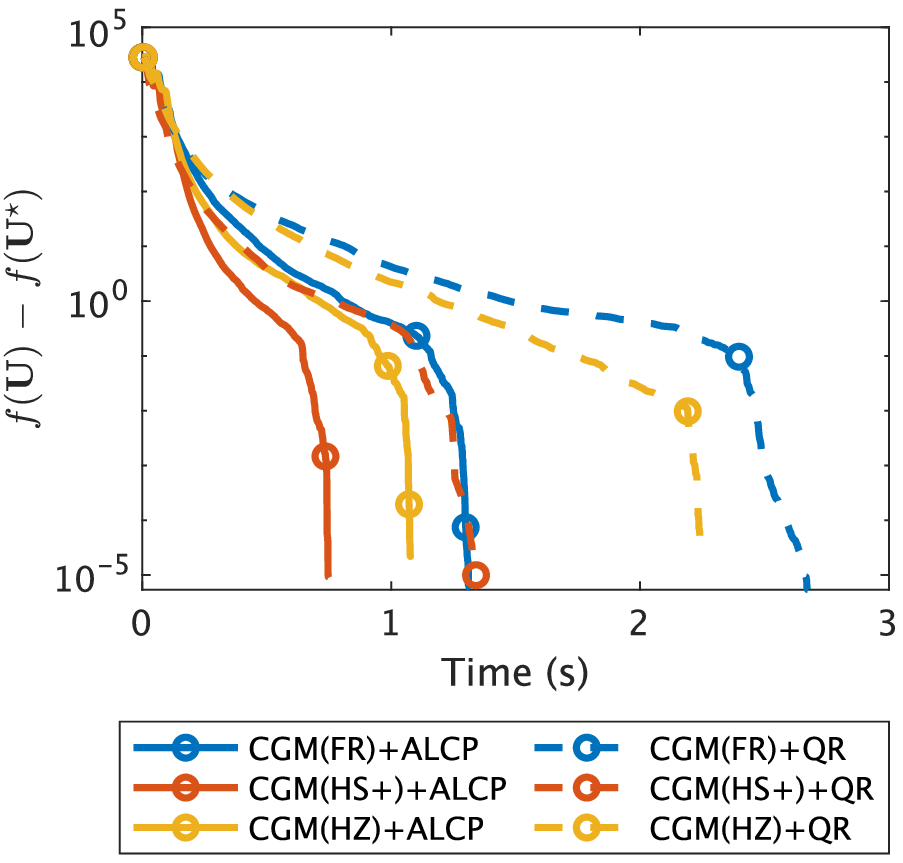}
	} \\
	\subfloat[][$N=2000,p=1$]{
		\includegraphics[clip, width=0.4\columnwidth]{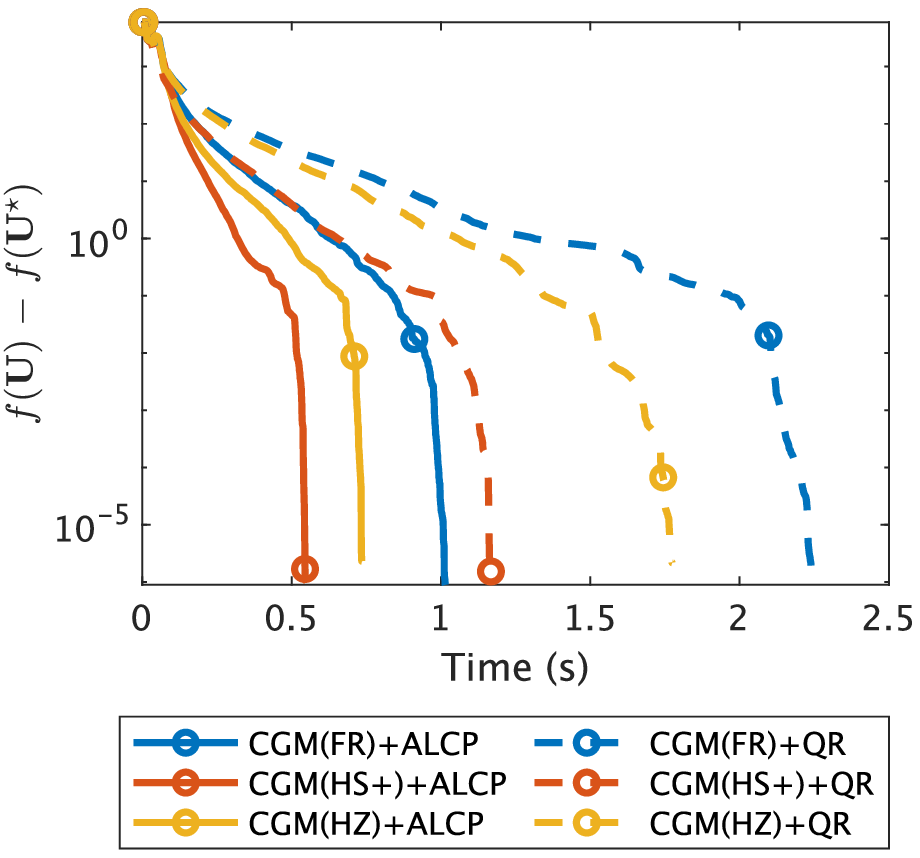}
	}
	\subfloat[][$N=2000,p=10$]{
		\includegraphics[clip, width=0.4\columnwidth]{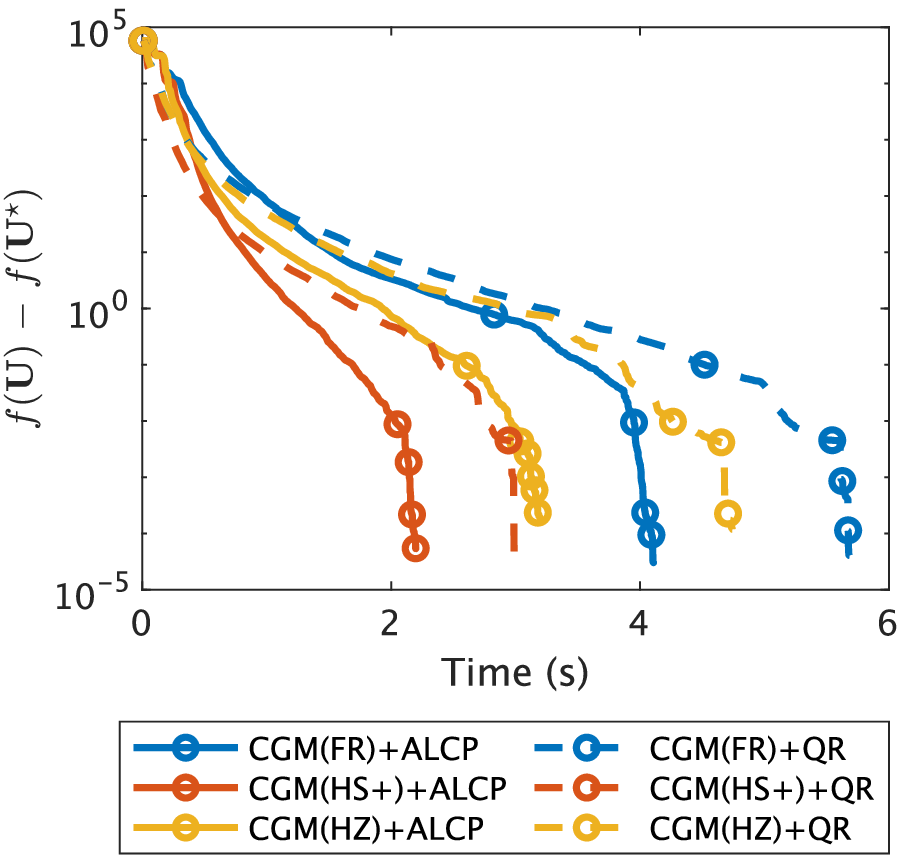}
	}
	\caption{Convergence histories of each algorithm applied to the problem~\eqref{eq:eigenbasis} regarding the value $f(\bm{U})-f(\bm{U}^{\star})$ at CPU time for each problem size, where
		$\bm{U}^{\star}\in\St(p,N)$
		was obtained by the eigenvalue decomposition of
		$\bm{A}$.
		Markers are put at every 250 iterations.}
	\label{fig:eigenbasis}
\end{figure}
\begin{figure}[t]
	\centering
	\subfloat[][$N=1000,p=1$]{
		\includegraphics[clip, width=0.4\columnwidth]{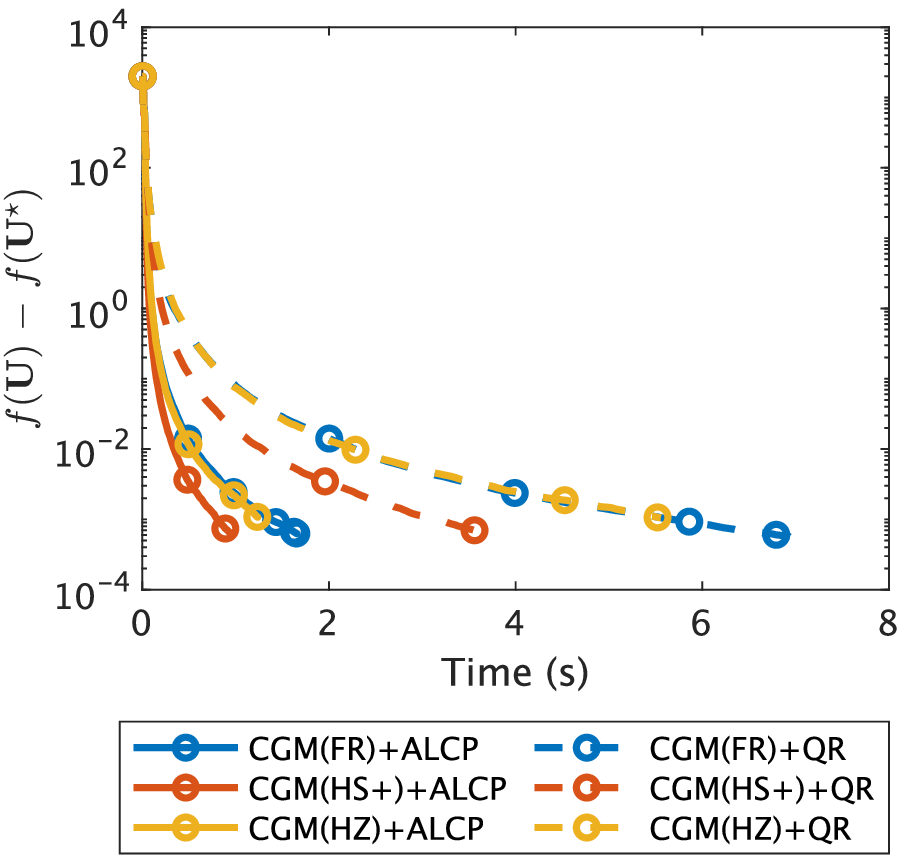}
	}
	\subfloat[][$N=1000,p=10$]{
		\includegraphics[clip, width=0.4\columnwidth]{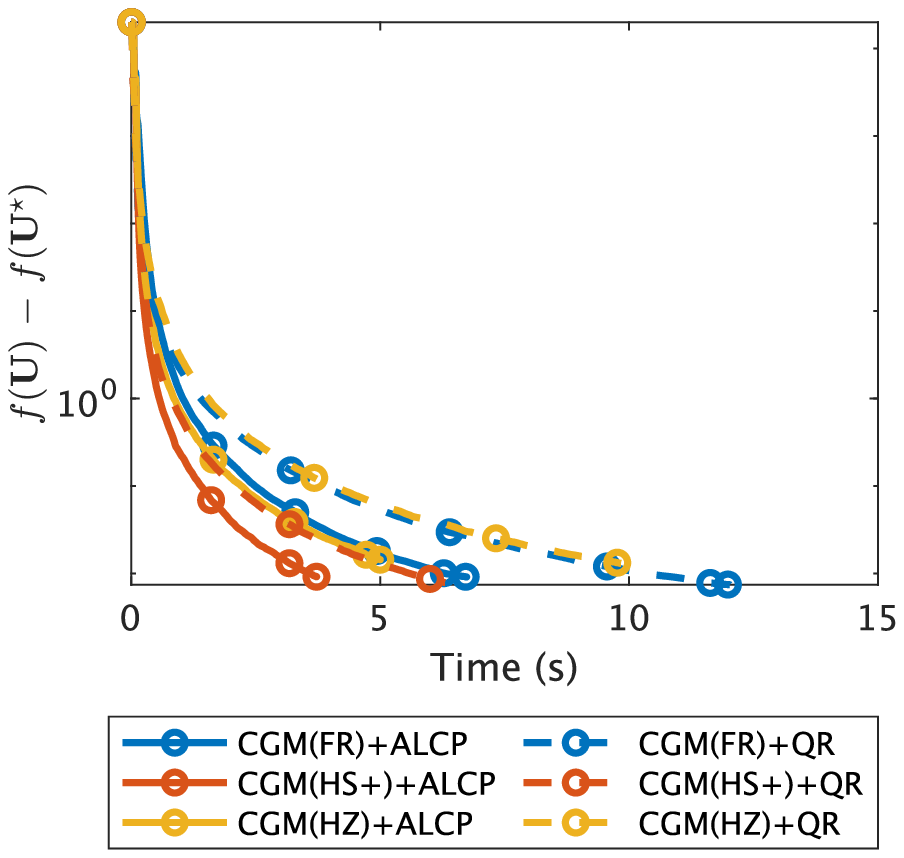}
	} \\
	\subfloat[][$N=2000,p=1$]{
		\includegraphics[clip, width=0.4\columnwidth]{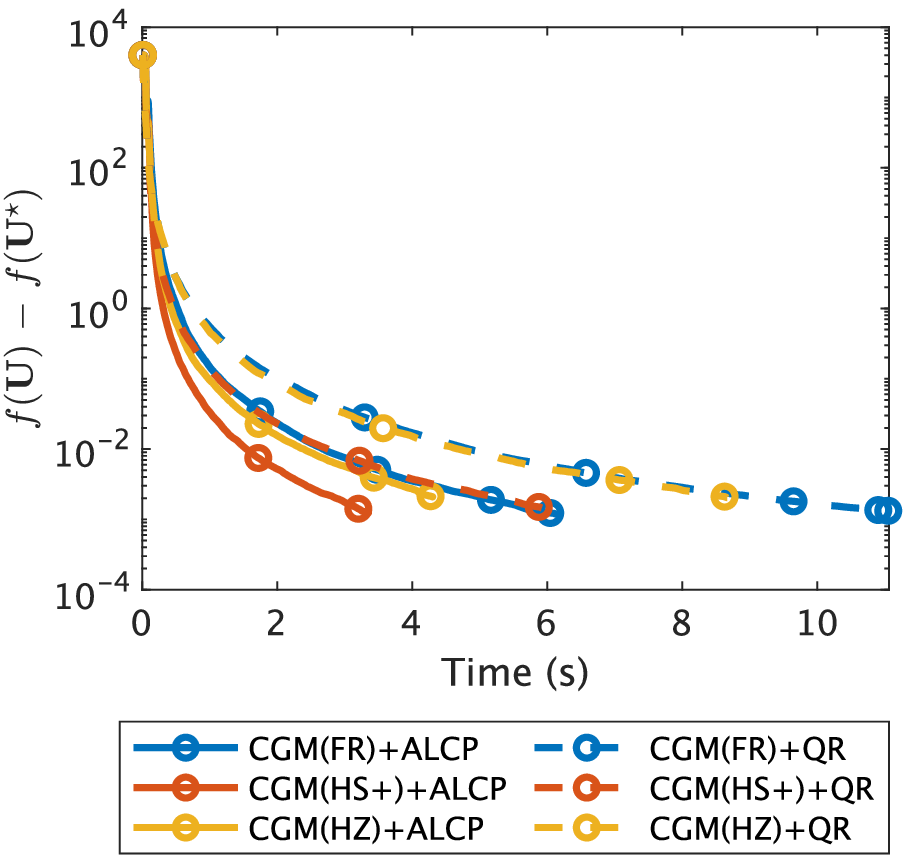}
	}
	\subfloat[][$N=2000,p=10$]{
		\includegraphics[clip, width=0.4\columnwidth]{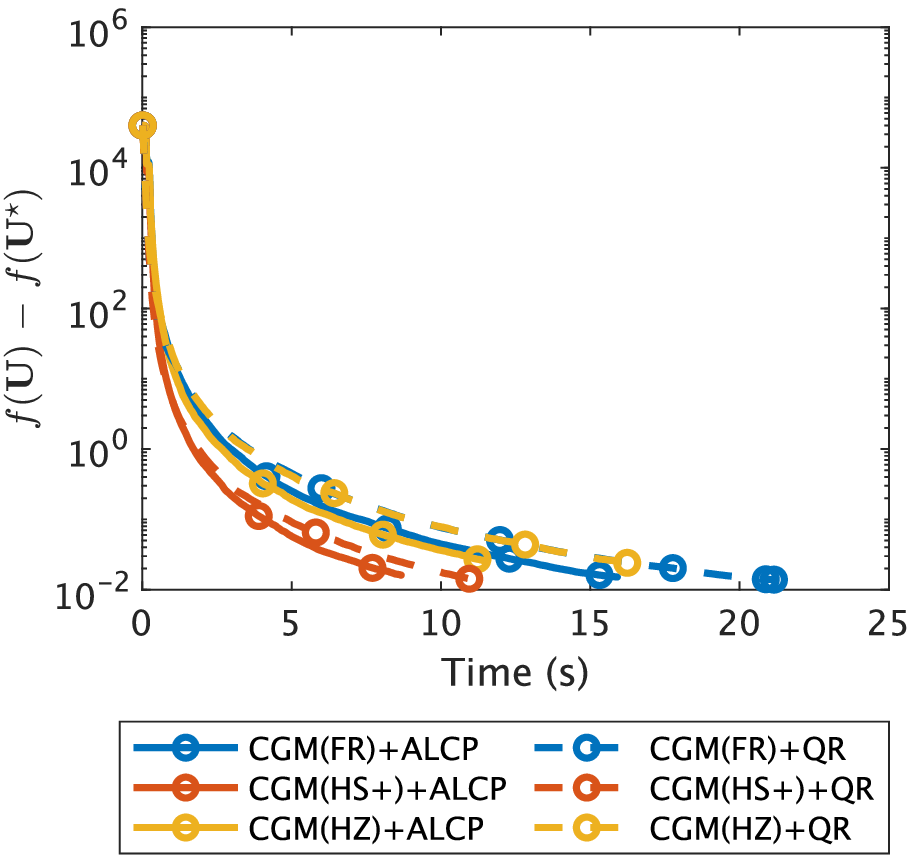}
	}
	\caption{Convergence histories of each algorithm applied to the problem~\eqref{eq:procrustes} regarding the value $f(\bm{U})-f(\bm{U}^{\star})$ at CPU time for each problem size.
		Markers are put at every 250 iterations.}
	\label{fig:procrustes}
\end{figure}

From Tables~\ref{table:eigenbasis} and~\ref{table:procrustes}, although the number of evaluations of
$f$
per iteration was the same level between CGM+ALCP and CGM+QR, CGM+QR needed more CPU time per iteration than CGM+ALCP.
Indeed, for each type of CGM, the average ratios of nfe/itr for CGM+ALCP and CGM+QR were between about 1.5 and 1.8, however, the average ratios of time/itr for CGM+QR were about 1.5-6 times higher than those for CGM+ALCP.
Since the computational complexity for the QR decomposition-based retraction and
$\Phi_{\bm{S}}^{-1}$
with
$\bm{S} \in {\rm O}_{p}(N)$
is
$2Np^{2} + \mathfrak{o}(p^{3})$
flops~\cite{Zhu17,Kume-Yamada22}, this difference regarding CPU time per iteration can be caused by computations of a vector transport in QR+CGM.
Moreover, for the ALCP strategy, since the average number of changing center points is small, e.g., 2.84 at most, we can see that the ALCP strategy rarely changes center points practically.
Combined with these observations for the ALCP strategy, the alarming condition~\eqref{eq:change} employed in line~\ref{lst:line:if} of Algorithm~\ref{alg:ALCP} can (i) detect the risk of singular-point issue; (ii) enjoy sufficiently potential of each Euclidean space
$Q_{N,p}(\bm{S}_{[l]})$
where the CGM is executed for minimization of
$f\circ \Phi_{\bm{S}_{[l]}}^{-1}$.

To compare convergence behaviors in detail, Figures~\ref{fig:eigenbasis} and~\ref{fig:procrustes} demonstrate the convergence histories of algorithms.
The plots show CPU time on the horizontal axis versus the value
$f(\bm{U})-f(\bm{U}^{\star})$
on the vertical axis.
From Figures~\ref{fig:eigenbasis} and~\ref{fig:procrustes}, we observed that ALCP+CGM(HS+) outperformed the others for all problems.

\section{Conclusion}
For optimization over the Stiefel manifold, we presented an adaptive reformulation strategy
by translating the original problem into optimization over a Euclidean space with the generalized Cayley transform.
The adaptive reformulation strategy can avoid a performance degradation appeared in the naive reformulation strategy caused by the singular-point of the generalized Cayley transform.
We also presented a unified convergence analysis for the proposed strategy when we use a fairly standard class of Euclidean optimization algorithms, e.g., the conjugate gradient method, and the quasi-Newton method.
Numerical experiments demonstrate that the proposed algorithms outperformed the standard algorithms designed with a retraction on the Stiefel manifold.

\section*{Funding}
This work was supported by JSPS Grants-in-Aid (19H04134) partially, by JSPS Grants-in-Aid (21J21353) and by JST SICORP (JPMJSC20C6).

\bibliographystyle{unsrt}
\bibliography{main}
\appendix

\section{Gradient of function after the Cayley parametrization} \label{appendix:gradient}
The gradient of
$f\circ\Phi_{\bm{S}}^{-1}$
is explicitly given by the following Fact~\ref{fact:gradient}.
Fact~\ref{fact:Lipschitz} below presents a sufficient condition for the Lipschitz continuity of
$\nabla (f\circ\Phi_{\bm{S}}^{-1})$.
\begin{fact}[Gradient of function after the Cayley parametrization\cite{Kume-Yamada22}] \label{fact:gradient}
	For a differentiable function
	$f:\mathbb{R}^{N\times p}\to \mathbb{R}$
	and
	$\bm{S} \in {\rm O}(N)$,
	the function
	$f_{\bm{S}}:=f\circ\Phi_{\bm{S}}^{-1}:Q_{N,p}(\bm{S}) \to \mathbb{R}$
	is differentiable with
	\begin{align}
		(\bm{V}\in Q_{N,p}(\bm{S})) \quad \nabla f_{\bm{S}}(\bm{V}) = 2\Skew(\bm{W}^{f}_{\bm{S}}(\bm{V})) = \bm{W}^{f}_{\bm{S}}(\bm{V}) - \bm{W}^{f}_{\bm{S}}(\bm{V})^{\TT}\in Q_{N,p}(\bm{S}), \label{eq:grad_propo}
	\end{align}
	where
	\begin{equation}
		\bm{W}^{f}_{\bm{S}}(\bm{V})
		:= \begin{bmatrix}
			\dbra{\overline{\bm{W}}_{\bm{S}}^{f}(\bm{V})}_{11} & \dbra{\overline{\bm{W}}_{\bm{S}}^{f}(\bm{V})}_{12} \\ 
			\dbra{\overline{\bm{W}}_{\bm{S}}^{f}(\bm{V})}_{21} & \bm{0}
		\end{bmatrix} \in \mathbb{R}^{N\times N} \label{eq:matrix_W}
	\end{equation}
	and
	{
	\thickmuskip=0.0\thickmuskip
	\medmuskip=0.0\medmuskip
	\thinmuskip=0.0\thinmuskip
	\begin{align}
		& \overline{\bm{W}}^{f}_{\bm{S}}(\bm{V})
		:= (\bm{I}+\bm{V})^{-1}\bm{I}_{N\times p}\nabla f(\Phi_{\bm{S}}^{-1}(\bm{V}))^{\TT}\bm{S}(\bm{I}+\bm{V})^{-1} \label{eq:matrix_W_bar} \\
		= & \begin{bmatrix}
			\bm{M}^{-1}\nabla f(\bm{U})^{\TT}(\bm{S}_{\rm le} -\bm{S}_{\rm ri}\dbra{\bm{V}}_{21})\bm{M}^{-1}
			&
			\bm{M}^{-1}
			\nabla f(\bm{U})^{\TT}
			((\bm{S}_{\rm le} -\bm{S}_{\rm ri}\dbra{\bm{V}}_{21})\bm{M}^{-1}\dbra{\bm{V}}_{21}^{\TT} +\bm{S}_{\rm ri})
			\\
			-\dbra{\bm{V}}_{21}\bm{M}^{-1}\nabla f(\bm{U})^{\TT}(\bm{S}_{\rm le} -\bm{S}_{\rm ri}\dbra{\bm{V}}_{21})\bm{M}^{-1}
			&
			-\dbra{\bm{V}}_{21}\bm{M}^{-1}
			\nabla f(\bm{U})^{\TT}
			((\bm{S}_{\rm le} -\bm{S}_{\rm ri}\dbra{\bm{V}}_{21})\bm{M}^{-1}\dbra{\bm{V}}_{21}^{\TT} +\bm{S}_{\rm ri})
		\end{bmatrix}
	\end{align}
}%
	in terms of
	$\bm{U}:=\Phi_{\bm{S}}^{-1}(\bm{V}) \in \St(p,N)$
	and
	$\bm{M}:=\bm{I}_{p}+\dbra{\bm{V}}_{11}+\dbra{\bm{V}}_{21}^{\TT}\dbra{\bm{V}}_{21}\in \mathbb{R}^{p\times p}$.
\end{fact}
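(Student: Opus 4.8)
The plan is to compute the G\^{a}teaux derivative of $f_{\bm{S}}=f\circ\Phi_{\bm{S}}^{-1}$ by the chain rule and then identify the gradient as the orthogonal projection, onto the subspace $Q_{N,p}(\bm{S})$, of a Euclidean-type expression built from $\nabla f$. First I would differentiate the inner map. Writing $\Phi_{\bm{S}}^{-1}(\bm{V})=\bm{S}\varphi^{-1}(\bm{V})\bm{I}_{N\times p}$ with $\varphi^{-1}(\bm{V})=(\bm{I}-\bm{V})(\bm{I}+\bm{V})^{-1}$ (see~\eqref{eq:Cayley_inv_origin} and~\eqref{eq:ICT_O}), the product rule together with the derivative of the matrix inverse, namely $-(\bm{I}+\bm{V})^{-1}\bm{\mathcal{E}}(\bm{I}+\bm{V})^{-1}$, and the identity $\bm{I}+(\bm{I}-\bm{V})(\bm{I}+\bm{V})^{-1}=2(\bm{I}+\bm{V})^{-1}$ give, for every $\bm{\mathcal{E}}\in Q_{N,p}(\bm{S})$,
\begin{equation}
	\mathrm{D}\Phi_{\bm{S}}^{-1}(\bm{V})[\bm{\mathcal{E}}]=-2\bm{S}(\bm{I}+\bm{V})^{-1}\bm{\mathcal{E}}(\bm{I}+\bm{V})^{-1}\bm{I}_{N\times p}.
\end{equation}
Since $f_{\bm{S}}$ is a composition of differentiable maps it is differentiable, and using $\mathrm{D}f(\bm{U})[\bm{\mathcal{D}}]=\trace(\nabla f(\bm{U})^{\TT}\bm{\mathcal{D}})$ with $\bm{U}:=\Phi_{\bm{S}}^{-1}(\bm{V})$, the cyclic invariance of the trace yields
\begin{equation}
	\mathrm{D}f_{\bm{S}}(\bm{V})[\bm{\mathcal{E}}]=-2\trace\!\left(\overline{\bm{W}}^{f}_{\bm{S}}(\bm{V})\,\bm{\mathcal{E}}\right),
\end{equation}
with $\overline{\bm{W}}^{f}_{\bm{S}}(\bm{V})$ exactly as in~\eqref{eq:matrix_W_bar}.

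The main step is to pass from this linear functional to the gradient living in $Q_{N,p}(\bm{S})$. Since every $\bm{\mathcal{E}}\in Q_{N,p}(\bm{S})$ is skew-symmetric, the symmetric part of $\overline{\bm{W}}^{f}_{\bm{S}}(\bm{V})$ contributes nothing to $\trace(\overline{\bm{W}}^{f}_{\bm{S}}(\bm{V})\bm{\mathcal{E}})$, so I may replace $\overline{\bm{W}}^{f}_{\bm{S}}(\bm{V})$ by its skew part; combining this with $\trace(\bm{X}\bm{\mathcal{E}})=-\inprod{\bm{X}}{\bm{\mathcal{E}}}$ for skew-symmetric $\bm{X}$ recasts the derivative as $\mathrm{D}f_{\bm{S}}(\bm{V})[\bm{\mathcal{E}}]=\inprod{2\Skew(\overline{\bm{W}}^{f}_{\bm{S}}(\bm{V}))}{\bm{\mathcal{E}}}$ for all $\bm{\mathcal{E}}\in Q_{N,p}(\bm{S})$. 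By definition of the gradient, $\nabla f_{\bm{S}}(\bm{V})$ is then the orthogonal projection of $2\Skew(\overline{\bm{W}}^{f}_{\bm{S}}(\bm{V}))$ onto $Q_{N,p}(\bm{S})$.

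The delicate point, which I expect to be the only real obstacle, is precisely this projection: $2\Skew(\overline{\bm{W}}^{f}_{\bm{S}}(\bm{V}))$ is skew-symmetric and hence lies in $Q_{N,N}$, but not in general in the smaller space $Q_{N,p}(\bm{S})$ carved out by~\eqref{eq:skew}. I would verify that the orthogonal complement of $Q_{N,p}(\bm{S})$ inside $Q_{N,N}$ consists exactly of the skew-symmetric matrices supported on the lower-right $(N-p)\times(N-p)$ block (a short trace computation shows such matrices are orthogonal to every element of $Q_{N,p}(\bm{S})$), so the projection merely annihilates the $\dbra{\cdot}_{22}$ block while leaving the other three blocks untouched. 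Annihilating the $\dbra{\cdot}_{22}$ block of $\overline{\bm{W}}^{f}_{\bm{S}}(\bm{V})-\overline{\bm{W}}^{f}_{\bm{S}}(\bm{V})^{\TT}$ is identical to first zeroing the $(2,2)$ block of $\overline{\bm{W}}^{f}_{\bm{S}}(\bm{V})$, i.e.\ replacing it by $\bm{W}^{f}_{\bm{S}}(\bm{V})$ of~\eqref{eq:matrix_W}, and then skew-symmetrizing, since this operation does not alter the $\dbra{\cdot}_{11}$, $\dbra{\cdot}_{12}$, $\dbra{\cdot}_{21}$ blocks of the skew-symmetrized matrix. This establishes $\nabla f_{\bm{S}}(\bm{V})=2\Skew(\bm{W}^{f}_{\bm{S}}(\bm{V}))=\bm{W}^{f}_{\bm{S}}(\bm{V})-\bm{W}^{f}_{\bm{S}}(\bm{V})^{\TT}$, and the resulting matrix automatically has a skew $\dbra{\cdot}_{11}$ block, a zero $\dbra{\cdot}_{22}$ block, and mutually transpose-negated off-diagonal blocks, so it indeed belongs to $Q_{N,p}(\bm{S})$.

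Finally, to obtain the explicit block expression displayed after~\eqref{eq:matrix_W_bar}, I would insert the Schur-complement block inverse of
\begin{equation}
	\bm{I}+\bm{V}=\begin{bmatrix} \bm{I}_{p}+\dbra{\bm{V}}_{11} & -\dbra{\bm{V}}_{21}^{\TT} \\ \dbra{\bm{V}}_{21} & \bm{I}_{N-p}\end{bmatrix},
\end{equation}
whose $(2,2)$-Schur complement is $\bm{M}=\bm{I}_{p}+\dbra{\bm{V}}_{11}+\dbra{\bm{V}}_{21}^{\TT}\dbra{\bm{V}}_{21}$. Reading off
$(\bm{I}+\bm{V})^{-1}\bm{I}_{N\times p}=\begin{bmatrix}\bm{M}^{-1}\\ -\dbra{\bm{V}}_{21}\bm{M}^{-1}\end{bmatrix}$
and computing $\bm{S}(\bm{I}+\bm{V})^{-1}$ (whose two column blocks are $(\bm{S}_{\rm le}-\bm{S}_{\rm ri}\dbra{\bm{V}}_{21})\bm{M}^{-1}$ and $(\bm{S}_{\rm le}-\bm{S}_{\rm ri}\dbra{\bm{V}}_{21})\bm{M}^{-1}\dbra{\bm{V}}_{21}^{\TT}+\bm{S}_{\rm ri}$), the four products $\dbra{\overline{\bm{W}}^{f}_{\bm{S}}(\bm{V})}_{ij}$ in the definition of $\overline{\bm{W}}^{f}_{\bm{S}}(\bm{V})=(\bm{I}+\bm{V})^{-1}\bm{I}_{N\times p}\nabla f(\bm{U})^{\TT}\bm{S}(\bm{I}+\bm{V})^{-1}$ reproduce the four displayed blocks. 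Both matrix-calculus computations are routine once the inverse-derivative and Schur-complement identities are in hand, so the substance of the proof lies entirely in the projection argument above.
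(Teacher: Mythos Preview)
Your derivation is correct: the differential of $\varphi^{-1}$, the chain rule, the identification of the gradient via the trace pairing, the projection from $Q_{N,N}$ onto $Q_{N,p}(\bm{S})$ by annihilating the $(2,2)$ block, and the Schur-complement block computation all check out. Note, however, that the paper does not supply its own proof of this statement; Fact~\ref{fact:gradient} is quoted from~\cite{Kume-Yamada22} and is simply stated in the appendix. So there is no proof in the present paper to compare against, but what you have written is a complete and standard verification of the formula.
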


\begin{fact}[\cite{Kume-Yamada22}] \label{fact:Lipschitz}
	Let
	$f:\mathbb{R}^{N\times p}\to\mathbb{R}$
	be continuously differentiable.
	If it holds that
	\begin{equation}
		(\exists L> 0, \forall \bm{U}_1,\bm{U}_2 \in \St(p,N)) \quad \|\nabla f(\bm{U}_1)-\nabla f(\bm{U}_2)\|_{F} \leq L \|\bm{U}_1 - \bm{U}_2\|_{F}
	\end{equation}
	and
	$\mu \geq \max_{\bm{U}\in \St(p,N)} \|\nabla f(\bm{U})\|_{2}$,
	then we have
	\begin{equation}
		(\forall \bm{S} \in {\rm O}(N),\forall \bm{V}_1,\bm{V}_2 \in Q_{N,p}(\bm{S})) \quad\| \nabla f_{\bm{S}}(\bm{V}_1) - \nabla f_{\bm{S}}(\bm{V}_2) \|_{F}
		\leq 4(\mu + L) \|\bm{V}_1-\bm{V}_2\|_{F}. \label{eq:prop:gradient_Lipschitz}
	\end{equation}
\end{fact}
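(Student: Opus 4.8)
The plan is to reduce the claim to an elementary Lipschitz estimate for the matrix-valued map $\overline{\bm{W}}^f_{\bm{S}}$ appearing in Fact~\ref{fact:gradient} and then to establish that estimate by a standard telescoping argument. First I would invoke Fact~\ref{fact:gradient} to write $\nabla f_{\bm{S}}(\bm{V}) = \bm{W}^f_{\bm{S}}(\bm{V}) - \bm{W}^f_{\bm{S}}(\bm{V})^{\TT}$, where $\bm{W}^f_{\bm{S}}(\bm{V})$ is obtained from
\begin{equation}
	\overline{\bm{W}}^f_{\bm{S}}(\bm{V}) = (\bm{I}+\bm{V})^{-1}\bm{I}_{N\times p}\nabla f(\Phi_{\bm{S}}^{-1}(\bm{V}))^{\TT}\bm{S}(\bm{I}+\bm{V})^{-1}
\end{equation}
by replacing its lower-right $(N-p)\times(N-p)$ block with $\bm{0}$ (see~\eqref{eq:matrix_W}). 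Because this block-keeping/zeroing operation is linear in its matrix argument and never increases the Frobenius norm, I would get
\begin{equation}
	\|\nabla f_{\bm{S}}(\bm{V}_1) - \nabla f_{\bm{S}}(\bm{V}_2)\|_F \le 2\|\bm{W}^f_{\bm{S}}(\bm{V}_1) - \bm{W}^f_{\bm{S}}(\bm{V}_2)\|_F \le 2\|\overline{\bm{W}}^f_{\bm{S}}(\bm{V}_1) - \overline{\bm{W}}^f_{\bm{S}}(\bm{V}_2)\|_F,
\end{equation}
so it suffices to prove that $\overline{\bm{W}}^f_{\bm{S}}$ is $2(\mu+L)$-Lipschitz on $Q_{N,p}(\bm{S})$.

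Next I would collect the global operator-norm bounds that drive the telescoping. Since every $\bm{V} \in Q_{N,p}(\bm{S})$ is skew-symmetric, $(\bm{I}+\bm{V})^{-\TT}(\bm{I}+\bm{V})^{-1} = (\bm{I}-\bm{V}^2)^{-1}$, and as $-\bm{V}^2 = \bm{V}^{\TT}\bm{V}$ is positive semidefinite the eigenvalues of $\bm{I}-\bm{V}^2$ are all at least $1$; hence $\|(\bm{I}+\bm{V})^{-1}\|_2 \le 1$. Together with $\|\bm{S}\|_2 = \|\bm{I}_{N\times p}\|_2 = 1$ and $\|\nabla f(\Phi_{\bm{S}}^{-1}(\bm{V}))\|_2 \le \mu$ (valid because $\Phi_{\bm{S}}^{-1}(\bm{V}) \in \St(p,N)$), this controls every factor of $\overline{\bm{W}}^f_{\bm{S}}$ uniformly in $\bm{V}$. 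For the dependence on $\bm{V}$ I would use the resolvent identity $(\bm{I}+\bm{V}_1)^{-1} - (\bm{I}+\bm{V}_2)^{-1} = (\bm{I}+\bm{V}_1)^{-1}(\bm{V}_2-\bm{V}_1)(\bm{I}+\bm{V}_2)^{-1}$, which with the bound above gives $\|(\bm{I}+\bm{V}_1)^{-1} - (\bm{I}+\bm{V}_2)^{-1}\|_F \le \|\bm{V}_1-\bm{V}_2\|_F$; combining this with the identity $(\bm{I}-\bm{V})(\bm{I}+\bm{V})^{-1} = 2(\bm{I}+\bm{V})^{-1} - \bm{I}$ and the closed form of $\Phi_{\bm{S}}^{-1}$ in~\eqref{eq:Cayley_inv_origin} yields the global Lipschitz estimate $\|\Phi_{\bm{S}}^{-1}(\bm{V}_1) - \Phi_{\bm{S}}^{-1}(\bm{V}_2)\|_F \le 2\|\bm{V}_1-\bm{V}_2\|_F$, so the hypothesized Lipschitz continuity of $\nabla f$ on $\St(p,N)$ upgrades to $\|\nabla f(\Phi_{\bm{S}}^{-1}(\bm{V}_1)) - \nabla f(\Phi_{\bm{S}}^{-1}(\bm{V}_2))\|_F \le 2L\|\bm{V}_1-\bm{V}_2\|_F$.

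Finally I would write $\overline{\bm{W}}^f_{\bm{S}}(\bm{V}_1) - \overline{\bm{W}}^f_{\bm{S}}(\bm{V}_2)$ as a telescoping sum of three terms, perturbing one factor at a time among the leading $(\bm{I}+\bm{V})^{-1}$, the gradient factor $\nabla f(\Phi_{\bm{S}}^{-1}(\bm{V}))$, and the trailing $(\bm{I}+\bm{V})^{-1}$, and bound each term by submultiplicativity of the Frobenius and spectral norms using the estimates just obtained: the two resolvent-perturbation terms contribute at most $\mu\|\bm{V}_1-\bm{V}_2\|_F$ each, and the gradient-perturbation term at most $2L\|\bm{V}_1-\bm{V}_2\|_F$, for a total of $2(\mu+L)\|\bm{V}_1-\bm{V}_2\|_F$; the factor $2$ from the reduction step then gives the asserted constant $4(\mu+L)$. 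I expect the only real obstacle to be the global control of $(\bm{I}+\bm{V})^{-1}$, and hence of $\Phi_{\bm{S}}^{-1}$, which is exactly where the skew-symmetry of $Q_{N,p}(\bm{S})$ is indispensable — it is the structural reason (compare Remark~\ref{remark:difficulty_trivialization}(b)) why this Lipschitz bound holds on all of $Q_{N,p}(\bm{S})$ rather than only near $\bm{0}$, unlike for a generic retraction; the remaining bookkeeping of constants is routine.
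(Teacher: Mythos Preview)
The paper does not prove this statement: Fact~\ref{fact:Lipschitz} is stated in the appendix with a citation to~\cite{Kume-Yamada22} and no argument is given here. Your proposal is a correct self-contained proof. The reduction $\|\nabla f_{\bm{S}}(\bm{V}_1)-\nabla f_{\bm{S}}(\bm{V}_2)\|_F \le 2\|\overline{\bm{W}}^f_{\bm{S}}(\bm{V}_1)-\overline{\bm{W}}^f_{\bm{S}}(\bm{V}_2)\|_F$ is valid since block-truncation is a Frobenius contraction; the key uniform bound $\|(\bm{I}+\bm{V})^{-1}\|_2\le 1$ follows exactly from skew-symmetry as you argue; the identity $(\bm{I}-\bm{V})(\bm{I}+\bm{V})^{-1}=2(\bm{I}+\bm{V})^{-1}-\bm{I}$ together with the resolvent identity gives the $2$-Lipschitz bound for $\Phi_{\bm{S}}^{-1}$; and the three-term telescoping yields $\mu+2L+\mu=2(\mu+L)$, hence $4(\mu+L)$ after the factor~$2$. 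There is nothing to compare against in this paper, but your argument matches the structure one would expect from the formula in Fact~\ref{fact:gradient} and the constants work out exactly.
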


\end{document}